\documentclass[11pts]{amsart}
\usepackage{amssymb}
\usepackage{epsfig}
\usepackage{amscd}
\usepackage{graphicx}
\newtheorem{theorem}{Theorem}[section]
\newtheorem{prop}[theorem]{Proposition}
\newtheorem{cor}[theorem]{Corollary}
\newtheorem{lem}[theorem]{Lemma}
\newtheorem{rem}[theorem]{Remark}
\newtheorem{defn}[theorem]{Definition}
\newtheorem{example}[theorem]{Example}

\def\ra{\rightarrow}
\def\o{\otimes}
\def\b{\beta}
\def\s{\sigma}
\def\a{\alpha}
\def\mc{\mathcal}
\def\lan{\langle}
\def\ran{\rangle}

\def\l{\lambda}
\def\e{\epsilon}
\def\ve{\varepsilon}
\begin{document}
\title{Representation theory for the Kri\v{z} model }
\thanks{2010 AMS Classification Primary: 55R80, 20C30; Secondary: 55P62, 13A50.\\
\indent This research is partially supported by Higher Education Commission, Pakistan.}
\author{Samia Ashraf$^{1}$, Haniya Azam$^{2}$, Barbu Berceanu$^{3}$}
\address{$^{1}$Abdus Salam School of Mathematical Sciences,
 GC University, Lahore-Pakistan.}
\email {samia.ashraf@yahoo.com}
\address{$^{2}$Abdus Salam School of Mathematical Sciences,
 GC University, Lahore-Pakistan.}
\email {centipedes.united@gmail.com}
\address{$^{3}$Abdus Salam School of Mathematical Sciences,
 GC University, Lahore-Pakistan, and
 Institute of Mathematics Simion Stoilow, Bucharest-Romania (Permanent address).}
\email {Barbu.Berceanu@imar.ro}

\keywords{Representations of symmetric groups, configuration spaces, rational model}
\maketitle

\maketitle
\pagestyle{myheadings} \markboth{\centerline {\scriptsize
SAMIA ASHRAF,\,\,\,HANIYA AZAM ,\,\,BARBU BERCEANU}}{\centerline {\scriptsize
Representation theory for the Kri\v{z} model}}
\maketitle
\begin{abstract} The natural action of the symmetric group on the configuration spaces $F(X,n)$ induces an action on the Kri\v{z}
model $E(X,n)$. The representation theory of this DGA is studied and a big acyclic subcomplex which is $\mc{S}_n$-invariant is described.
\end{abstract}

\textwidth=13cm
\pagestyle{myheadings}

\section{Introduction}

The ordered configuration space of $n$ points $F(X,n)$ of a topological space $X$ is defined as
$$F(X,n)=\{(x_1,x_2,\ldots,x_n)\in X^n \mid x_i\neq x_j \mbox{ for } i\neq j\}.$$
For $X$ a smooth complex projective variety, I. Kri\v{z} \cite{K} constructed a rational model $E(X,n)$ for $F(X,n)$, a simplified version of the Fulton-MacPherson model \cite{FM}. Recently Lambrechts and Stanley \cite{LaSt} constructed a (quasi)-model for the configuration space of a topological space with Poincar\'{e}
duality cohomology; if such a space is formal, the model of Lambrechts-Stanley is reduced to the Kri\v{z} model and this is the case
of K\"{a}hler manifolds, see \cite{DGMS}. Therefore all the results of this paper could be applied to (simply connected) formal closed
manifolds (with few changes for the odd-dimensional manifolds).

Let us remind the construction of Kri\v{z}. We denote by $p_i^*:H^*(X)\ra H^*(X^n)$ and $p_{ij}^*:H^*(X^2)\ra H^*(X^n)$ (for $i\neq j$) the pullbacks of the obvious projections and by $m$ the complex dimension of $X$ (for cohomology groups we use rational or complex coefficients). The model $E(X,n)$ is defined as follows: as an algebra $E(X,n)$ is isomorphic to the exterior algebra with generators $G_{ij},\,1\leq i,j\leq n$ (of degree $2m-1$) and coefficients in $H^*(X)^{\o n}$ modulo the relations\\
$$\begin{array}{rcll}
  G_{ji}         & = & G_{ij}                    &  \\
  p_j^*(x)G_{ij} & = & p_i^*(x)G_{ij},            & (i< j),\, x\in H^*(X) \\
  G_{ik}G_{jk}   & = & G_{ij}G_{jk}-G_{ij}G_{ik}, & (i<j<k).
\end{array}$$
The differential $d$ is given by $d|_{H^*(X)^{\o n}}=0$ and $d(G_{ij})=p_{ij}^*(\Delta)$, where $\Delta= w\o1+\ldots+1\o w\in H^*(X)\o H^*(X)$ denotes the class of the diagonal and $w\in H^{2m}(X)$ is the fundamental class.

This model is a differential bigraded algebra $E(X,n)=\mathop{\bigoplus}\limits_{k,q}E_q^k(X,n)$: the lower degree $q$ (called the exterior degree) is given by the number of exterior generators $G_{ij}$, and the upper degree $k$ is given by the total degree:
the multiplication is homogeneous $$E_q^k\o E_{q'}^{k'}\ra E_{q+q'}^{k+k'}$$ and the differential has bidegree ${+1}\choose{-1}$
$$d:E_q^k\ra E_{q-1}^{k+1}.$$
In the next definition $G_{I_*J_*}$ is a product of exterior generators $$G_{I_*J_*}=G_{i_1j_1}G_{i_2j_2}\ldots G_{i_qj_q}.$$
\begin{defn}{(Fulton-MacPherson \cite{FM}, Kri\v{z} \cite{K})}
The symmetric group $\mc{S}_n$ acts on $E(X,n)$ by permuting the factors in $H^*(X^n)=H^{*\o n}$ and changing the indices of the exterior generators: for an arbitrary permutation  $\s\in \mc{S}_n$ $$\s( p_1^*(x_{h_1})\dots  p_n^*(x_{h_n})G_{I_*J_*})= p_{\s(1)}^*(x_{h_1})\dots  p_{\s(n)}^*(x_{h_n})G_{\s(I_*)\s(J_*)}.$$
\end{defn}
The action of $\mc{S}_n$ is well defined because the set of relations is invariant under this action.

In the next diagram the nonzero bigraded components $E_q^k(X,n)$ lie in the trapezoid with
vertices $(0,0)$, $(2mn,0)$, $((n-1)(2m-1),n-1)$ and $(n(2m-1)+1, n-1).$ \\
\begin{picture}(80,100)
\put(40,0){ \put(0,10){\vector(1,0){230}} \put(10,0){\vector(0,1){80}}   \put(9.5,9){\circle{4}}
\thicklines  \put(10,10){\line(2,1){70}}  \put(80.5,45){\line(1,0){89}}  \put(170.1,45){\line(1,-1){35.15}}
\begin{scriptsize}
\put(45,0){$(n-1)(2m-1)$}                  \put(142,0){$n(2m-1)+1$}  \put(205,0){$2nm$}  \put(-18,43){$n-1$}
\end{scriptsize}
\thinlines                                  \multiput(80,45)(0,-10){4}{\line(0,-1){5}}    \multiput(170,45)(0,-10){4}{\line(0,-1){5}}
\multiput(80,45)(-9,0){8}{\line(-1,0){5}}   \multiput(170,38)(-5,0){3}{\line(1,-1){28}}   \multiput(150,32)(-5,0){3}{\line(1,-1){22}}
\multiput(70,25)(-5,0){3}{\line(1,-1){15}}  \multiput(50,20)(-5,0){3}{\line(1,-1){10}}
\multiput(77,42)(91,0){2}{$\bullet$}        \put(204,7){$\bullet$}               \put(100,30){$E_q^k$}
\multiput(115,0)(7,21){4}{\line(1,3){5}}    \multiput(90,15)(40,0){2}{$\ldots$}           \put(235,0){$k=\mbox{total\,\,degree}$}
\put(0,85){$q=\mbox{exterior\,\,degree}$}}

\end{picture}
\bigskip

Along the horizontal lines we describe a duality preserving the symmetric action: the dotted median line of the trapezoid is
the axis of symmetry for this duality, see Proposition \ref{prop2.2} in Section \ref{section2}. In the same section we introduce
the combinatorial ``types" of monomials of $E_*^*(X,n)$: these are parameterized by forests in which every tree contains a cohomology class of $X$.
The type decomposition of the bigraded components gives a direct sum of $\mc{S}_n$ submodules, each of these being generated by a unique
element: see Theorem \ref{thm2.8}. In the next section we describe the $\mc{S}_n$ structure of types: explicit decomposition into
irreducible representations in many particular cases and, using the results of Lehrer-Solomon \cite{LeSo}, we compute the character for
the general type. See the Propositions \ref{prop3.1}, \ref{prop3.2}, \ref{prop3.3}, \ref{prop3.4} and Theorem \ref{thm3.15}.

In Section \ref{section4} we present some properties of the differential which are consequences of its $\mc{S}_n$-equivariance. For all complex
projective manifolds, except the projective line, we show that the differential is injective on the ``left side" of the trapezoid:
\begin{prop}\label{prop4}
The differentials in the Kri\v{z} model of a projective manifold different from $\mathbb{C}P^1$ are injective for any $q$ in the interval $[1,n-1]:$
$$d:E_q^{q(2m-1)}(X,n)\rightarrowtail E_{q-1}^{q(2m-1)+1}(X,n).$$
\end{prop}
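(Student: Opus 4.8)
The plan is to make both spaces fully explicit. A monomial of $E(X,n)$ of exterior degree $q$ and total degree $q(2m-1)$ must have a scalar coefficient, so $E_q^{q(2m-1)}(X,n)$ is the span of the pure products $G_{I_*J_*}=G_{i_1j_1}\cdots G_{i_qj_q}$; modulo the symmetry and Fulton--MacPherson relations this is the degree-$q$ piece of (a regrading of) the Orlik--Solomon algebra of the braid arrangement, with the admissible monomials --- those with $j_1<\cdots<j_q$ and $i_l<j_l$ --- as a basis, each one recording a forest on $\{1,\dots,n\}$ (a forest, because the heads $j_l$ are distinct). On the target one uses the type basis of Theorem \ref{thm2.8}: here the coefficient has total degree $2m$, so a basis monomial is a forest with $q-1$ edges decorated by cohomology classes of $X$ of total degree $2m$, one per tree, and using the relation $p_j^*(x)G_{ij}=p_i^*(x)G_{ij}$ we normalise each class to sit at the smallest vertex of its tree. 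Expanding the diagonal over a homogeneous basis, $\d=1\o w+w\o 1+\sum_\a\pm\,e_\a\o e_\a^\vee$ with the middle sum over $0<|e_\a|<2m$; this middle sum is non-empty precisely because $X\neq\mathbb{C}P^1$, since a positive-dimensional smooth projective manifold has $H^2\neq 0$, and that class sits strictly between degrees $0$ and $2m$ unless $m=1$, in which case $H^1\neq 0$ unless the curve is $\mathbb{C}P^1$.

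By the Leibniz rule,
$$d(G_{I_*J_*})=\sum_{l=1}^{q}(-1)^{l-1}\,p_{i_lj_l}^*(\d)\,G_{i_1j_1}\cdots\widehat{G_{i_lj_l}}\cdots G_{i_qj_q},$$
and I would retain only the ``weight two'' part $d_2$, obtained by replacing each $p_{i_lj_l}^*(\d)$ by its middle summands $\sum_\a\pm\,p_{i_l}^*(e_\a)p_{j_l}^*(e_\a^\vee)$. The key observation is that deleting an edge from the forest of an admissible monomial splits exactly one tree into two components, with the endpoints $i_l$ and $j_l$ in different components; hence the classes placed on $i_l$ and on $j_l$ cannot be slid onto a common vertex, so the corresponding term does not collapse to a weight-one term $p_v^*(e_\a e_\a^\vee)=p_v^*(\pm w)$. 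Thus $d_2$ lands in the span of the genuine weight-two type basis elements and it suffices to prove $d_2$ injective. For $q=1$ this is immediate: $d_2(G_{ij})=\sum_\a\pm\,p_i^*(e_\a)p_j^*(e_\a^\vee)$ is a non-empty sum of K\"{u}nneth basis elements of $H^{2m}(X^n)$ which are distinct for distinct pairs $\{i,j\}$, so the $d_2(G_{ij})$ are linearly independent.

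For general $q$ the plan is a triangularity argument: order the admissible basis first by the largest vertex $v^*$ occurring in the forest, then lexicographically. Since $v^*$ is the head of its unique incident edge $e^*$ and the tail of none, it is a leaf, and deleting $e^*$ yields a type basis element in which $v^*$ is isolated and carries a middle class $e_\a^\vee$ --- a feature shared by no term of $d_2(M)$ obtained by deleting any other edge of $M$. The main obstacle is that several admissible monomials share this pendant shape at $v^*$, and after deletion the partner class $e_\a$ may be forced onto the same residual tree for several of them; these coincidences are exactly the ones governed by the three-term Arnold relation, so one must check that over the admissible basis they assemble into an invertible triangular transition rather than cancelling. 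The $\mc{S}_n$-equivariance of $d$ recorded in Section \ref{section4} reduces this bookkeeping to one monomial per type, and the passage from $d_2$ back to $d$ is then formal. (Alternatively one can induct on $n$ using the sub-DGA $E(X,n-1)\subset E(X,n)$ spanned by the first $n-1$ indices: $d$ preserves it, the case $n=2$ is trivial, and one is reduced to the injectivity of the induced map on the quotient spanned by the classes of the $G_{I_*J_*}G_{an}$; the same weight-and-triangularity analysis applies with $n$ playing the distinguished role, and the hypothesis $X\neq\mathbb{C}P^1$ enters through the non-vanishing of $\sum_\a\pm\,p_a^*(e_\a)p_n^*(e_\a^\vee)$.)
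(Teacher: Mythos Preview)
Your reduction to the ``weight-two'' piece $d_2$ is sound: since the diagonal splits as $(1\o w+w\o 1)+\sum_\a\pm e_\a\o e_\a^\vee$, and deleting an edge of a forest always separates its endpoints, the $d_0$ part lands in types with exactly one tree marked $w$ while $d_2$ lands in types with two trees carrying middle-degree classes; these are different summands in the type decomposition, so $\ker d=\ker d_0\cap\ker d_2$ and injectivity of $d_2$ suffices. Your treatment of $q=1$ is correct and in fact cleaner than the paper's: there the authors decompose $E_1^{2m-1}\cong V(n)\oplus V(n-1,1)\oplus V(n-2,2)$ and check via Schur's lemma and three explicit elements that $d$ is nonzero on each irreducible piece, whereas you simply observe that the $d_2(G_{ij})$ are nonzero and supported on pairwise disjoint K\"unneth summands.

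The genuine gap is the step from $q=1$ to general $q$. Your leading-term analysis correctly isolates, for each admissible $M$ with largest second index $v^*=j_q$, the unique piece of $d_2(M)$ in which $v^*$ sits as an isolated marked vertex. But as you yourself note, distinct admissible monomials $M_0\cdot G_{i_qv^*}$ with the same $M_0$ and with $i_q$ ranging over a fixed tree $T$ of $M_0$'s forest all produce the \emph{same} leading term, since $e_\a$ is forced to the root of $T$. This is not a triangular transition: on each such block the leading-term map has a kernel of dimension $|T|-1$. The appeal to $\mc{S}_n$-equivariance does not separate these monomials (they lie in the same type $(L_*,H_*)$), and the alternative induction on $n$ you sketch refers back to ``the same weight-and-triangularity analysis'', so the circle is not closed.

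The paper fills this gap by a short induction on $q$ using the \emph{smallest} exterior factor rather than the largest vertex. If $u\in E_q^{q(2m-1)}$ is a nonzero cocycle, let $G_{ij}$ be the least generator (in the order $G_{12}<G_{13}<G_{23}<G_{14}<\ldots$) occurring in its canonical expansion and write $u=G_{ij}y+z$ with $y\in E_{q-1}^{(q-1)(2m-1)}$ and $z\in E_q$ involving only factors $>G_{ij}$. Then $0=du=-G_{ij}\,dy+p_{ij}^*(\Delta)y+dz$; since the last two summands contain only generators strictly larger than $G_{ij}$, one reads off $dy=0$, whence $y=0$ by the inductive hypothesis, contradicting $y\neq 0$. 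This inductive step is purely combinatorial and uses nothing about $X$; the hypothesis $X\neq\mathbb{C}P^1$ enters only at the base $q=1$. You could therefore keep your $d_2$ argument for $q=1$ and replace the unfinished triangularity for $q\geq 2$ by this smallest-generator induction.
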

The top horizontal line has no contribution to the cohomology, too:
\begin{prop}\label{prop5}
The top differentials in the Kri\v{z} model are injective for any $k$ in the interval $[(n-1)(2m-1), n(2m-1)+1]:$
$$d: E_{n-1}^k(X,n)\rightarrowtail E_{n-2}^{k+1}(X,n).$$
\end{prop}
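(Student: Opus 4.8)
The plan is to analyze the top row $q=n-1$ of the Kri\v{z} model directly in terms of a combinatorial basis, and to show that the differential $d:E_{n-1}^k\ra E_{n-2}^{k+1}$ is injective by exhibiting an explicit partial inverse (or at least an injective ``leading term'' map). In exterior degree $n-1$ a product $G_{I_*J_*}$ of $n-1$ generators that is nonzero in $E_*^*(X,n)$ must, after applying the Arnold/Kri\v{z} relations $G_{ik}G_{jk}=G_{ij}G_{jk}-G_{ij}G_{ik}$, be rewritten in a normal form indexed by a tree on the vertex set $\{1,\dots,n\}$ — indeed $n-1$ edges on $n$ vertices with no relation forcing a reduction means the underlying graph is a spanning tree. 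So a basis of $E_{n-1}^*(X,n)$ is given by elements $\omega_T\cdot G_T$, where $T$ runs over spanning trees of $K_n$ (in a chosen normal form for the $G$'s) and $\omega_T$ runs over a basis of the appropriate quotient of $H^*(X)^{\o n}$ — the relations $p_j^*(x)G_{ij}=p_i^*(x)G_{ij}$ force all the tensor factors along a tree to be ``glued'', so effectively $\omega_T$ is a single cohomology class of $X$ placed on the (contracted) tree. I would first set up this basis carefully, citing the type decomposition and Theorem \ref{thm2.8} so that the bookkeeping is organized by types.

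Next I would compute $d(\omega_T\cdot G_T)$. Since $d$ is a derivation, $d(G_T)=\sum_{e\in T}\pm G_{T\setminus e}\,p_e^*(\Delta)$, where $G_{T\setminus e}$ is the product of the remaining $n-2$ generators (a forest with two components) and $p_e^*(\Delta)$ is the diagonal class on the two endpoints of $e$. Each such term, after renormalizing, contributes a basis element of $E_{n-2}^{k+1}(X,n)$ indexed by the \emph{forest} $T\setminus e$ together with a cohomology decoration coming from $\omega_T$ and from $\Delta=\sum w_a\o w_a^\vee$. The key structural point is that distinct trees $T$ give distinct ``supports'': a two-component spanning forest $F$ on $\{1,\dots,n\}$ can be completed to a spanning tree in exactly as many ways as there are edges between its two components, and the diagonal class on such an added edge is nonzero. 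I would use this to argue that, fixing a ``most degenerate'' piece of $\Delta$ (say the term $w\o1$, i.e.\ the fundamental class on one side), the assignment $T\mapsto$ (one chosen leading term of $d(\omega_T G_T)$) is injective, so no nontrivial linear combination $\sum c_T\,\omega_T G_T$ can die under $d$. This is essentially the same mechanism as in Proposition \ref{prop4}, transported to the top row, and one should be able to quote or parallel that argument; the $\mc{S}_n$-equivariance reduces the check to one representative per type.

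The main obstacle is controlling \emph{cancellation} among the terms $d(\omega_T G_T)$ for different $T$ sharing a common sub-forest: a forest $F$ obtained by deleting an edge from $T$ is also obtained by deleting an edge from many other trees $T'$, so the images overlap heavily and one must ensure the coefficients do not conspire to cancel. The way I would handle this is to impose a partial order (or a monomial order) on spanning trees — for instance, order edges lexicographically and compare trees by their largest edge, or use the ``exterior word'' normal form — and show that for the $\le$-maximal tree appearing in a putative kernel element, the corresponding top term survives because it cannot be produced by any strictly larger tree. A secondary subtlety, and the reason $\mathbb{C}P^1$ is excluded, is that the argument needs $H^*(X)$ to be rich enough that the diagonal class $\Delta$ has a component $w_a\o w_a^\vee$ with $\deg w_a>0$ available to absorb the extra cohomological degree $+1$ without collapsing; for $\mathbb{C}P^1$ with $m=1$ the generators $G_{ij}$ have degree $1$ and the top row is too thin, so one must check that the degree range $[(n-1)(2m-1),\,n(2m-1)+1]$ together with $\dim_{\mathbb C}X=m\ge 1$ (and $X\ne\mathbb{C}P^1$, equivalently $H^*(X)$ not concentrated in the ``minimal'' pattern) indeed leaves room. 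I expect that, once the spanning-tree basis and the leading-term bookkeeping are in place, the injectivity falls out exactly as in Proposition \ref{prop4}, with the top-row case being if anything slightly easier because the target forests always have exactly two components.
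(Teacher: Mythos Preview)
Your general setup (spanning trees, $d$ as a sum over edge-deletions, looking for a leading term) is sound, but you are working much harder than necessary and you make one genuine error.

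The error first: you claim that $\mathbb{C}P^1$ must be excluded and that one needs a component $w_a\o w_a^\vee$ of $\Delta$ with $\deg w_a>0$. That is wrong; Proposition~\ref{prop5} has no hypothesis $X\neq\mathbb{C}P^1$. The paper's argument uses only the $1\o w$ component of $\Delta$, which is present for every $X$, so nothing about the ``richness'' of $H^*(X)$ enters. (The exclusion of $\mathbb{C}P^1$ is specific to Proposition~\ref{prop4}, whose base case genuinely needs a nontrivial factorization $xy=w$.)

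The overcomplication: you propose a monomial-order/leading-term argument to control cancellations among trees sharing a common sub-forest. The paper avoids all of this with one structural observation about the canonical (Bezrukavnikov) basis in exterior degree $n-1$. Since $J_*\subset\{2,\dots,n\}$ and $|J_*|=n-1$, one has $J_*=\{2,\dots,n\}$ for \emph{every} top-row basis monomial; in particular $j_1=2$, $i_1=1$, so $G_{12}$ is always the first factor. Hence every basis element is $p_1^*(x_h)\,G_{12}G_{i_23}\cdots G_{i_{n-1}n}$. Now compose $d$ with the projection onto the span of $p_1^*(H^i)p_2^*(w)\,G_{i_23}\cdots G_{i_{n-1}n}$ (forests with $2\notin J_*$ and the $p_2^*$-slot equal to $w$). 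Deleting any edge other than $G_{12}$ leaves $G_{12}$ in the product, so $2\in J_*$ and that term is killed by the projection; deleting $G_{12}$ contributes $p_{12}^*(\Delta)$, whose only piece with $p_2^*(w)$ is $1\o w$. The composite is therefore the obvious bijection $p_1^*(x_h)G_{12}G_{i_23}\cdots G_{i_{n-1}n}\mapsto p_1^*(x_h)p_2^*(w)G_{i_23}\cdots G_{i_{n-1}n}$, and $d$ is injective. No order on trees, no induction, no cancellation analysis is needed --- and no restriction on $X$.
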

In Section 5 we show that the ``right side" of the trapezoid is an acyclic complex:
\begin{prop}\label{prop6}
All cohomology groups of the subcomplex $$0\ra E_{n-1}^{n(2m-1)+1}(X,n)\ra E_{n-2}^{n(2m-1)+2}(X,n)\ra \ldots \ra E_0^{2nm}(X,n)\ra 0$$ are zero.
\end{prop}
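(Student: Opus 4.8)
The plan is to identify the complex in the statement with the ``right side'' of the trapezoid, the subcomplex
$$C_\bullet(X,n):=\bigoplus_{q=0}^{n-1}E_q^{2mn-q}(X,n)\subseteq E(X,n)$$
(it is $d$-stable because $d$ lowers $q$ by one and raises $k$ by one, hence preserves $k+q$), and to prove it is contractible by induction on $n$, using a tensor decomposition coming from the projection $F(X,n)\to F(X,n-1)$. The first step is to describe the monomials on this diagonal. In $E_q^{2mn-q}$ a monomial $p_1^*(x_1)\cdots p_n^*(x_n)G_{I_*J_*}$ has $\sum_i\deg x_i=2mn-q-q(2m-1)=2m(n-q)$, and $n-q$ is the number of connected components of the forest spanned by the exterior generators. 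Using $p_j^*(x)G_{ij}=p_i^*(x)G_{ij}$ one can push all classes of $H^*(X)$ on the vertices of one component onto a single vertex; the resulting class has degree $\le 2m$ since $H^{>2m}(X)=0$, and as $2m(n-q)$ is the largest total degree compatible with $n-q$ components each of degree $\le 2m$, every component must carry a nonzero multiple of the fundamental class $w$; because $w^2=0$, no component carries two copies of $w$.

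Next I would use the module structure of the Kri\v{z} model. Iterating the relation $G_{in}G_{jn}=G_{ij}G_{jn}-G_{ij}G_{in}$ ($i<j<n$) rewrites every monomial so that at most one exterior generator $G_{in}$ involving the last point occurs, and $p_n^*(x)G_{in}=p_i^*(x)G_{in}$ then empties the $n$-th cohomology slot; this exhibits $E(X,n)$ as a free left $E(X,n-1)$-module on $\{p_n^*(e_\alpha)\}_\alpha\cup\{G_{in}\}_{1\le i<n}$, with $\{e_\alpha\}$ a basis of $H^*(X)$ (compare \cite{FM,K}). Intersecting with the diagonal, a monomial $c\cdot p_n^*(e_\alpha)$, resp.\ $c\cdot G_{in}$, with $c\in E_{q'}^{k'}(X,n-1)$ lies in $C_\bullet(X,n)$ iff $k'+q'=2mn-\deg e_\alpha$, resp.\ $k'+q'=2m(n-1)$; since every nonzero bigraded piece of $E(X,n-1)$ satisfies $k'+q'\le 2m(n-1)$, in the first case only $e_\alpha=w$ survives, and in both cases $c\in C_\bullet(X,n-1)$. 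Hence
$$C_\bullet(X,n)=C_\bullet(X,n-1)\cdot p_n^*(w)\ \oplus\ \bigoplus_{i=1}^{n-1}C_\bullet(X,n-1)\cdot G_{in}$$
as graded vector spaces.

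The key point is that $d$ turns this into a tensor product of complexes. One has $d(c\,p_n^*(w))=(dc)\,p_n^*(w)$, and for $c\in C_\bullet(X,n-1)$
$$d(c\,G_{in})=(dc)\,G_{in}+(-1)^{|c|}\,c\cdot p_{in}^*(\d),\qquad c\cdot p_{in}^*(\d)=c\cdot p_n^*(w).$$
The second equality uses the first step: after transferring the class $w$ that $c$ carries on the component of $i$ onto $i$ itself, every summand of $p_{in}^*(\d)=p_{in}^*(w\o1+\cdots+1\o w)$ except $p_i^*(1)p_n^*(w)=p_n^*(w)$ acquires a factor $w^2=0$. Thus $C_\bullet(X,n)\cong C_\bullet(X,n-1)\o K_n$ as complexes, where $K_n$ is the two-term complex $\mathbb{Q}\lan G_{1n},\dots,G_{n-1,n}\ran\to\mathbb{Q}\lan p_n^*(w)\ran$ sending each $G_{in}$ to $p_n^*(w)$; the sign $(-1)^{|c|}$ is exactly the Koszul sign needed, and checking this compatibility is the one step that requires care.

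Finally I would finish by induction on $n$. Since $C_\bullet(X,1)=H^{2m}(X)=\mathbb{Q}$ (in exterior degree $0$) and $K_2=\big(\mathbb{Q}\lan G_{12}\ran\xrightarrow{\ \sim\ }\mathbb{Q}\lan p_2^*(w)\ran\big)$ is acyclic, $C_\bullet(X,2)=C_\bullet(X,1)\o K_2$ is acyclic. For $n\ge 3$, if $C_\bullet(X,n-1)$ is acyclic then $C_\bullet(X,n)=C_\bullet(X,n-1)\o K_n$ is the tensor product over the field of coefficients of an acyclic complex with an arbitrary complex, which is again acyclic since the coefficients form a field. This proves the proposition; equivalently one obtains an explicit contracting homotopy $h$ by unravelling the induction ($h$ ``attaches the second point to the first'', replacing $p_2^*(w)$ by $G_{12}$ when $G_{12}$ is absent and vanishing otherwise), but this only relocates, rather than removes, the sign verification, which remains the main technical point.
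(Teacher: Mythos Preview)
Your proof is correct, but it takes a genuinely different route from the paper. The paper identifies $E_*^{Top}(X,n)$ directly with the Arnold algebra $(\mc{A}^*(n),\partial)$, $\partial G_{ij}=1$, via the $\mc{S}_n$-equivariant map $G_{I_*J_*}\mapsto\prod_{h\notin J_*}p_h^*(w)\,G_{I_*J_*}$, and then kills $\mc{A}^*(n)$ in one stroke with the contracting homotopy $h(\gamma)=G_{12}\gamma$. Your argument instead filters by the last point and proves, by induction on $n$, a tensor decomposition $C_\bullet(X,n)\cong C_\bullet(X,n-1)\otimes K_n$; acyclicity then follows from K\"unneth once $K_2$ is seen to be acyclic. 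The paper's approach is shorter and has the bonus of being manifestly $\mc{S}_n$-equivariant (which is the point of this section), whereas your inductive step privileges the last point and only the final statement is symmetric. On the other hand, your decomposition makes transparent the ``forget a point'' structure of the model and would adapt well to variants where a global homotopy is less obvious. Amusingly, the explicit homotopy you sketch in your last sentence (``replace $p_2^*(w)$ by $G_{12}$ when $G_{12}$ is absent'') is exactly the paper's $h(\gamma)=G_{12}\gamma$ transported through the isomorphism $\mc{A}^*(n)\cong E_*^{Top}(X,n)$.
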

Other (smaller) copies of this subcomplex are contained in the interior of the trapezoid and their sum gives a large acyclic complex which is also
$\mc{S}_n$-equivariant. This subcomplex $E_*^*(w(X,n))$ and the quotient $E_*^*(X,n)\diagup E_*^*(w(X,n))$ are described in Proposition \ref{prop5.7} and
Proposition \ref{prop5.8}; the location of the subcomplex $E_*^*(w(X,n))$ is given in the diagram by the interior lines with slope $-1$.
In \cite{BMP} a different acyclic subcomplex of $E_*^*(X,n)$ is described: this is an ideal, but is not an $\mc{S}_n$-submodule.
The subcomplex $E_*^*(w(X,n))$ is an $\mc{S}_n$-subalgebra, but not an ideal. The right side of the trapezoid, denoted by $E_*^{Top}(X,n)$ in
Section 5, is an acyclic ideal which is also an $\mc{S}_n$-submodule, but it is quite small.

In the last section the simplest and, from the viewpoint of Proposition \ref{prop4}, the exceptional case of
$\mathbb{C}P^1$ is analyzed; we recover and we complete the results of Cohen-Taylor \cite{CT1}, \cite{CT2} and
Feichtner-Ziegler \cite{FZ}, computing in this case the Poincar\'{e} polynomials in two variables $$P_{F(X,n)}(t,s)=\sum_{k,q\geq0}(dim\,H_q^k)t^ks^q.$$
\begin{theorem}\label{thm6}
In the cohomology algebra of the configuration space $F(\mathbb{C}P^1,n)$ ($n\geq4$) the non zero bigraded components are
$$H_q^q\cong H_{q+1}^{q+3}\,\mbox{ for }\,q=0,1,\ldots ,n-3. $$
Its Poincar\'{e} polynomial is $$P_{F(\mathbb{C}P^1,n)}(t,s)=(1+st^3)(1+2st)(1+3st)\ldots (1+(n-2)st).$$
\end{theorem}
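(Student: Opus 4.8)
The plan is to compute the cohomology of $E(\mathbb{C}P^1,n)$ by induction on $n$, using the model to settle two base cases and, at each step, to write down enough global cocycles to collapse the spectral sequence of a forgetful fibration; the place where one would like to fold $n=3$ into the induction, but cannot, is exactly the exceptional behaviour recorded in Proposition~\ref{prop4}. Specialize first: for $X=\mathbb{C}P^1$ one has $m=1$ and $H^*(\mathbb{C}P^1)=\mathbb{Q}\langle 1,w\rangle$ with $|w|=2$, $w^2=0$, so $E(\mathbb{C}P^1,n)$ is the exterior algebra on the $G_{ij}$ (total degree $1$, exterior degree $1$) over $\mathbb{Q}[w_1,\dots,w_n]/(w_i^2)$ (the $w_i$ of total degree $2$, exterior degree $0$), modulo the three families of relations, with $d(w_i)=0$ and $d(G_{ij})=w_i+w_j$; a monomial with $q$ exterior generators and $w$-part of degree $2r$ lies in bidegree (total, exterior)$\,=(q+2r,q)$.

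For $n=2$ the algebra is $6$-dimensional and an immediate computation gives $H_*^*(E(\mathbb{C}P^1,2))=\mathbb{Q}\oplus\mathbb{Q}$, concentrated in bidegrees $(0,0)$ and $(2,0)$, i.e. $P_{F(\mathbb{C}P^1,2)}=1+t^2$. For $n=3$ (this is $PGL_2(\mathbb{C})\simeq_{\mathbb{Q}}S^3$, the Cohen--Taylor case) one checks by hand that each $w_i$ is a coboundary, $w_i=\frac{1}{2}\,d(G_{ij}+G_{ik}-G_{jk})$, hence $H_0^2=0$; that $d\colon E_2^2\rightarrowtail E_1^3$ is injective; and that the rest of the complex then leaves exactly $H_0^0=\mathbb{Q}$ and $H_1^3=\mathbb{Q}$. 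Thus $P_{F(\mathbb{C}P^1,3)}=1+st^3$, the nontrivial class having exterior degree $1$.

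For the inductive step, fix $n\ge 4$ and take the sub-DGA $E(\mathbb{C}P^1,n-1)\hookrightarrow E(\mathbb{C}P^1,n)$ on the generators with indices $<n$; this models the forgetful fibration $F(\mathbb{C}P^1,n)\to F(\mathbb{C}P^1,n-1)$ with fibre $\mc F_n=\mathbb{C}P^1\setminus\{n-1\text{ points}\}$. By the standard freeness of these models, $E(\mathbb{C}P^1,n)$ is free over $E(\mathbb{C}P^1,n-1)$ on $\{1,w_n,G_{1n},\dots,G_{n-1,n}\}$, and the associated ``fibre complex'' $\overline{\mc F}_n$ has cohomology $\mathbb{Q}$ in bidegree $(0,0)$, a copy of $\mathbb{Q}^{\,n-2}$ in bidegree $(1,1)$ spanned by the $[G_{in}-G_{jn}]$, and nothing else, so its two-variable Poincar\'e polynomial is $1+(n-2)st$. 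In the spectral sequence $E_2=H_*^*(E(\mathbb{C}P^1,n-1))\otimes H_*^*(\overline{\mc F}_n)\Rightarrow H_*^*(E(\mathbb{C}P^1,n))$ the fibre part lives in degrees $0$ and $1$, so the only possibly nonzero differential is the transgression $[G_{in}-G_{jn}]\mapsto[\,d(G_{in}-G_{jn})\,]=[\,w_i-w_j\,]$, an element of $H_0^2(E(\mathbb{C}P^1,n-1))$. As $n-1\ge 3$ there is a third index $k\le n-1$ and $w_i-w_j=d(G_{ik}-G_{jk})$, so this transgression vanishes: the spectral sequence degenerates at $E_2$ (equivalently, $\omega_i=(G_{in}-G_{jn})-(G_{ik}-G_{jk})$ are global cocycles of bidegree $(1,1)$ restricting to a basis of $H^1(\mc F_n)$, so Leray--Hirsch applies). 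Therefore $P_{F(\mathbb{C}P^1,n)}(t,s)=P_{F(\mathbb{C}P^1,n-1)}(t,s)\cdot(1+(n-2)st)$, and iterating from $P_{F(\mathbb{C}P^1,3)}=1+st^3$ gives
$$P_{F(\mathbb{C}P^1,n)}(t,s)=(1+st^3)(1+2st)(1+3st)\cdots(1+(n-2)st).$$
Expanding, every monomial is $(st)^q$ (choose $q$ of the factors $1+kst$) possibly times $st^3$; so the nonzero bigraded pieces are exactly $H_q^q$ and $H_{q+1}^{q+3}$ for $q=0,1,\dots,n-3$, each of dimension the $q$-th elementary symmetric function of $2,3,\dots,n-2$, and the isomorphism $H_q^q\cong H_{q+1}^{q+3}$ is induced by multiplication with the degree-$3$ class pulled back from the $PGL_2(\mathbb{C})$-factor $F(\mathbb{C}P^1,3)$.

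The main obstacle is the collapse in the inductive step: it rests entirely on $[w_i-w_j]=0$ in $H^2(E(\mathbb{C}P^1,n-1))$, which needs a third index and therefore genuinely fails at $n=3$ — for the passage $F(\mathbb{C}P^1,2)\to F(\mathbb{C}P^1,3)$ the same transgression is an isomorphism $\mathbb{Q}=H^0\otimes H^1(\mc F)\xrightarrow{\sim}H_0^2(E(\mathbb{C}P^1,2))=\mathbb{Q}$, which kills the $st$ and $t^2$ summands of $1+st+t^2+st^3$ and leaves $1+st^3$. This is the model-theoretic shadow of $\mathbb{C}P^1$ being the exception of Proposition~\ref{prop4}, so the induction must be set up with $n=2,3$ as honest base cases and the degeneration in the step justified, for instance by exhibiting the explicit cocycles $\omega_i$.
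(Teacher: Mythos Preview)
Your argument is correct. The approach, however, differs from the paper's in a genuine way.

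The paper reaches the ungraded Poincar\'e polynomial via the fibration $F(\mathbb{C},n-1)\hookrightarrow F(\mathbb{C}P^1,n)\to\mathbb{C}P^1$ (citing \cite{FZ}), and then locates the cohomology in the bigrading by $\mc{S}_n$-representation theory: it identifies $H_1^1\cong V(n-2,2)$ from Remark~\ref{rem4}, exhibits an explicit $V(n)$-cocycle $\gamma\in E_1^3$, and shows it survives because the isotypic component $V(n)$ is absent from $E_2^2\cong\mc{A}^2(n)$. The bigraded picture then follows from the algebra structure and a dimension count.

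You instead run an induction on $n$ along the \emph{forgetful} fibration $F(\mathbb{C}P^1,n)\to F(\mathbb{C}P^1,n-1)$, modelled inside the Kri\v{z} DGA by the free extension on $\{1,w_n,G_{1n},\dots,G_{n-1,n}\}$. Because the fibre cohomology sits only in degrees $0$ and $1$, the sole possible differential is the transgression, and your observation $w_i-w_j=d(G_{ik}-G_{jk})$ for $n-1\ge3$ (equivalently, the global bidegree-$(1,1)$ cocycles $\omega_i$) collapses it. The two-variable Poincar\'e polynomial then multiplies, and the bigraded statement is read off directly.

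What each approach buys: yours is more elementary and entirely self-contained in the model --- no representation theory, no appeal to \cite{FZ} --- and it yields the bigraded structure in one stroke. The paper's route, by contrast, extracts finer information along the way (the $\mc{S}_n$-structure of $H_1^1$ and $H_1^3$, and the subsequent Proposition on $H_2^2$), which your argument does not see. Your remark that the collapse genuinely fails at $n-1=2$ is exactly the content of the non-degeneration of $\mc{F}_3$ discussed in the paper, so the two arguments agree on where the exceptional behaviour of $\mathbb{C}P^1$ enters.
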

\begin{picture}(360,100)
\put(110,15){\put(0,0){\vector(1,0){90}}   \put(0,0){\vector(0,1){70}}           \multiput(-1.5,10)(0,10){5}{$\centerdot$}
\multiput(10,-1)(10,0){7}{$\centerdot$}    \put(-2,-2.5){$\bullet$}              \multiput(9,9)(20,0){2}{$\bullet$}
\multiput(18,18)(20,0){2}{$\bullet$}       \multiput(27,27)(20,0){2}{$\bullet$}  \multiput(36,36)(20,0){2}{$\bullet$}
\put(68,48){$\bullet$}                     \put(95,30){$H_*^*(F(\mathbb{C}P^1,n))$}
\scriptsize
\put(-8,10){1}    \put(-8,20){2}  \put(-22,40){$n-3$}    \put(-22,50){$n-2$}    \put(-10,65){$q$}
\put(10,-8){1}    \put(20,-8){2}  \put(30,-8){3}         \put(70,-8){$n$}       \put(85,-8){$k$}
\multiput(70,52.5)(0,-6){9}{\line(0,-1){3}}
\multiput(71,52.5)(-6,0){12}{\line(-1,0){3}}  }
\end{picture}

Other extensions and applications of the results of this paper could be found in \cite{AsB} and \cite{AzB}.

For the irreducible ${\mc S}_n$-modules we will use the standard notation (see \cite{FH}): $V(\lambda)$ corresponds to the
partition of $n,\,\, \,\,\lambda\vdash n,$
$\lambda=(\lambda_1\geq \lambda_2\geq\ldots\geq\lambda_t\geq1)$, and also the stable notation (see \cite{CF} or \cite{AAB}):
$V(\mu)_n=V(n-\sum \mu_i,\mu_1,\mu_2,\ldots,\mu_s)$ for $\mu=(\mu_1\geq\mu_2\geq\ldots\geq\mu_s\geq 1)$
satisfying the relation $n-\mathop{\sum}\limits_{i=1}^s \mu_i\geq \mu_1$.

\section{Cohomology classes in the forest}\label{section2}

We fix a (monomial) ordered basis for the cohomology algebra $H^*(X;\mathbb{Q}):$ $\mathcal{B}=\{x_1=1\prec x_2\prec\ldots\prec x_{B}=w\}$,
where $w\in H^{2m}(X;\mathbb{Q})$ is the fundamental class of $X$ and $B=\sum \b_i$ is the sum of Betti numbers;
we choose the order $\prec$ such that the sequence $\{\deg x_i\}_{i=1,B}$ is increasing (not necessarily strictly increasing). Using simple
computations with Diamond lemma (see \cite{Bg}) one can find a monomial basis for the Kri\v{z} model: we denote by
$G_{I_*J_*}=G_{i_1j_1}G_{i_2j_2}\ldots G_{i_qj_q}$ the exterior monomial corresponding to the sequences $I_*=(i_1,\ldots,i_q),\,J_*=(j_1,\ldots,j_q)$,
where $i_a<j_a$ $(a=1,2,\ldots,q)$ and $j_1<j_2<\ldots<j_q$, and by $x_{H_*}=x_{h_1}\o x_{h_2}\o \ldots \o x_{h_n}$ $(x_{h_a}\in \mathcal{B})$
a scalar from $H^{*\o n}$. Then
 $$\{x_{H_*}G_{I_*J_*}\mid x_{h_a}=1\,\,\mbox{if}\,\,a\in J_*,\, \deg x_{H_*}=k-q(2m-1) \}$$
 is a basis of $E_q^k(X,n)$ and we call it the \emph{canonical} (Bezrukavnikov) basis (see \cite{Bz}).

 The next result is obvious:
\begin{prop}
The bigraded components $E_q^k(X,n)$ are invariant under the action of the symmetric group and the differential $d$ is $\mc{S}_n$-equivariant: $$d(\s(x_{H_*}G_{I_*J_*}))=\s(d(x_{H_*}G_{I_*J_*})).$$
\end{prop}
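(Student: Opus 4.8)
The plan is to treat the two assertions separately, reducing each to a short check, since $\mc{S}_n$ acts on $E(X,n)$ by automorphisms of graded algebras. For the invariance of $E_q^k(X,n)$: by the Definition, $\s$ sends a monomial $p_1^*(x_{h_1})\cdots p_n^*(x_{h_n})G_{I_*J_*}$ to $p_{\s(1)}^*(x_{h_1})\cdots p_{\s(n)}^*(x_{h_n})G_{\s(I_*)\s(J_*)}$, that is, it permutes the $n$ tensor slots of $H^{*\o n}$ and relabels the indices of every exterior generator. Permuting the slots does not change the degree of the coefficient (the multiset of classes $\{x_{h_a}\}$ is untouched), and the relabelling $G_{ij}\mapsto G_{\s(i)\s(j)}$ changes neither the degree $2m-1$ of each generator nor their number $q$; hence $\s$ preserves the total degree $k$ and the exterior degree $q$, so $\s(E_q^k)\subseteq E_q^k$, and applying this to $\s^{-1}$ as well gives equality. (That all of this is well defined on the quotient $E(X,n)$ has already been recorded, the defining relations forming an $\mc{S}_n$-stable set.)

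For the equivariance of $d$: since $E(X,n)$ is generated as an algebra by the $p_i^*(x)$, $x\in H^*(X)$, and the $G_{ij}$, and since the set $\{\,a\in E(X,n)\mid d(\s a)=\s(d a)\,\}$ is a subalgebra --- for $a,b$ in it,
\[
d(\s(ab))=d(\s a)\,\s b+(-1)^{\deg a}\,\s a\,d(\s b)=\s(da)\,\s b+(-1)^{\deg a}\,\s a\,\s(db)=\s\bigl(d(ab)\bigr),
\]
using that $\s$ is multiplicative, $d$ a derivation of total degree $+1$, and $\deg \s a=\deg a$ --- it is enough to check $d\s=\s d$ on these generators. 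On $p_i^*(x)$ both sides vanish, because $d$ is zero on $H^{*\o n}$ and $\s$ maps $H^{*\o n}$ into itself. On $G_{ij}$: the element $d(G_{ij})=p_{ij}^*(\d)$ is the pullback of the diagonal class $\d\in H^*(X)\o H^*(X)$ along the projection $X^n\to X^2$ onto the $i$-th and $j$-th factors, so $\s\bigl(p_{ij}^*(\d)\bigr)=p_{\s(i)\s(j)}^*(\d)=d\bigl(G_{\s(i)\s(j)}\bigr)=d(\s G_{ij})$. Therefore $d\s=\s d$ on all of $E(X,n)$, which is the asserted equivariance; combined with the first part it shows that $d$ restricts to maps of $\mc{S}_n$-modules $E_q^k(X,n)\to E_{q-1}^{k+1}(X,n)$. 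There is no genuine obstacle here; the only point worth stating carefully is this reduction to generators, after which the verification is immediate.
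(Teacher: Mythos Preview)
Your proof is correct. The paper itself gives no argument at all---the proposition is introduced with ``The next result is obvious'' and left without proof---so there is nothing to compare against beyond noting that your verification (preservation of bidegree on monomials, then reduction of $d\s=\s d$ to algebra generators via the subalgebra-of-equalizers trick) is exactly the routine check one would write out if pressed.
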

This proposition and the Schur lemma give a splitting of the Kri\v{z} complex into subcomplexes corresponding to the decomposition
of $E_*^*$ into isotypical components $E_*^*(V(\lambda))$, for $\l$ an arbitrary partition of $n$:
$$(E_*^*(X,n),d)=\mathop{\bigoplus}\limits_{\lambda\vdash n}(E_*^*(V(\l)),d_{\l}).$$
The cohomology algebra $H^*(X;\mathbb{Q})$ satisfies Poincar\'{e} duality; denote by $\mathcal{B}^*$ the Poincar\'{e} dual basis
$$\mc{B}^*=\{y^1=w,y^2,\ldots, y^B=1\mid x_iy^j=\delta_{ij}w\}.$$

\begin{prop}\label{prop2.2}
For any $q=0,1,\ldots,n-1$ and any $k$ in the interval of integers $[(2m-1)q,2mn-q]$, there is an isomorphism of $\mc{S}_n$-modules
$$E_q^k(X,n)\cong E_q^{2mn+2q(m-1)-k}(X,n).$$
\end{prop}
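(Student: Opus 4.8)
The plan is to exhibit an explicit bijection on the canonical (Bezrukavnikov) basis that intertwines with the $\mathcal S_n$-action, by applying Poincaré duality coordinate-wise to the $H^*(X)^{\otimes n}$ part of each basis monomial while leaving the exterior part $G_{I_*J_*}$ untouched. A monomial $x_{H_*}G_{I_*J_*}$ in the canonical basis of $E_q^k(X,n)$ has $x_{h_a}=1$ for $a\in J_*$ and $\deg x_{H_*}=k-q(2m-1)$. I would send it to $y^{H_*}G_{I_*J_*}$, where $y^{h_a}$ is the Poincaré dual of $x_{h_a}$ in $\mc B^*$, and check this lands in $E_q^{2mn+2q(m-1)-k}(X,n)$.

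\medskip

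\textbf{Step 1: degree bookkeeping.} For each index $a$, if $a\in J_*$ then $x_{h_a}=1$ and its dual is $w\in H^{2m}(X)$ — but this would break the canonical-basis condition $x_{h_a}=1$ for $a\in J_*$. So the correct map must instead dualize only the \emph{non-constrained} slots and handle the $J_*$-slots separately; the cleanest fix is to first rewrite using the relations $p_j^*(x)G_{ij}=p_i^*(x)G_{ij}$ to move cohomology off the larger index, or simply to note that on $E_q^k$ the relevant free tensor factors are those indexed by $\{1,\dots,n\}\setminus J_*$, a set of size $n-q$. Dualizing each of those $n-q$ factors sends total cohomological degree $\deg x_{H_*}=k-q(2m-1)$ to $2m(n-q)-\big(k-q(2m-1)\big)=2mn-2mq-k+2mq-q=2mn-q-k$. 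Adding back the exterior contribution $q(2m-1)$ gives total degree $2mn-q-k+q(2m-1)=2mn+2q(m-1)-k$, as claimed. I would verify that $k\in[(2m-1)q,2mn-q]$ is exactly the range in which both $k$ and its mirror $2mn+2q(m-1)-k$ give nonempty components, so the map is defined and surjective onto a basis.

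\medskip

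\textbf{Step 2: well-definedness.} Since $\mc B^*$ is again a monomial basis (by Poincaré duality $x_iy^j=\delta_{ij}w$, the $y^j$ are, up to scalar, basis monomials), the assignment $x_{H_*}G_{I_*J_*}\mapsto y^{H_*}G_{I_*J_*}$ takes canonical basis elements to canonical basis elements (after the slot-bookkeeping of Step 1), and is a bijection because Poincaré duality is an involution up to relabeling $\mc B\leftrightarrow\mc B^*$. This gives a linear isomorphism $E_q^k(X,n)\xrightarrow{\ \sim\ }E_q^{2mn+2q(m-1)-k}(X,n)$.

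\medskip

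\textbf{Step 3: $\mathcal S_n$-equivariance — the main point.} By the definition of the action, $\s$ permutes the tensor factors of $H^{*\otimes n}$ and relabels the indices of $G_{I_*J_*}$, with \emph{no sign or scalar change on the cohomology factors}. Passing to Poincaré duals is applied factor-by-factor, so it commutes with permuting the factors; and it does nothing to the exterior part, so it commutes with the index relabeling. Hence the map is $\mathcal S_n$-equivariant and we get an isomorphism of $\mathcal S_n$-modules. The one subtlety — and the step I expect to be the real obstacle — is that the naive coordinate-wise dual does \emph{not} respect the normalization $x_{h_a}=1$ on $J_*$-slots, so the map as first written is not literally basis-to-basis; reconciling this (either by the reformulation in Step 1 restricting duality to the $n-q$ free slots, or by absorbing the discrepancy using the relation $p_j^*(x)G_{ij}=p_i^*(x)G_{ij}$ and checking the result is independent of all choices made) is where care is needed. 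Once that is pinned down, equivariance is immediate from the two observations above, and the degree count of Step 1 closes the argument.
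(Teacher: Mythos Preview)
Your map is exactly the paper's $\Phi$: dualize the $n-q$ free slots, leave the $J_*$-slots at $1$ and the exterior part $G_{I_*J_*}$ unchanged; the paper then asserts $\mc S_n$-equivariance with no more detail than you give (``It is easy to see''). The subtlety you flag is genuine but harmless---viewing canonical basis elements as marked forests (Section~\ref{section2}), $\Phi$ simply replaces each tree's mark $x_h$ by its Poincar\'e dual $y^h$, and since the $\mc S_n$-action permutes trees carrying their marks along while the relations used to restore canonical form never mix marks across trees (and Koszul signs match because $\deg y^h\equiv\deg x_h\pmod 2$), equivariance follows.
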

\begin{proof}
Define the map $\Phi:E_q^k\ra E_q^{2mn+2q(m-1)-k}$ on the basis by
$$\Phi(x_{H_*}G_{I_*J_*})=x'_{H_*}G_{I_*J_*},$$
where the factors of $x'_{H_*}=x_{h_1}'\o x_{h_2}'\o \ldots \o x_{h_n}'$ are given by
$$x_{h_a}'=\Bigg\{\begin{array}{ll}
                            1(=x_{h_a}) & \mbox{if}\,\, h_a\in J_* \\
                            y^{h_a}   &  \mbox{if} \,\,h_a\,\,\mbox{is not in}\,\, J_*.
                          \end{array}  $$
It is easy to see that $\Phi$ is $\mc{S}_n$-equivariant and the sum of the total degree of $x_{H_*}G_{I_*J_*}$ and the total
degree of $\Phi(x_{H_*}G_{I_*J_*})$ is $2mn+2q(m-1)$.
\end{proof}
Now we associate to monomials in the Kri\v{z} model $E_*^*(X,n)$ \emph{marked graphs}; these are extensions of the graphs
introduced in \cite{LeSo} in order to study the representation theory of the Arnold algebra, the cohomology algebra of $F(\mathbb{C},n)$, see \cite{A}.
To any monomial $G_{I_*J_*}$ from the Arnold algebra $\mc{A}(n)$, Lehrer and Solomon associated a graph $\Gamma$ with
vertices $\{1,2,\ldots,n\}$ and edges $\{i,j\}$ corresponding to the factors $G_{ij}$ of the given monomial.
We associate to the monomial $\mu=x_{h_1}\ldots\o x_{h_n}G_{I_*J_*}$ from $H^*(X)^{\o n}\o \mc{A}(n)$ the graph $\Gamma$ corresponding to $G_{I_*J_*}$, marking the vertices $1,2,\ldots,n$ with the monomials $x_{h_1},\ldots,x_{h_n}$ from the fixed basis $\mc{B}$. Due to the relation $p_j^*(x)G_{ij}  =  p_i^*(x)G_{ij}$, in the image of $\mu$ in the Kri\v{z} algebra $E_*^*(X,n)$ we can move all the factors $x_{h_i}$ corresponding to a given connected component of $\Gamma$ on the smallest index of that component; in this way the marks of the associated graph of the monomial $\mu=x_{h_1}\ldots\o x_{h_n}G_{I_*J_*}$ in $E_*^*(X,n)$ ($x_h=1$ if $h\in J_*$) are the marks of the connected components of the graph $\Gamma$.

\begin{example}
Consider the monomial $\mu=x_{H_*}G_{I_*J_*}\in E_{6}^{*}(X,11)$ given by
$$x_{h_1}\o1\o1\o x_{h_4}\o1\o1\o1\o x_{h_8}\o1\o x_{h_{10}}\o x_{h_{11}}G_{12}G_{23}G_{45}G_{46}G_{47}G_{89}.$$
Its associated marked graph is

\begin{picture}(60,60)
\multiput(30,29)(30,0){3}{$\centerdot$}       \multiput(210,29)(30,0){4}{$\centerdot$}
\put(149.5,40){$\centerdot$}                    \multiput(125,15)(25,0){3}{$\centerdot$}
\multiput(30.5,30)(30,0){2}{\line(1,0){30}}   \put(211,30){\line(1,0){30}}
\put(151,41){\line(-1,-1){25}}                \put(151,41){\line(0,-1){25}}                   \put(151,41){\line(1,-1){25}}
\scriptsize
\put(28,18){$1$}   \put(58,18){$2$}           \put(88,18){$3$}      \put(160,40){$4$}         \put(118,5){$5$}    \put(148,5){$6$}
\put(178,5){$7$}   \put(209,18){$8$}          \put(239,18){$9$}     \put(267,18){$10$}        \put(297,18){$11$}  \put(25,40){$x_{h_1}$}
\put(143,50){$x_{h_4}$}                       \put(200,40){$x_{h_8}$}                         \put(263,40){$x_{h_{10}}$}
\put(293,40){$x_{h_{11}}$}
\end{picture}

\end{example}
It is obvious how to obtain an element in $E_*^*(X,n)$ starting with a graph with vertices $\{1,2,\ldots,n\}$ and marks
from $H^*(X)$ on the connected components of this graph.

\begin{rem}
\cite{LeSo} If a marked graph contains a cycle, the associated element in $E_*^*(X,n)$ is zero.  Therefore we will consider only
marked forests (all the connected components are trees).
\end{rem}
\begin{proof}Start an induction on the length of the cycle with length 3:
$$G_{ij}G_{ik}G_{jk}=G_{ij}(G_{ij}G_{jk}-G_{ij}G_{ik})=0.$$ To a cycle of length $l+1$ corresponds an element containing as a factor the product $$G_{i_1i_2}G_{i_2i_3}\ldots G_{i_li_{l+1}}G_{i_{l+1}i_1}=G_{i_1i_2}\ldots G_{i_{l-1}i_l}(G_{i_1i_l}G_{i_1i_{l+1}}-G_{i_1i_l} G_{i_li_{l+1}}),$$
both terms having associated graphs with cycles of length $l$.

\end{proof}

If we restrict the correspondence $\{\mbox{monomials in }E_*^*(X,n)\}\ra \{\mbox{marked graphs}\}$ to the canonical (Bezrukavnikov) basis,
we obtain only marked monotonic graphs:
\begin{defn}
A tree with vertices $\{1\leq i_1<i_2<\ldots<i_p\leq n\}$ is \emph{monotonic} if, for any vertex $i_k$, the unique path from $i_1$ to $i_k$ is strictly increasing: \\
\begin{picture}(120,50)
\put(50,0){             \put(70,30){$i_1 <i_a<i_b<\ldots< i_k$}  \put(70,15){$i_1$}  \put(93,15){$i_a$}
\put(118,15){$i_b$}     \put(160,15){$i_k$}                      \multiput(70.5,11.5)(23,0){2}{\line(1,0){23}}
\multiput(70,10.3)(23,0){3}{$\centerdot$}                       \multiput(160,10.3)(23,0){1}{$\centerdot$}
\put(133,10){$\ldots$}} \put(198,10.3){$\centerdot$}            \put(199,11.5){\line(1,0){12}}
\end{picture}
\\(choosing the root $i_1$, the rooted tree is monotonic).
A forest with vertices $\{1,2,\ldots, n \}$ is \emph{monotonic} if all its trees are  monotonic.
\end{defn}
\begin{example}The tree \\
\begin{picture}(120,20)
\multiput(40,2.8)(23,0){3}{$\centerdot$}        \multiput(40.5,4)(23,0){2}{\line(1,0){23}}   \put(40,8){$3$}
\put(63,8){$2$}          \put(85,8){$5$}        \put(100,8){$\mbox{is monotonic but}$}
\end{picture}
\begin{picture}(120,20)
\multiput(60,2.8)(23,0){3}{$\centerdot$}        \multiput(60.5,4)(23,0){2}{\line(1,0){23}}
\put(60,8){$3$}          \put(83,8){$5$}        \put(103,8){$2$}                            \put(118,8){$\mbox{is not.}$}
\end{picture}
\end{example}
\begin{rem}
There is one to one correspondence
$$\{\mbox{monomials in the canonical basis of $E_*^*(X,n)$}\}\leftrightarrow\{\mbox{marked monotonic forests}\}.$$
\end{rem}
\begin{proof} Let us suppose that the graph $\Gamma$ associated to a canonical monomial $G_{I_*J_*}$ ($j_1<j_2<\ldots<j_q,\,i_a<j_a$ for $a=1,2,\ldots, q$) is connected; from its Euler characteristic we find that $card (I_*\cup J_*)=q+1$. If $\Gamma$ is not monotonic, there is a path $i_1-\ldots-j-k-h$ such that $(i_1\leq )j<k>h$, and this corresponds to a forbidden product $G_{jk}G_{hk}$ in $G_{I_*J_*}.$ Conversely, to any monotonic tree (or forest) corresponds a product $G_{I_*J_*}$ from the canonical basis: a vertex $j$, distinct from the minimal vertex $i$ in the same connected component, is joined with a unique vertex $h$, and this is smaller than $j$, namely the second last vertex on the path from $i$ to $j$; therefore $j$ appears only once on the second position, hence in the sequence $J_*$.
\end{proof}

There is an obvious action of the symmetric group $\mc{S}_n$ on the set of marked graphs: the natural action of $\mc{S}_n$ on the set of vertices $\{1,2,\ldots,n\}$ induces an action on the set of edges and an action on the connected components and the corresponding marks.
The set of monotonic marked forests is not $\mc{S}_n$-stable, like the set of monomials in the canonical basis of the Kri\v{z} model; but
$E_*^*(X,n)$ and the $\mathbb{Q}$ vector space generated by marked monotonic forests are $\mc{S}_n$-stable and these two vector spaces will be identified.
\begin{example}In this example and the next one two $\mc{S}_4$-orbits in the $\mathbb{Q}$-span of monotonic marked forests are described: $\oplus$ and $\ominus$ stand for the sum and difference in this vector space and $\tau_i=(i,i+1)$ $(i=1,2,3)$ are the Coxeter generators of $\mc{S}_4$. To save space, the bullet $\bullet$ corresponds to the vertex 1, the root of the tree, and the vertices connected to 1 are, from left to right, written in increasing order; hence

\begin{picture}(360,60)
\put(78,38.5){$\bullet$}                      \put(80,40){\line(-1,-1){10}}     \put(80,40){\line(1,-1){10}}   \put(90,30){\line(0,-1){10}}
\multiput(68.2,28.8)(20,0){2}{$\centerdot$}   \put(88.2,18.8){$\centerdot$}
\put(135,25){$\mbox{is the short form of}$}   \put(278,38.5){$\bullet$}         \put(280,40){\line(-1,-1){10}} \put(280,40){\line(1,-1){10}}
\put(290,30){\line(0,-1){10}}                 \multiput(268.2,28.8)(20,0){2}{$\centerdot$}                     \put(288.2,18.8){$\centerdot$}
\begin{scriptsize} \put(279,47){$1$}  \put(262,28){$2$}  \put(293,28){$3$}  \put(293,18){$4$}  \end{scriptsize}
\end{picture}

\noindent But the next rooted tree is ambiguous
\begin{picture}(70,25)
\put(33,10.5){$\bullet$}                      \put(35,12){\line(-1,-1){10}}     \put(35,12){\line(1,-1){10}}   \put(25,0.5){\line(0,-1){10}}
\multiput(23.2,0.8)(20,0){2}{$\centerdot$}   \put(23.2,-10.2){$\centerdot$}
\end{picture}
therefore we will write

\begin{picture}(360,70)
\put(38,48.5){$\bullet$}                      \put(40,50){\line(-1,-1){10}}     \put(40,50){\line(1,-1){10}}   \put(30,40){\line(0,-1){10}}
\multiput(28.2,38.8)(20,0){2}{$\centerdot$}   \put(28.2,28.8){$\centerdot$}
\put(78,48.5){$\bullet$}                      \put(80,50){\line(-1,-1){10}}     \put(80,50){\line(1,-1){10}}   \put(70,40){\line(0,-1){10}}
\multiput(68.2,38.8)(20,0){2}{$\centerdot$}   \put(68.2,28.8){$\centerdot$}
\put(278,48.5){$\bullet$}                     \put(280,50){\line(-1,-1){10}}    \put(280,50){\line(1,-1){10}}  \put(270,40){\line(0,-1){10}}
\multiput(268.2,38.8)(20,0){2}{$\centerdot$}  \put(268.2,28.8){$\centerdot$}
\put(238,48.5){$\bullet$}                     \put(240,50){\line(-1,-1){10}}    \put(240,50){\line(1,-1){10}}  \put(230,40){\line(0,-1){10}}
\multiput(228.2,38.8)(20,0){2}{$\centerdot$}  \put(228.2,28.8){$\centerdot$}
\put(115,35){$\mbox{for the short form of}$}  \put(305,35){$\mbox{respectively.}$}
\begin{scriptsize}   \put(279,57){$1$}        \put(239,57){$1$}                  \put(262,38){$2$}             \put(222,38){$2$}
\put(293,38){$3$}    \put(253,38){$4$}        \put(262,28){$4$}                  \put(222,28){$3$}
\put(52,38){$4$}     \put(92,38){$3$}         \end{scriptsize}
\end{picture}

\noindent For the same reason the (unique) mark $x_h\in \mc{B}$ is omitted. \\
\begin{center}
\begin{picture}(200,100)
\put(0,0){     \put(-42,68){$\bullet$}   \put(-40,70){\line(-1,-1){15}}                \put(-40,70){\line(0,-1){14.5}}
\put(-40,70){\line(1,-1){15}}            \multiput(-56.15,55)(15,0){3}{$\centerdot$}   \put(-20,60){$\stackrel{\tau_1}{\leftrightarrow}$}
\put(0,75){$\bullet$}                    \put(2.2,78){\line(0,-1){12}}                 \put(1,65){$\centerdot$}
\put(2.4,65){\line(-1,-1){10}}           \put(2.4,65){\line(1,-1){10}}                 \multiput(-8.5,54)(19.2,0){2}{$\centerdot$}
\put(18,60) {$\stackrel{\tau_2}{\leftrightarrow}$}                                     \put(35,76.5){$\bullet$}
\put(37.5,78){\line(0,-1){7}}            \put(36.2,68){$\centerdot$}                   \put(37.5,71){\line(0,-1){9}}
\put(36.2,60){$\centerdot$}              \put(37.5,63){\line(0,-1){9}}                 \put(36.2,52){$\centerdot$}
\put(48,60){$\ominus$}                   \put(68,75){$\bullet$}                        \put(70.5,76){\line(-1,-1){10}}
\put(70.5,76){\line(1,-1){10}}           \multiput(59.1,65)(19.2,0){2}{$\centerdot$}   \put(79.7,65){\line(0,-1){10}}
\put(78.3,53){$\centerdot$}              \put(90,60) {$\stackrel{\tau_3}{\leftrightarrow}$}
\put(73,0){   \put(35,76.5){$\bullet$}   \put(37.5,78){\line(0,-1){7}}                 \put(36.2,68){$\centerdot$}
\put(37.5,71){\line(0,-1){9}}            \put(36.2,60){$\centerdot$}                   \put(37.5,63){\line(0,-1){9}}
\put(36.2,52){$\centerdot$} }            \put(121,60){$\ominus$}                       \put(75,0){  \put(68,75){$\bullet$}
\put(70.5,76){\line(-1,-1){10}}          \put(70.5,76){\line(1,-1){10}}                \multiput(59.1,65)(19.2,0){2}{$\centerdot$}
\put(79.7,65){\line(0,-1){10}}           \put(78.3,53){$\centerdot$}   }               \put(160,60){$\oplus$}
\put(222,0){   \put(-42,68){$\bullet$}   \put(-40,70){\line(-1,-1){15}}                \put(-40,70){\line(0,-1){14.5}}
\put(-40,70){\line(1,-1){15}}            \multiput(-56.15,55)(15,0){3}{$\centerdot$} } \put(200,60){$\ominus$}
\put(220,0){   \put(0,75){$\bullet$}     \put(2.2,78){\line(0,-1){12}}                 \put(1,65){$\centerdot$}
\put(2.4,65){\line(-1,-1){10}}           \put(2.4,65){\line(1,-1){10}}                 \multiput(-8.5,54)(19.2,0){2}{$\centerdot$}  }  }
\put(0,6){   \put(0,-60){                \put(-42,68){$\bullet$}                       \put(-40,70){\line(-1,-1){15}}
\put(-40,70){\line(0,-1){14.5}}          \put(-40,70){\line(1,-1){15}}                 \multiput(-56.15,55)(15,0){3}{$\centerdot$} }
\put(5,-60){         \put(-23,60){$\stackrel{\tau_1}{\leftrightarrow}$}                \put(0,75){$\bullet$}
\put(2.2,78){\line(0,-1){12}}            \put(1,65){$\centerdot$}                      \put(2.4,65){\line(-1,-1){10}}
\put(2.4,65){\line(1,-1){10}}            \multiput(-8.5,54)(19.2,0){2}{$\centerdot$} } \put(20,0) {$\stackrel{\tau_2}{\leftrightarrow}$}
\put(7,-60){\put(26,60){$\ominus$}       \put(35,76.5){$\bullet$}                      \put(37.5,78){\line(0,-1){7}}
\put(36.2,68){$\centerdot$}              \put(37.5,71){\line(0,-1){9}}                 \put(36.2,60){$\centerdot$}
\put(37.5,63){\line(0,-1){9}}            \put(36.2,52){$\centerdot$}                   \put(47,60){$\oplus$}
\put(68,75){$\bullet$}                   \put(70.5,76){\line(-1,-1){10}}               \put(70.5,76){\line(1,-1){10}}
\multiput(59.1,65)(19.2,0){2}{$\centerdot$} \put(79.7,65){\line(0,-1){10}}             \put(78.3,53){$\centerdot$} }
\put(94,0) {$\stackrel{\tau_3}{\leftrightarrow}$}   \put(-59,0){$\ominus$}             \put(-6,0){$\ominus$}
\put(109,0){$\ominus$}                   \put(149,0){$\oplus$}                         \put(188,0){$\ominus$}
\put(214,0){$\oplus$}                    \put(170,-60){  \put(35,76.5){$\bullet$}      \put(37.5,78){\line(0,-1){7}}
\put(36.2,68){$\centerdot$}              \put(37.5,71){\line(0,-1){9}}                 \put(36.2,60){$\centerdot$}
\put(37.5,63){\line(0,-1){9}}            \put(36.2,52){$\centerdot$}                   \put(68,75){$\bullet$}
\put(70.5,76){\line(-1,-1){10}}          \put(70.5,76){\line(1,-1){10}}                \multiput(59.1,65)(19.2,0){2}{$\centerdot$}
\put(79.7,65){\line(0,-1){10}}           \put(78.3,53){$\centerdot$}                   \put(-42,68){$\bullet$}
\put(-40,70){\line(-1,-1){15}}           \put(-40,70){\line(0,-1){14.5}}               \put(-40,70){\line(1,-1){15}}
\multiput(-56.15,55)(15,0){3}{$\centerdot$}    \put(0,75){$\bullet$}                   \put(2.2,78){\line(0,-1){12}}
\put(1,65){$\centerdot$}                 \put(2.4,65){\line(-1,-1){10}}                \put(2.4,65){\line(1,-1){10}}
\multiput(-8.5,54)(19.2,0){2}{$\centerdot$}}    \put(100,-25){                         \put(-138,55){$\updownarrow$}
\put(-147,55){$\updownarrow$}            \put(-131,54){$\tau_3$}                       \put(-159,54){$\tau_2$}
\put(-98,55){$\updownarrow$}             \put(-92,54){$\tau_3$}}                       \put(300,-25){ \put(-138,55){$\updownarrow$}
\put(-147,55){$\updownarrow$}            \put(-131,54){$\tau_2$}                       \put(-159,54){$\tau_1$}
\put(-250,54){$\tau_1$}                  \put(-241,55){$\updownarrow$} }}
\end{picture}
\end{center}
In the first example we have a ``small" orbit: the dimension of the representation is $3$ and it is isomorphic to $V(2,1,1)$. In the second
one we will see a ``complete" orbit: the dimension is $6$ and the representation is $V(3,1)\oplus V(2,1,1)$.
\end{example}

\begin{example}
\mbox{}
\begin{center}
\begin{picture}(300,330)   
\put(40,170){ \put(-5,45){ \put(-108,0){     \put(68,75){$\bullet$}                    \put(70.5,76){\line(-1,-1){10}}
\put(70.5,76){\line(1,-1){10}}           \multiput(59.1,65)(19.2,0){2}{$\centerdot$}   \put(79.7,65){\line(0,-1){10}}
\put(78.3,53){$\centerdot$} }            \put(43,0){  \put(-42,68){$\bullet$}          \put(-40,70){\line(-1,-1){15}}
\put(-40,70){\line(0,-1){14.5}}          \put(-40,70){\line(1,-1){15}}                 \multiput(-56.15,55)(15,0){3}{$\centerdot$} }
\put(47,0){  \put(0,75){$\bullet$}       \put(2.2,78){\line(0,-1){12}}                 \put(1,65){$\centerdot$}
\put(2.4,65){\line(-1,-1){10}}           \put(2.4,65){\line(1,-1){10}}                 \multiput(-8.5,54)(19.2,0){2}{$\centerdot$} }
\put(52,0){ \put(35,76.5){$\bullet$}     \put(37.5,78){\line(0,-1){7}}                 \put(36.2,68){$\centerdot$}
\put(37.5,71){\line(0,-1){9}}            \put(36.2,60){$\centerdot$}                   \put(37.5,63){\line(0,-1){9}}
\put(36.2,52){$\centerdot$}}             \put(170,0){ \put(-42,68){$\bullet$}          \put(-40,70){\line(-1,-1){15}}
\put(-40,70){\line(0,-1){14.5}}          \put(-40,70){\line(1,-1){15}}                 \multiput(-56.15,55)(15,0){3}{$\centerdot$}}
\put(167,0){ \put(0,75){$\bullet$}       \put(2.2,78){\line(0,-1){12}}                 \put(1,65){$\centerdot$}
\put(2.4,65){\line(-1,-1){10}}           \put(2.4,65){\line(1,-1){10}}                 \multiput(-8.5,54)(19.2,0){2}{$\centerdot$}}
\put(140,0){ \put(68,75){$\bullet$}      \put(70.5,76){\line(-1,-1){10}}               \put(70.5,76){\line(1,-1){10}}
\multiput(59.1,65)(19.2,0){2}{$\centerdot$}   \put(79.7,65){\line(0,-1){10}}           \put(78.3,53){$\centerdot$} }
\put(213,0){ \put(35,76.5){$\bullet$}    \put(37.5,78){\line(0,-1){7}}                 \put(36.2,68){$\centerdot$}
\put(37.5,71){\line(0,-1){9}}            \put(36.2,60){$\centerdot$}                   \put(37.5,63){\line(0,-1){9}}
\put(36.2,52){$\centerdot$}}}            
\multiput(-45,85)(47,0){2}{$\ominus$}    \multiput(46,85)(40,0){6}{$\ominus$}     \put(-25,85){$\stackrel{\tau_2}{\leftrightarrow}$}
\put(20,85){$\stackrel{\tau_1}{\leftrightarrow}$}                                 \put(65,85){$\stackrel{\tau_2}{\leftrightarrow}$}
\put(145,85){$\stackrel{\tau_1}{\leftrightarrow}$}                                \put(225,85){$\stackrel{\tau_2}{\leftrightarrow}$}
\put(100,105){$\oplus$}                  \put(185,105){$\oplus$}                  
\put(3,-3){ \put(-42,68){$\bullet$}      \put(-40,70){\line(-1,-1){15}}                \put(-40,70){\line(0,-1){14.5}}
\put(-40,70){\line(1,-1){15}}            \multiput(-56.15,55)(15,0){3}{$\centerdot$}   \put(-63,0){ \put(68,75){$\bullet$}
\put(70.5,76){\line(-1,-1){10}}          \put(70.5,76){\line(1,-1){10}}                \multiput(59.1,65)(19.2,0){2}{$\centerdot$}
\put(60.5,65){\line(0,-1){10}}           \put(59.1,53){$\centerdot$}                   \put(82.5,63){\scriptsize{$3$}} }
\put(-20,0){ \put(68,75){$\bullet$}      \put(70.5,76){\line(-1,-1){10}}               \put(70.5,76){\line(1,-1){10}}
\multiput(59.1,65)(19.2,0){2}{$\centerdot$}  \put(60.5,65){\line(0,-1){10}}            \put(59.1,53){$\centerdot$}
\put(82.5,63){\scriptsize{$4$}} }        \put(20,0){  
\put(68,75){$\bullet$}                   \put(70.5,76){\line(-1,-1){10}}               \put(70.5,76){\line(1,-1){10}}
\multiput(59.1,65)(19.2,0){2}{$\centerdot$}  \put(79.7,65){\line(0,-1){10}}            \put(78.3,53){$\centerdot$} }
\put(60,0){ \put(68,75){$\bullet$}       \put(70.5,76){\line(-1,-1){10}}               \put(70.5,76){\line(1,-1){10}}
\multiput(59.1,65)(19.2,0){2}{$\centerdot$}  \put(60.5,65){\line(0,-1){10}}            \put(59.1,53){$\centerdot$}
\put(82.5,63){\scriptsize{$4$}} }        \put(100,0){ \put(68,75){$\bullet$}           \put(70.5,76){\line(-1,-1){10}}
\put(70.5,76){\line(1,-1){10}}           \multiput(59.1,65)(19.2,0){2}{$\centerdot$}   \put(60.5,65){\line(0,-1){10}}
\put(59.1,53){$\centerdot$}              \put(82.5,63){\scriptsize{$3$}} }             \put(173,0){ \put(35,76.5){$\bullet$}
\put(37.5,78){\line(0,-1){7}}            \put(36.2,68){$\centerdot$}                   \put(37.5,71){\line(0,-1){9}}
\put(36.2,60){$\centerdot$}              \put(37.5,63){\line(0,-1){9}}                 \put(36.2,52){$\centerdot$} }
\put(247,0){ \put(0,75){$\bullet$}       \put(2.2,78){\line(0,-1){12}}                 \put(1,65){$\centerdot$}
\put(2.4,65){\line(-1,-1){10}}           \put(2.4,65){\line(1,-1){10}}                 \multiput(-8.5,54)(19.2,0){2}{$\centerdot$} }
\multiput(-45,40)(287,0){2}{$\updownarrow$}   \multiput(-40,38)(287,0){2}{$\tau_3$}
\multiput(5,57)(55,0){2}{\vector(3,-1){100}}  \multiput(5,57)(55,0){2}{\vector(-3,1){1}}
\multiput(45,45)(55,0){2}{$\tau_3$}           
\multiput(-5,-50)(0,-50){2}{             \put(-110,0){  \put(68,75){$\bullet$}         \put(70.5,76){\line(-1,-1){10}}
\put(70.5,76){\line(1,-1){10}}           \multiput(59.1,65)(19.2,0){2}{$\centerdot$}   \put(79.7,65){\line(0,-1){10}}
\put(78.3,53){$\centerdot$} }            \put(-61,0){ \put(68,75){$\bullet$}           \put(70.5,76){\line(-1,-1){10}}
\put(70.5,76){\line(1,-1){10}}           \multiput(59.1,65)(19.2,0){2}{$\centerdot$}   \put(60.5,65){\line(0,-1){10}}
\put(59.1,53){$\centerdot$}              \put(82.5,63){\scriptsize{$3$}}}              \put(-17,0){\put(68,75){$\bullet$}
\put(70.5,76){\line(-1,-1){10}}          \put(70.5,76){\line(1,-1){10}}                \multiput(59.1,65)(19.2,0){2}{$\centerdot$}
\put(60.5,65){\line(0,-1){10}}           \put(59.1,53){$\centerdot$}                   \put(82.5,63){\scriptsize{$4$}}}
\put(28,0){ \put(68,75){$\bullet$}       \put(70.5,76){\line(-1,-1){10}}               \put(70.5,76){\line(1,-1){10}}
\multiput(59.1,65)(19.2,0){2}{$\centerdot$}   \put(60.5,65){\line(0,-1){10}}           \put(59.1,53){$\centerdot$}
\put(82.5,63){\scriptsize{$4$}}}         \put(177,0){ \put(-42,68){$\bullet$}          \put(-40,70){\line(-1,-1){15}}
\put(-40,70){\line(0,-1){14.5}}          \put(-40,70){\line(1,-1){15}}                 \multiput(-56.15,55)(15,0){3}{$\centerdot$}}
\put(107,0){ \put(68,75){$\bullet$}      \put(70.5,76){\line(-1,-1){10}}               \put(70.5,76){\line(1,-1){10}}
\multiput(59.1,65)(19.2,0){2}{$\centerdot$}   \put(60.5,65){\line(0,-1){10}}           \put(59.1,53){$\centerdot$}
\put(82.5,63){\scriptsize{$3$}} }        \put(210,0){ \put(0,75){$\bullet$}            \put(2.2,78){\line(0,-1){12}}
\put(1,65){$\centerdot$}                 \put(2.4,65){\line(-1,-1){10}}                \put(2.4,65){\line(1,-1){10}}
\multiput(-8.5,54)(19.2,0){2}{$\centerdot$} } \put(210,0){ \put(35,76.5){$\bullet$}    \put(37.5,78){\line(0,-1){7}}
\put(36.2,68){$\centerdot$}              \put(37.5,71){\line(0,-1){9}}                 \put(36.2,60){$\centerdot$}
\put(37.5,63){\line(0,-1){9}}            \put(36.2,52){$\centerdot$} }}                
\multiput(105,5)(82,0){2}{$\ominus$}     \multiput(22,5)(128,0){2}{$\stackrel{\tau_1}{\leftrightarrow}$}
\put(66,5){$\stackrel{\tau_2}{\leftrightarrow}$}                                       
\multiput(-35,-3)(50,0){2}{\vector(4,-3){30}}  \multiput(-35,-3)(50,0){2}{\vector(-4,3){1}}
\multiput(-10,-3)(50,0){2}{\vector(-1,-1){25}} \multiput(-10,-3)(50,0){2}{\vector(1,1){2}}
\put(210,-3){\vector(4,-3){30}}          \put(210,-3){\vector(-4,3){1}}                \put(237,-3){\vector(-1,-1){25}}
\put(237,-3){\vector(1,1){2}}            \put(-24,-7){$\tau_2$}  \put(25,-7){$\tau_3$} \put(221,-7){$\tau_2$}
\multiput(-48,-18)(48,0){2}{$\ominus$}   \put(45,-18){$\ominus$} \put(239,-16){$\ominus$}  
\multiput(22,-43)(123,0){2}{$\stackrel{\tau_1}{\leftrightarrow}$}             \put(64,-43){$\stackrel{\tau_2}{\leftrightarrow}$}
\put(74,-40){$\ominus$}               \put(112,-40){$\oplus$} \put(189,-40){$\oplus$} \put(153,-36){$\ominus$}  
\put(-5,-155){ \put(-108,0){             \put(68,75){$\bullet$}                       \put(70.5,76){\line(-1,-1){10}}
\put(70.5,76){\line(1,-1){10}}           \multiput(59.1,65)(19.2,0){2}{$\centerdot$}  \put(79.7,65){\line(0,-1){10}}
\put(78.3,53){$\centerdot$} }            \put(43,0){  \put(-42,68){$\bullet$}         \put(-40,70){\line(-1,-1){15}}
\put(-40,70){\line(0,-1){14.5}}          \put(-40,70){\line(1,-1){15}}                \multiput(-56.15,55)(15,0){3}{$\centerdot$} }
\put(47,0){ \put(0,75){$\bullet$}        \put(2.2,78){\line(0,-1){12}}                \put(1,65){$\centerdot$}
\put(2.4,65){\line(-1,-1){10}}           \put(2.4,65){\line(1,-1){10}}                \multiput(-8.5,54)(19.2,0){2}{$\centerdot$} }
\put(52,0){ \put(35,76.5){$\bullet$}     \put(37.5,78){\line(0,-1){7}}                \put(36.2,68){$\centerdot$}
\put(37.5,71){\line(0,-1){9}}            \put(36.2,60){$\centerdot$}                  \put(37.5,63){\line(0,-1){9}}
\put(36.2,52){$\centerdot$} }            \put(170,0){ \put(-42,68){$\bullet$}         \put(-40,70){\line(-1,-1){15}}
\put(-40,70){\line(0,-1){14.5}}          \put(-40,70){\line(1,-1){15}}                \multiput(-56.15,55)(15,0){3}{$\centerdot$} }
\put(167,0){ \put(0,75){$\bullet$}       \put(2.2,78){\line(0,-1){12}}                \put(1,65){$\centerdot$}
\put(2.4,65){\line(-1,-1){10}}           \put(2.4,65){\line(1,-1){10}}                \multiput(-8.5,54)(19.2,0){2}{$\centerdot$} }
\put(140,0){ \put(68,75){$\bullet$}      \put(70.5,76){\line(-1,-1){10}}              \put(70.5,76){\line(1,-1){10}}
\multiput(59.1,65)(19.2,0){2}{$\centerdot$}  \put(79.7,65){\line(0,-1){10}}           \put(78.3,53){$\centerdot$} }
\put(213,0){ \put(35,76.5){$\bullet$}    \put(37.5,78){\line(0,-1){7}}                \put(36.2,68){$\centerdot$}
\put(37.5,71){\line(0,-1){9}}            \put(36.2,60){$\centerdot$}                  \put(37.5,63){\line(0,-1){9}}
\put(36.2,52){$\centerdot$} } }          \put(0,-200){ \multiput(-45,85)(47,0){2}{$\oplus$}
\multiput(46,85)(40,0){6}{$\oplus$}      \put(-25,85){$\stackrel{\tau_2}{\leftrightarrow}$}
\put(20,85){$\stackrel{\tau_1}{\leftrightarrow}$}            \put(65,85){$\stackrel{\tau_2}{\leftrightarrow}$}
\put(145,85){$\stackrel{\tau_1}{\leftrightarrow}$}           \put(225,85){$\stackrel{\tau_2}{\leftrightarrow}$}
\multiput(100,105)(-25,0){2}{$\ominus$}  \put(150,105){$\ominus$}                     \put(185,105){$\ominus$}
\multiput(-46,128)(86,0){2}{$\ominus$}   \put(-7,120){$\ominus$}                      \put(240,127){$\ominus$} }
\multiput(-47,-59)(287,0){2}{$\updownarrow$}                 \multiput(-56,-58)(287,0){2}{$\tau_3$}
\multiput(105,-48)(60,0){2}{\vector(-4,-1){105}}             \multiput(105,-48)(60,0){2}{\vector(4,1){1}}
\multiput(52,-57)(60,0){2}{$\tau_3$}      
\put(3,-202){  \put(-42,68){$\bullet$}   \put(-40,70){\line(-1,-1){15}}               \put(-40,70){\line(0,-1){14.5}}
\put(-40,70){\line(1,-1){15}}            \multiput(-56.15,55)(15,0){3}{$\centerdot$}  \put(-63,0){ \put(68,75){$\bullet$}
\put(70.5,76){\line(-1,-1){10}}          \put(70.5,76){\line(1,-1){10}}               \multiput(59.1,65)(19.2,0){2}{$\centerdot$}
\put(60.5,65){\line(0,-1){10}}           \put(59.1,53){$\centerdot$}                  \put(82.5,63){\scriptsize{$3$}} }
\put(-20,0){ \put(68,75){$\bullet$}      \put(70.5,76){\line(-1,-1){10}}              \put(70.5,76){\line(1,-1){10}}
\multiput(59.1,65)(19.2,0){2}{$\centerdot$}   \put(60.5,65){\line(0,-1){10}}          \put(59.1,53){$\centerdot$}
\put(82.5,63){\scriptsize{$4$}} }        \put(20,0){                                  
\put(68,75){$\bullet$}                   \put(70.5,76){\line(-1,-1){10}}              \put(70.5,76){\line(1,-1){10}}
\multiput(59.1,65)(19.2,0){2}{$\centerdot$}  \put(79.7,65){\line(0,-1){10}}           \put(78.3,53){$\centerdot$}}
\put(60,0){ \put(68,75){$\bullet$}       \put(70.5,76){\line(-1,-1){10}}              \put(70.5,76){\line(1,-1){10}}
\multiput(59.1,65)(19.2,0){2}{$\centerdot$}  \put(60.5,65){\line(0,-1){10}}           \put(59.1,53){$\centerdot$}
\put(82.5,63){\scriptsize{$4$}} }        \put(100,0){ \put(68,75){$\bullet$}          \put(70.5,76){\line(-1,-1){10}}
\put(70.5,76){\line(1,-1){10}}           \multiput(59.1,65)(19.2,0){2}{$\centerdot$}  \put(60.5,65){\line(0,-1){10}}
\put(59.1,53){$\centerdot$}              \put(82.5,63){\scriptsize{$3$}}  }           \put(173,0){ \put(35,76.5){$\bullet$}
\put(37.5,78){\line(0,-1){7}}            \put(36.2,68){$\centerdot$}                  \put(37.5,71){\line(0,-1){9}}
\put(36.2,60){$\centerdot$}              \put(37.5,63){\line(0,-1){9}}                \put(36.2,52){$\centerdot$} }
\put(247,0){ \put(0,75){$\bullet$}       \put(2.2,78){\line(0,-1){12}}                \put(1,65){$\centerdot$}
\put(2.4,65){\line(-1,-1){10}}           \put(2.4,65){\line(1,-1){10}}                \multiput(-8.5,54)(19.2,0){2}{$\centerdot$} } } }
\multiput(-70,83)(340,0){2}{\framebox(10,10){$\tau_1$}}      \put(-60,85){$\ra$}      \put(260,85){$\leftarrow$}
\multiput(-70,10)(340,0){2}{\circle{12}} \multiput(-73,8)(340,0){2}{$\tau_1$}         \put(-64,7){$\ra$}
\put(255,7){$\leftarrow$}                \multiput(-70,-40)(340,0){2}{\circle{15}}    \multiput(-73,-42)(340,0){2}{$\tau_1'$}
\put(-64,-43){$\ra$}                     \put(253,-43){$\leftarrow$}                  \multiput(-70,-123)(340,0){2}{\framebox(10,10){$\tau_1'$}}
\put(-60,-121){$\ra$}                    \put(260,-121){$\leftarrow$}                 \put(100,134){\framebox(10,10){$\tau_3$}}
\put(104,126){$\downarrow$}              \put(180,134){\framebox(10,10){$\tau_3'$}}   \put(184,126){$\downarrow$}
\put(112,-178){\framebox(10,10){$\tau_3'$}} \put(115,-165){$\uparrow$}                \put(192,-178){\framebox(10,10){$\tau_3$}}
\put(195,-165){$\uparrow$} }
\end{picture}
\end{center}
\end{example}

\mbox{}

\noindent The second example suggests the next definition

\begin{defn}\label{def 2.6}
We say that two monotonic marked forests are of \emph{the same type}, $(\Gamma,H_*)\thicksim({\Gamma}',H'_*)$, if there is
a permutation $\sigma\in \mc{S}_n$ which induces a bijection between the connected components of the two graphs, preserving
the number of elements of the corresponding components and their marks.
\end{defn}
It is clear that marked monotonic forests in the same $\mc{S}_n$-orbit are of the same type, but not conversely. A complete system
of representatives for this equivalence relation is given by forests of bamboos:

\begin{picture}(360,60)
\put(5,30){$(\Gamma_{L_*},H_*):$}            \multiput(70,30)(15,0){2}{$\centerdot$}      \multiput(160,30)(70,0){1}{$\centerdot$}
\put(185,30){$\centerdot$}                   \multiput(220,30)(30,0){1}{$\centerdot$}     \multiput(130,30)(30,0){1}{$\centerdot$}
\put(235,30){$\ldots$}                       \put(105,25){$\ldots$}                       \put(195,25){$\ldots$}                       \multiput(265,30)(70,0){2}{$\centerdot$}     \multiput(70.5,31.5)(30,0){2}{\line(1,0){30}}\put(160.5,31.5){\line(1,0){60}} 
\put(265.5,31.5){\line(1,0){70}}             \put(70,45){$x_{h_1}$}                       \put(160,45){$x_{h_2}$}
\put(265,45){$x_{h_t}$}                      \put(285,30){$\centerdot$}                   \put(300,25){$\ldots$}
\begin{scriptsize}
\put(70,20){1}     \put(85,20){2}            \put(128,20){$L_1$}  \put(155,20){$L_1+1$}   \put(215,20){$L_2$}
\put(255,20){$L_{t-1}+1$}                    \put(332,20){$L_t$}
\end{scriptsize}
\end{picture}

\noindent where the sequence of lengths of the bamboos $L_*=(\l_1,\l_2,\ldots,\l_t)$ is decreasing $\l_1\geq \l_2\geq\ldots\geq \l_t$,
$L_i=\l_1+\l_2+\dots +\l_i$, $L_t=n$, and, for equal lengths, the marks are in a decreasing order. We split the $\mc{S}_n$-modules $E_q^k(X,n)$
into smaller pieces using the type of the associated monotonic marked forests.
\begin{defn}\label{def 1}
For a given marked forest of bamboos $(\Gamma_{L_*},H_*)$ we define \emph{the subspace of type} $(L_*,H_*)$ as the linear span of monomials with the associated marked graph of type $(\Gamma_{L_*},H_*)$; this will be denoted by $E_*^*(L_*,H_*)$.
\end{defn}
\begin{example}
For the type $(\Gamma_{(3,1,1)},(h_1,h_2,h_3))$, where $x_{h_2}\succ x_{h_3}$,

\begin{picture}(360,60)
\put(10,30){$(\Gamma_{L_*},H_*):$}            \multiput(100,30)(30,0){5}{$\centerdot$}      \put(98,18){$1$}
\put(128,18){$2$}                             \put(158,18){$3$}                             \put(188,18){$4$}
\put(218,18){$5$}                             \multiput(100.5,31.5)(30,0){2}{\line(1,0){30}}  \put(100,40){$x_{h_1}$}
\put(186,40){$x_{h_2}$}                       \put(216,40){$x_{h_3}$}
\end{picture}
the associated space $E_*^*(L_*,H_*)$ is of dimension 40 and its canonical basis is given by the monomials
\[
\begin{array}{cc}
  x_{h_1}\o1\o1\o x_{h_2}\o x_{h_3}G_{12}G_{13}, & x_{h_1}\o1\o1\o x_{h_3}\o x_{h_2}G_{12}G_{13}, \\
  x_{h_1}\o1\o1\o x_{h_2}\o x_{h_3}G_{12}G_{23}, & x_{h_1}\o1\o1\o x_{h_3}\o x_{h_2}G_{12}G_{23}, \\
  \ldots & \ldots \\
  x_{h_2}\o x_{h_3}\o x_{h_1}\o1\o1G_{34}G_{45}, & x_{h_3}\o x_{h_2}\o x_{h_1}\o1\o1G_{34}G_{45}.
\end{array}
\]
If in this example $x_{h_2}=x_{h_3}$, the dimension of $E_*^*(L_*,H_*)$ is 20.
\end{example}
To a given type $(L_*,H_*)$, $L_*=(\l_1,\dots,\l_t)$, $H_*=(h_1,\dots,h_t)$, we will associate two integers:
$$|L_*|=\mathop{\sum}\limits_{i=1}^{t}(\l_i-1)\,\mbox{ and }\,|H_*|=\mathop{\sum}\limits_{h=1}^t\deg (x_h).$$

\begin{theorem}\label{thm2.8}
The bihomogenous components $E_q^k(X,n)$ can be decomposed into a direct sum of monogenic $\mc{S}_n$-submodules
$$E_q^k(X,n)=\mathop{\mathop{\bigoplus}\limits_{|L_*|=q}}\limits_{|H_*|=k-q(2m-1)}E_q^k(L_*,H_*)$$

In particular, the multiplicities of the irreducible $\mc{S}_n$-submodules of each term
$E_q^k(L_*,H_*)=\mathop{\bigoplus}\limits_{\lambda\vdash n}m_{\lambda}V(\lambda)$ satisfy the
relations $m_{\lambda}\leq \dim V(\lambda)$.
\end{theorem}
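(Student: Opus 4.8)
The plan is to read the vector-space decomposition off the canonical basis, to derive $\mc{S}_n$-stability of each summand from the three defining relations of the Kri\v{z} algebra, to exhibit the forest-of-bamboos monomial as an explicit generator of each summand, and to obtain the multiplicity bound from the embedding of a monogenic $\mathbb{Q}[\mc{S}_n]$-module into the regular representation. For the decomposition itself: every monomial $x_{H_*}G_{I_*J_*}$ of the canonical basis of $E_q^k(X,n)$ is a monotonic marked forest, hence has a well-defined type $(L_*,H_*)$ in the sense of Definition \ref{def 2.6}, and the marked forests of bamboos $(\Gamma_{L_*},H_*)$ form a complete list of representatives; so the canonical basis of $E_q^k(X,n)$ is partitioned by type and $E_q^k(X,n)=\bigoplus E_q^k(L_*,H_*)$ as vector spaces. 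It then remains only to see which types occur. If $L_*=(\l_1,\dots,\l_t)$, the associated forest has $t$ connected components on $n$ vertices, hence $n-t=\sum_i(\l_i-1)=|L_*|$ edges, and the number of edges is the exterior degree $q$; and in the canonical form exactly one basis mark $x_{h_i}\in\mc{B}$ sits on each component, so the total degree is $\sum_i\deg x_{h_i}+q(2m-1)=|H_*|+q(2m-1)=k$. This is exactly the index set $|L_*|=q$, $|H_*|=k-q(2m-1)$.

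Next, $\mc{S}_n$-stability of each summand. Fix a canonical monomial $\mu$ of type $(L_*,H_*)$ and $\s\in\mc{S}_n$. Then $\s(\mu)$ is again a marked forest of type $(L_*,H_*)$ — the vertices are relabelled, so connected components pass to components of the same sizes carrying the same marks — though it need no longer be monotonic. To rewrite $\s(\mu)$ in the canonical basis one uses only the relations $G_{ji}=G_{ij}$, $p_j^*(x)G_{ij}=p_i^*(x)G_{ij}$ and $G_{ik}G_{jk}=G_{ij}G_{jk}-G_{ij}G_{ik}$, and none of these alters the type: the first is cosmetic, the second only transports a mark inside a fixed component, and the third trades one spanning tree of a component for two others on the same vertex set with the same marks. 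Hence $\s(\mu)$ is a linear combination of canonical monomials of type $(L_*,H_*)$, so $E_q^k(L_*,H_*)$ is $\mc{S}_n$-stable.

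For the monogenicity I would take $v_0$ to be the monomial of the marked forest of bamboos $(\Gamma_{L_*},H_*)$ itself and prove $E_q^k(L_*,H_*)=\mathbb{Q}[\mc{S}_n]v_0$. Let $\mu$ be any canonical monomial of type $(L_*,H_*)$, supported on a partition of $\{1,\dots,n\}$ into blocks $B_1,\dots,B_t$ carrying the prescribed marks; choose $\rho\in\mc{S}_n$ carrying the bamboo blocks of $\Gamma_{L_*}$ onto $B_1,\dots,B_t$ compatibly with sizes and marks, so that $\rho(v_0)$ is, up to sign and the cost-free transport $p_j^*(x)G_{ij}=p_i^*(x)G_{ij}$ of each mark to the least vertex of its block, the forest which is a line (a path) on each $B_i$. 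The subgroup $\prod_i\mathrm{Sym}(B_i)$ acts, and re-canonicalizes, one block at a time, so it would suffice to know that for a single block $B=\{b_1<\dots<b_\l\}$ the $\mathrm{Sym}(B)$-orbit span of the line $G_{b_1b_2}G_{b_2b_3}\cdots G_{b_{\l-1}b_\l}$ on $B$ equals the whole space of monotonic marked trees on $B$; granting that, $\mu\in\mathbb{Q}[\prod_i\mathrm{Sym}(B_i)]\cdot\rho(v_0)\subseteq\mathbb{Q}[\mc{S}_n]v_0$. Since the single mark on a block rides passively on its least vertex and is transported for free, this single-block statement is exactly the assertion that the top exterior-degree component $\mc{A}(\l)_{\l-1}$ of the Arnold algebra is monogenic, generated by the line monomial, and I expect this to be the only substantive point of the proof. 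It is the combinatorial incarnation of the Jacobi identity — relation $(3)$ being precisely the straightening rule — so the argument should be an induction resolving the branch vertices of an arbitrary monotonic tree monomial one at a time (passing through non-monotonic trees and re-canonicalizing at the end), expressing it in terms of the monomials $\s(G_{b_1b_2}\cdots G_{b_{\l-1}b_\l})$, $\s\in\mc{S}_\l$; the delicate step will be to choose, at each stage, the pair of edges at a branch vertex to which relation $(3)$ is applied so that the procedure provably terminates. Alternatively, one may quote \cite{LeSo}, where $\mc{A}(\l)_{\l-1}$ is identified with a representation induced from a one-dimensional character of the cyclic subgroup generated by an $\l$-cycle, and is therefore manifestly monogenic.

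Finally, the ``in particular'' is formal: a monogenic $\mathbb{Q}[\mc{S}_n]$-module is a quotient, hence — by semisimplicity — isomorphic to a submodule, of the left regular module $\mathbb{Q}[\mc{S}_n]$, in which $V(\l)$ occurs with multiplicity $\dim V(\l)$; so in $E_q^k(L_*,H_*)=\bigoplus_\l m_\l V(\l)$ one has $m_\l\le\dim V(\l)$. Over $\mathbb{Q}$, where the irreducible $\mc{S}_n$-modules stay irreducible, this inequality is in fact equivalent to monogenicity, so one could equally well establish the bound first and deduce the existence of a generator from it.
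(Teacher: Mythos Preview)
Your proposal is correct and follows essentially the same route as the paper: direct-sum decomposition read off the canonical basis, $\mc{S}_n$-stability because the three relations preserve the type of a marked forest, monogenicity via the bamboo monomial, and the multiplicity bound from the surjection $\mathbb{Q}[\mc{S}_n]\twoheadrightarrow E(L_*,H_*)$.

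Two small points of comparison. First, the ``only substantive point'' you correctly isolate---that the line monomial generates the space of monotonic trees on a single block---is precisely what the paper spends its effort on, and it does so by an explicit double induction (on the number of vertices $n$, and then on the distance from the vertex $n$ to the end of the bamboo) rather than by invoking \cite{LeSo}. Your sketch of ``resolving branch vertices one at a time'' is the right picture, but the paper's organizing device---first permute inside $\mc{S}_{n-1}$ so that the neighbour of $n$ is as large as possible, then apply $\tau_j$ to push it one step further---is exactly the termination argument you anticipate needing. Second, the paper in fact proves the stronger statement that the bamboo on a single block of size $\ell$ already generates under $\mc{S}_{\ell-1}$ (fixing the last vertex), not merely under $\mc{S}_\ell$; this sharpening is what feeds the Corollary immediately following the theorem, where $\mathrm{Res}^{\mc{S}_n}_{\mc{S}_{n-1}}E_{n-1}^*(n,h)\cong\mathbb{Q}[\mc{S}_{n-1}]$. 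Your Lehrer--Solomon shortcut gives $\mc{S}_\ell$-monogenicity but not this refinement.
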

\begin{proof}
The $\mc{S}_n$-module $E_q^k(X,n)$ is the direct sum $\bigoplus E_q^k(L_*,H_*)$ by the very definition of monomials of type $(L_*,H_*)$. We will show that:

a) for any type $(L_*,H_*)$, the vector space $E(L_*,H_*)$ is $\mc{S}_n$-stable;

b) a generator of the $\mc{S}_n$-module $E_*^*(L_*,H_*)$ is the monomial corresponding to the marked monotonic bamboo $(L_*,H_*)$:
$$\mu_{(L_*,H_*)}=p^*_1(x_{h_1})p^*_{L_1+1}(x_{h_2})\ldots p^*_{L_{t-1}+1}(x_{h_t})\overline{G_{1L_1}}\cdot \overline{G_{L_1+1,L_2}}\cdot \ldots \cdot \overline{G_{L_{t-1}+1,L_t}}$$
(here $\overline{G_{ab}}=G_{a,a+1}G_{a+1,a+2}\ldots G_{b-1,b}$).

The last claim of the theorem is a consequence of the $\mc{S}_n$-equivariant surjection
$$\mathbb{Q}[\mc{S}_n]\ra E(L_*,H_*),\,\,\,\,\, \sigma\mapsto \sigma\cdot \mu_{(L_*,H_*)}.$$

To prove a) it is enough to consider the action of the transpositions $\tau_i=(i,i+1)$ on the tree $T$ containing the vertices $i,i+1$ or on the disjoint union of  two trees, $T'$ and $T''$, containing $i$ and $i+1$ respectively. In the case of one tree, the transform $\tau_i T$ is again a monotonic tree (and $\tau_i T\sim T$) if the path from 1 to $i+1$ does not contain $i$. Otherwise the monotonic path $1-\ldots - h-i-(i+1)$ is transformed into $1-\ldots - h-(i+1)-i$ and the corresponding factor $G_{h,i+1}G_{i,i+1}$ should be replaced by $G_{h,i}G_{i,i+1}-G_{h,i}G_{h,i+1}$. The resulting monomials have monotonic trees of the same type with $T$. The case of two trees, $i$ in $T'$ and $i+1$ in $T''$, is simpler: $\tau_i(T'\sqcup T'')$ is a union of two monotonic trees ($h<i<j$ is equivalent to $h<i+1<j$) and obviously this union is of the same type with $T'\sqcup T''$.

It is enough to prove b) for a monotonic tree: we will show by induction on $n$ that the bamboo $B_n=\mu_{(L_*=(n),H_*=(x_h))}$ generate the module $\mbox{Res}_{\mc{S}_{n-1}}^{\mc{S}_n}E_*^*(X,n)$. If $\sigma$ is a permutation in $\mc{S}_{n-1}$, we denote by $\tilde{\sigma}$ its extension to $\mc{S}_n$: $\tilde{\sigma}(n)=n.$ In the case $n=3$ there is a unique monotonic tree which is not the monotonic bamboo $B_3$, but this belongs to the $\mc{S}_2$-orbit of $B_3$: \\
\begin{picture}(120,20)
\multiput(40,2.8)(23,0){3}{$\centerdot$}      \multiput(40.5,4)(23,0){2}{\line(1,0){23}}    \put(40,8){$2$}
\put(63,8){$1$}          \put(85,8){$3$}      \put(100,8){$=\tau_1($}                       \put(95,0){
\multiput(40,2.8)(23,0){3}{$\centerdot$}      \multiput(40.5,4)(23,0){2}{\line(1,0){23}}    \put(40,8){$1$}
\put(63,8){$2$}          \put(85,8){$3$}      \put(100,8){ $)= \tau_1(B_3)$  }              \put(155,8){and $\tau_1\in \mc{S}_2$.} }
\end{picture} \\
Let us consider a monotonic tree $T_n$ with $n$ vertices and its monotonic subtree $T_{n-1}=T_n\setminus \{n\}$ (by monotonicity, the vertex $n$ is connected to a unique other vertex, $h$). From the set of permutations $\pi\in \mc{S}_{n-1}$ with the property that $\pi(T_{n-1})$ is still monotonic we choose one such that $\pi(h)=j$ is maximal. Now we start a second induction on $n-j$, the number of vertices of $\pi(T_n)$ lying on the branches starting from $j$. If this number is equal to 1, then $j=n-1$ and, by induction on $n$ there are permutations $\sigma_a$ in $\mc{S}_{n-2}$ and constants $c_a\in \mathbb{Q}$ such that $$\pi(T_{n-1})=\mathop{\sum}\limits_{a}c_a\sigma_a(B_{n-1}).$$ The extension of a permutation $\tilde{\sigma}$ of a permutation in $\mc{S}_{n-2}$ does not change the edge $(n-1)-n$ and we find $\tilde{\pi}(T_n)= \mathop{\sum}\limits_{a}c_a\tilde{\sigma}_a(B_{n})$ and therefore $$T_n=\mathop{\sum}\limits_{a}c_a\widetilde{\pi^{-1}\sigma}_a(B_{n}),\,\mbox{where }\,\pi^{-1}\sigma_a\in\mc{S}_{n-1}.$$
If $j$ is less than $n-1$, then $j+1$ is connected to $j$ (by the maximality condition); applying the transposition $\tau_j$, we obtain a non-monotonic tree $\tau_j\pi(T_n)$ (if $j\neq1$). The sequence $k-(j+1)-j$ should be replaced: we obtain a difference of monotonic trees $T_n'-T_n'',$ in each of them $n$ is connected with $j+1$. The second induction will give two expansions $$T_n'=\mathop{\sum}\limits_{p}c_p'\tilde{\sigma}_p'(B_{n}),\, \,\,\,\,T_n''=\mathop{\sum}\limits_{q}c_q''\tilde{\sigma}_q''(B_{n})$$ with $\sigma_p',\,\sigma_q''\in\mc{S}_{n-1}$. Therefore $T_n$ is in $\mathbb{Q}[\mc{S}_{n-1}](B_{n})$: $$T_n=\mathop{\sum}\limits_{p}c_p'\widetilde{\pi^{-1}\tau_j\sigma_p'}(B_{n})-\mathop{\sum}\limits_{q}c_q''\widetilde{\pi^{-1}\tau_j\sigma_q''}(B_{n}).$$
In the case $j=1$ the transposition $\tau_1$ transforms $\pi(T_n)$ into a monotonic tree where $n$ is connected with 2 and we can apply the second induction.
\end{proof}
\begin{cor}
The $\mc{S}_{n-1}$ orbit of the monomial $x_{h}\o1\o\ldots 1G_{12}G_{23}\ldots G_{n-1,n} $ coincides with $E_{n-1}^{(n-1)(2m-1)+|x_h|}(n,h) $ and
$$  Res_{\mc{S}_{n-1}}^{\mc{S}_n}E_{n-1}^{(n-1)(2m-1)+|x_h|}(n,h)\cong \mathbb{Q}[\mc{S}_{n-1}]. $$
\end{cor}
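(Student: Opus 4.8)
The plan is to deduce both assertions from part b) of Theorem \ref{thm2.8} together with one elementary dimension count. First I would identify the monomial $x_h\o1\o\ldots\o1\,G_{12}G_{23}\ldots G_{n-1,n}$ as the generating bamboo $\mu_{(L_*,H_*)}$ for the type $L_*=(n)$, $H_*=(x_h)$, and observe that every monomial of a fixed type $(L_*,H_*)$ sits in the same bihomogeneous component, namely $q=|L_*|$ and $k=q(2m-1)+|H_*|$; here $q=n-1$ and $k=(n-1)(2m-1)+|x_h|$. Consequently the type-subspace of Definition \ref{def 1} satisfies $E_*^*(n,h)=E_{n-1}^{(n-1)(2m-1)+|x_h|}(n,h)$. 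The first claim of the corollary — that the $\mathbb{Q}$-span of the $\mc{S}_{n-1}$-orbit of the monomial is this whole space — is then exactly what the induction argument in the proof of part b) of Theorem \ref{thm2.8} establishes, where it is shown that $B_n=\mu_{(L_*=(n),H_*=(x_h))}$ generates $\mbox{Res}_{\mc{S}_{n-1}}^{\mc{S}_n}E_*^*(n,h)$ as a $\mathbb{Q}[\mc{S}_{n-1}]$-module; I would simply quote this.

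For the isomorphism with the regular representation, I would restrict the $\mc{S}_n$-equivariant surjection $\mathbb{Q}[\mc{S}_n]\ra E_*^*(n,h)$, $\s\mapsto\s\cdot B_n$, of Theorem \ref{thm2.8} to $\mc{S}_{n-1}$, obtaining an $\mc{S}_{n-1}$-equivariant map $\varphi:\mathbb{Q}[\mc{S}_{n-1}]\ra E_*^*(n,h)$ which is surjective by the previous paragraph. It then remains to check that $\varphi$ is injective, for which it suffices to show $\dim E_{n-1}^{(n-1)(2m-1)+|x_h|}(n,h)=(n-1)!$. By the one-to-one correspondence between canonical monomials and marked monotonic forests, and since a forest with $n$ vertices and $q=n-1$ edges is a single tree, this dimension equals the number of monotonic trees on the vertex set $\{1,\ldots,n\}$: the unique connected component carries the mark $x_h$ at its smallest vertex $1$, which is forced, so each such tree contributes exactly one basis element. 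A monotonic tree on $\{1,\ldots,n\}$ is the same as a choice, for each $j\in\{2,\ldots,n\}$, of a ``parent'' $f(j)\in\{1,\ldots,j-1\}$ (iterating $f$ strictly decreases and so produces the increasing path from $1$ to $j$), whence there are $1\cdot2\cdots(n-1)=(n-1)!$ of them. Thus $\dim E_{n-1}^{(n-1)(2m-1)+|x_h|}(n,h)=(n-1)!=\dim\mathbb{Q}[\mc{S}_{n-1}]$ and $\varphi$ is an isomorphism of $\mc{S}_{n-1}$-modules.

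I do not expect a serious obstacle: surjectivity is already in hand from Theorem \ref{thm2.8}, and the rest is the counting argument. The point to be careful about is that $\varphi$ need not send a general $\s\in\mc{S}_{n-1}$ to a canonical monomial (for instance $\tau_2 B_4$ is a non-monotonic tree and must be rewritten), so one cannot conclude injectivity merely from the images being distinct basis elements; the clean route is the dimension comparison via the parent-function bijection. The only other thing worth spelling out is the identification $E_*^*(n,h)=E_{n-1}^{(n-1)(2m-1)+|x_h|}(n,h)$, which is immediate once one notes that the bidegree is constant on each type.
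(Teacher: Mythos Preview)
Your proposal is correct and follows essentially the same approach as the paper's own proof: both quote the surjectivity of $\mathbb{Q}[\mc{S}_{n-1}]\to E_*^*(n,h)$ established in the proof of Theorem~\ref{thm2.8}~b), and then match dimensions by counting monotonic trees on $\{1,\ldots,n\}$ as $(n-1)!$. Your write-up is simply more explicit than the paper's two-line version---you spell out the parent-function bijection for the count and flag the subtlety that $\s\cdot B_n$ need not be a canonical monomial---but the underlying argument is identical.
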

\begin{proof}
In the proof of the theorem we obtained a surjective $\mc{S}_{n-1}$-map
$$ \mathbb{Q}[\mc{S}_{n-1}]\rightarrow Res_{\mc{S}_{n-1}}^{\mc{S}_n}E_{n-1}^{(n-1)(2m-1)+|x_h|}(n,h), \quad \s \mapsto \s\cdot B_n. $$
On the other hand the number of monotonic trees with $n$ vertices is $(n-1)!$.
\end{proof}

\section{The $\mc{S}_n$-module $E_q^k(L_*,H_*)$}\label{section3}

In this section we will study the symmetric structure of the modules $E_q^k(L_*,H_*)$ using two methods. We extend the main result of Lehrer and Solomon \cite{LeSo} and we will give a general formula, but in an implicit form; next, in some particular cases, we will present explicit computations of the irreducible components and their multiplicities, using direct methods.

The symmetric structure of the bottom horizontal line is completely elementary: the types are given
by $L_*=(1^{(n)})$, $H_*=(h_1^{(m_1)}, h_2^{(m_2)}, \ldots, h_t^{(m_t)})$

\begin{picture}(20,50)
\put(55,22){\multiput(0,0)(30,0){2}{$\centerdot$}    \put(40,0){$\ldots$}                           \put(80,0){$\ldots$}
\put(120,0){$\ldots$}                                \multiput(150,0)(30,0){2}{$\centerdot$}
\begin{scriptsize}
\put(0,-10){1}            \put(30,-10){2}            \put(150,-10){$n-1$}                           \put(180,-10){$n$}
\put(0,10){$x_{h_1}$}     \put(30,10){$x_{h_1}$}     \put(150,10){$x_{h_t}$}                        \put(180,10){$x_{h_t}$}
\put(0,-12){$\mathop{\underbrace{\,\,\,\,\,\,\,\,\,\,\,\,\,\,\,\,\,\,\,\,\,\,\,\,\,\,\,\,\,\,\,\,}}\limits_{m_1}$}
\put(135,-12){$\mathop{\underbrace{\,\,\,\,\,\,\,\,\,\,\,\,\,\,\,\,\,\,\,\,\,\,\,\,\,\,\,\,\,\,\,\,\,\,\,\,\,}}\limits_{m_t}$}
\end{scriptsize}}
\end{picture}\\
(here $m_1,m_2,\ldots,m_t$ are multiplicities of the elements $x_{h_1}\succ x_{h_2}\succ\ldots\succ x_{h_t}$ in $\mc{B}$ and $m_1+\ldots+m_t=n$).

\begin{example}
In the case of distinct marks (all the multiplicities $m_i$ are equal to 1) we obtain the largest possible ``type" submodule:
$$E_0^{|H_*|}(1^{(n)},(h_1,h_2,\ldots,h_n))\cong \mathbb{Q}[\mc{S}_n].$$
\end{example}
\begin{proof}
As $\sigma^{-1}(x_{1}\o\ldots\o x_{n})=\pm x_{\sigma(1)}\o\ldots\o x_{\sigma(n)}$, the character of this module is given by $\chi_{E_0^{|H_*|}}(\sigma)=\left\{ \begin{array}{cc}
                                      n! & \sigma=id \\
                                      0 & \sigma\neq id.
                                    \end{array}
\right.$
\end{proof}
\begin{example}
In the case of a unique mark ($m_1=n$)
$$E_0^{n|x_h|}(1^{(n)},h^{(n)})\cong\left\{\begin{array}{lc}
                                       V(n) & \mbox{if $|x_h|$ is even or }n=1 \\
                                       V(1,1,\ldots,1) & \mbox{if $|x_h|$ is odd and }n\geq 2.
                                     \end{array}
\right.$$
\end{example}
\begin{proof}
This is the consequence of the relation $$\tau_i(x_h\o\ldots\o x_h)= (-1)^{|x_h|}(x_h\o\ldots\o x_h).$$
\end{proof}
For the general type corresponding to the discrete graph we will use the notation $$V^{\epsilon(m,h)}=\left\{ \begin{array}{ll}
                                            V(m) & \mbox{if $m=1$ or $|x_h|$ is even} \\
                                            V(1,1,\ldots,1) & \mbox{if $m\geq 2$ and $|x_h|$ is odd.}
                                          \end{array}
                                          \right.$$
\begin{prop}\label{prop3.1}
The $\mc{S}_n$ structure of the type $(L_*,H_*)$, where $L_*=(1^{(n)})$, $H_*=(h_1^{(m_1)}, h_2^{(m_2)},\ldots,h_t^{(m_t)})$ is given by $$E_0^{|H_*|}(1^{(n)},H_*)\cong\mbox{Ind}^{\mc{S}_n}_{\mc{S}_{m_1}\times \mc{S}_{m_2}\times\ldots\times \mc{S}_{m_t}}
V^{\e(m_1,h_1)}\o V^{\e(m_2,h_2)}\o \ldots\o V^{\e(m_t,h_t)}.$$
\end{prop}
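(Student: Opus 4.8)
The plan is to realize the type module $E_0^{|H_*|}(1^{(n)},H_*)$ explicitly as an induced representation by exhibiting a natural basis indexed by cosets. Recall that, by Theorem \ref{thm2.8} (case $q=0$), the space $E_0^{|H_*|}(1^{(n)},H_*)$ is the linear span of the monomials $x_{\pi(1)}\o x_{\pi(2)}\o\cdots\o x_{\pi(n)}$ obtained from the bamboo representative $\mu_{(L_*,H_*)}=x_{h_1^{(m_1)}}\o\cdots\o x_{h_t^{(m_t)}}$ (here $L_*=(1^{(n)})$ means there are no exterior generators at all, so these are just elements of $H^{*\otimes n}$) by permuting the tensor factors; since the monomials in $\mc{B}$ squared do not enter, this is precisely the induced module structure we want to pin down.

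First I would set $W=\mc{S}_{m_1}\times\mc{S}_{m_2}\times\cdots\times\mc{S}_{m_t}$, viewed as the Young subgroup stabilizing the ordered partition of $\{1,\ldots,n\}$ into the first $m_1$ indices, the next $m_2$ indices, and so on. On the one-dimensional-per-block space spanned by $\mu_{(L_*,H_*)}$, the subgroup $\mc{S}_{m_i}$ acts by $\tau(x_{h_i}\o\cdots\o x_{h_i})=(-1)^{|x_{h_i}|}(x_{h_i}\o\cdots\o x_{h_i})$ for each Coxeter transposition $\tau$ inside that block (as in the proof of the second Example above), so $W$ acts on the line $\mathbb{Q}\cdot\mu_{(L_*,H_*)}$ exactly through the character $\bigotimes_i\operatorname{sgn}^{|x_{h_i}|}$, which is the restriction to $W$ of $V^{\e(m_1,h_1)}\o\cdots\o V^{\e(m_t,h_t)}$ (for $m_i=1$ the block is trivial, and for $m_i\ge2$ the character $\operatorname{sgn}^{|x_{h_i}|}$ is the trivial or sign character of $\mc{S}_{m_i}$ according to the parity of $|x_{h_i}|$, matching $V^{\e(m_i,h_i)}$). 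Hence there is a well-defined $\mc{S}_n$-equivariant map $\operatorname{Ind}_W^{\mc{S}_n}\big(V^{\e(m_1,h_1)}\o\cdots\o V^{\e(m_t,h_t)}\big)\to E_0^{|H_*|}(1^{(n)},H_*)$ sending the generator $1\otimes_W v$ to $\mu_{(L_*,H_*)}$.

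Next I would check this map is an isomorphism by a dimension count. The left-hand side has dimension $[\mc{S}_n:W]=\binom{n}{m_1,\ldots,m_t}=\frac{n!}{m_1!\cdots m_t!}$, with basis the $W$-coset representatives $\{\sigma W\}$. On the right, a canonical basis monomial $x_{j_1}\o\cdots\o x_{j_n}$ of type $(1^{(n)},H_*)$ is determined by the multiset of positions carrying each $x_{h_i}$, i.e. by a set partition of $\{1,\ldots,n\}$ into blocks of sizes $m_1,\ldots,m_t$ labelled by the distinct marks $x_{h_1}\succ\cdots\succ x_{h_t}$, and there are exactly $\frac{n!}{m_1!\cdots m_t!}$ of these. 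It remains to verify that the equivariant map carries the coset $\sigma W$ to $\pm$ (the monomial determined by $\sigma$) and that distinct cosets give distinct monomials up to sign — this is immediate because $\sigma\mu_{(L_*,H_*)}$ depends only on which block of indices each mark is sent to, i.e. only on $\sigma W$, and the correspondence $\sigma W\mapsto$ (the associated labelled set partition) is a bijection onto the canonical basis. Therefore the map is a surjection between spaces of equal finite dimension, hence an isomorphism.

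I do not expect a genuine obstacle here; the only point needing a little care is the sign bookkeeping — one must be sure the signs $(-1)^{|x_{h_i}|}$ from permuting odd-degree classes within a block are exactly absorbed by the sign characters in $V^{\e(m_i,h_i)}$, and that no extra signs appear when moving a mark from one block to another (they do not, since such a transposition of tensor factors carrying \emph{distinct} marks introduces the Koszul sign $(-1)^{|x_{h_i}||x_{h_j}|}$ only when both are odd, and one then also needs the generator $v$ of the induced module to be chosen compatibly — alternatively one can simply record this as the definition of the induced module's action and note both sides satisfy the same relations). Once this is settled, comparing characters (or just dimensions and the universal property of induction) finishes the proof.
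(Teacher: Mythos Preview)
Your proposal is correct and follows essentially the same approach as the paper: identify the Young subgroup $W=\mc{S}_{m_1}\times\cdots\times\mc{S}_{m_t}$ as the stabilizer of the line $\mathbb{Q}\mu_{(L_*,H_*)}$, compute that $W$ acts on this line via the character $\bigotimes_i V^{\e(m_i,h_i)}$, and conclude by induction. The paper phrases this as ``$\mc{S}_n$ acts transitively on the set of 1-dimensional subspaces, the stabilizer of the chosen line is $W$ with the stated character, hence the module is the induced representation''; your version makes the equivariant map and dimension count explicit, but the content is identical.

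One small remark: your final paragraph about Koszul signs when transposing factors from \emph{different} blocks is unnecessary worry. Once you have checked $w\cdot\mu_{(L_*,H_*)}=\chi(w)\mu_{(L_*,H_*)}$ for $w\in W$, the map $\mathrm{Ind}_W^{\mc{S}_n}\chi\to E_0^{|H_*|}(1^{(n)},H_*)$, $\sigma\otimes v\mapsto\sigma\cdot\mu_{(L_*,H_*)}$, is well-defined and $\mc{S}_n$-equivariant by the universal property of induction; any further Koszul signs live entirely inside the target action and are carried along automatically. The paper's phrasing in terms of a transitive action on \emph{lines} (rather than vectors) sidesteps this bookkeeping altogether.
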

\begin{proof}
The symmetric group $\mc{S}_n$ acts transitively on the set of 1-dimensional spaces $\mathbb{Q}\lan x_1\o \ldots\o x_n\ran$, where $m_1$ factors (on different positions) coincide with $x_{h_1}$, ..., $m_t$ factors coincide with $x_{h_t}$. The subgroup leaving invariant the 1-dimensional subspace $\mathop{\underbrace{x_{h_1}\o \ldots\o x_{h_1} }}\limits_{m_1}\o\ldots\o \mathop{\underbrace{x_{h_t}\o \ldots\o x_{h_t} }}\limits_{m_t}$ is the direct product $\mc{S}_{m_1}\times \mc{S}_{m_2}\times\ldots\times \mc{S}_{m_t}$ (with the obvious notation: $\mc{S}_{m_1}$ acts on the subset $\{1,2,\ldots,m_1\}$, $\mc{S}_{m_2}$ acts on the subset $\{m_1+1, m_1+2,\ldots,m_1+m_2\}$, and so on) and the corresponding representation of this subgroup is $V^{\e(m_1,h_1)}\o V^{\e(m_2,h_2)}\o \ldots\o V^{\e(m_t,h_t)}$. General facts from the theory of induced representations (see for instance \cite{Se}, chapter 7) imply the result.
\end{proof}
On the next horizontal line the types are given by a unique graph:\\

\begin{picture}(20,40)
\put(55,25){\multiput(0,0)(30,0){4}{$\centerdot$}   \put(2,1){\line(1,0){28}}    \put(120,0){$\ldots$}
\multiput(180,0)(30,0){2}{$\centerdot$}
\begin{scriptsize}
\put(0,-10){1}              \put(30,-10){2}         \put(60,-10){3}              \put(90,-10){4}
\put(180,-10){$n-1$}        \put(210,-10){$n$}      \put(0,10){$x_{h_1}$}        \put(60,10){$x_{h_2}$}
\put(90,10){$x_{h_2}$}      \put(180,10){$x_{h_t}$} \put(210,10){$x_{h_t}$}
\put(60,-12){$\mathop{\underbrace{\,\,\,\,\,\,\,\,\,\,\,\,\,\,\,\,\,\,\,\,\,\,\,\,\,\,}}\limits_{m_2}$}
\put(170,-12){$\mathop{\underbrace{\,\,\,\,\,\,\,\,\,\,\,\,\,\,\,\,\,\,\,\,\,\,\,\,\,\,\,\,\,\,\,}}\limits_{m_t}$}
\end{scriptsize}}
\end{picture}\\
(here $x_{h_2}\succ\ldots\succ x_{h_t}$, but $h_1$ could be equal to one of $h_2,\ldots,h_t$ and we have $2+m_2+\ldots+m_t=n$).

\begin{prop}\label{prop3.2}
The $\mc{S}_n$ structure corresponding to the type $L_*=(2,1^{(n-2)}), $ $H_*=(h_1; h_2^{(m_2)},\ldots,h_t^{(m_t)})$ is given by
$$E_1^{2m-1+|H_*|}(L_*,H_*)\cong\mbox{Ind}^{\mc{S}_n}_{\mc{S}_2\times \mc{S}_{m_2}\times\ldots\times
\mc{S}_{m_t}} V(2)\o V^{\e(m_2,h_2)}\o \ldots\o V^{\e(m_t,h_t)}.$$

\end{prop}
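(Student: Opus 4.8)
The plan is to identify $E_1^{2m-1+|H_*|}(L_*,H_*)$ with a suitable induced representation by exactly the same transitivity-plus-stabilizer argument used for Proposition \ref{prop3.1}, but now with one bamboo of length $2$ carrying the mark $x_{h_1}$. Concretely, I would first observe that, by Theorem \ref{thm2.8}, the space $E_1^{2m-1+|H_*|}(L_*,H_*)$ is monogenic, generated by $\mu_{(L_*,H_*)} = p_1^*(x_{h_1})p_3^*(x_{h_2})\cdots G_{12}$ (the marked monotonic forest consisting of the edge $\{1,2\}$ marked $x_{h_1}$ together with the isolated marked vertices $3,\dots,n$). So it suffices to understand the $\mc{S}_n$-orbit of this generator.

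Next I would describe the orbit set-theoretically. A monomial in $E_1^*$ of this type is a marked monotonic forest with a single edge $\{i,j\}$ ($i<j$) carrying mark $x_{h_1}$ on its component and, on the remaining $n-2$ vertices, the marks $x_{h_2}$ (with multiplicity $m_2$), ..., $x_{h_t}$ (multiplicity $m_t$). Since the relation $p_j^*(x)G_{ij}=p_i^*(x)G_{ij}$ means the mark on an edge-component is unordered, such a monomial is determined by the unordered pair $\{i,j\}$ together with a function assigning to the other vertices a mark from $\{x_{h_2},\dots,x_{h_t}\}$ with the prescribed multiplicities; thus the basis monomials of $E_1^{2m-1+|H_*|}(L_*,H_*)$ are in bijection with such configurations, and $\mc{S}_n$ acts transitively on them (any configuration can be carried to the standard one by a permutation). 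Therefore $E_1^{2m-1+|H_*|}(L_*,H_*)$, up to sign discrepancies, is a transitive permutation-type module, and by the theory of induced representations (see \cite{Se}, Ch.~7) it is $\mbox{Ind}_{K}^{\mc{S}_n}$ of the one-dimensional (up to sign) representation of the stabilizer $K$ of the standard configuration. The stabilizer is visibly $\mc{S}_2\times\mc{S}_{m_2}\times\cdots\times\mc{S}_{m_t}$, where the $\mc{S}_2$ permutes $\{1,2\}$ and each $\mc{S}_{m_i}$ permutes the block of vertices marked $x_{h_i}$.

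The one remaining point — and the only place any genuine computation enters — is to pin down the character of $K$ on the generating line $\mathbb{Q}\langle \mu_{(L_*,H_*)}\rangle$. On each block of isolated vertices marked $x_{h_i}$ the transposition acts by $(-1)^{|x_{h_i}|}$ exactly as in Proposition \ref{prop3.1}, contributing the factors $V^{\e(m_i,h_i)}$. The new feature is the $\mc{S}_2$ on $\{1,2\}$: here the nontrivial transposition $\tau_1$ sends $p_1^*(x_{h_1})G_{12}$ to $p_2^*(x_{h_1})G_{21} = p_2^*(x_{h_1})G_{12} = p_1^*(x_{h_1})G_{12}$, using $G_{21}=G_{12}$ and the mark-relation; so $\tau_1$ fixes the generator and the $\mc{S}_2$-factor is the trivial representation $V(2)$, \emph{independently of the parity of $|x_{h_1}|$} (the degree of $x_{h_1}$ is absorbed by moving it across the edge, which is why $h_1$ may coincide with one of the $h_i$ without affecting the answer). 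This is the small subtlety to highlight. Assembling: $K$ acts on the generator by $V(2)\o V^{\e(m_2,h_2)}\o\cdots\o V^{\e(m_t,h_t)}$, and the monogenic surjection $\mathbb{Q}[\mc{S}_n]\to E_1^{2m-1+|H_*|}(L_*,H_*)$ from Theorem \ref{thm2.8} together with the dimension count (the orbit has size $[\mc{S}_n:K]$) shows it is an isomorphism, giving the claimed induced module.
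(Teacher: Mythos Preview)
Your proof is correct and follows essentially the same approach as the paper: decompose $E_*^*(L_*,H_*)$ as a direct sum of one-dimensional subspaces permuted transitively by $\mc{S}_n$, identify the stabilizer of the standard line as $\mc{S}_2\times\mc{S}_{m_2}\times\cdots\times\mc{S}_{m_t}$, and read off the character on that line. Your explicit verification that $\tau_1$ fixes $p_1^*(x_{h_1})G_{12}$ via $G_{21}=G_{12}$ and the relation $p_2^*(x)G_{12}=p_1^*(x)G_{12}$, hence giving $V(2)$ independently of the parity of $|x_{h_1}|$, is a useful detail that the paper leaves implicit.
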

\begin{proof}
This is similar to the previous proof: the module $E_*^*(L_*,H_*)$ is the direct sum of 1-dimensional subspaces
$\mathbb{Q}\lan x_1\o \ldots\o x_nG_{ij}\ran$, where on the $i$-th position lies $x_{h_1}$, on the $j$-th position
is 1, and $x_{h_2}$ is lying on $m_2$ (arbitrary) positions, $\ldots,x_{h_t}$ on $m_t$ positions, and these subspaces
are permuted by $\mc{S}_n$. The subgroup leaving invariant the line
$\mathbb{Q}\lan x_{h_1}\o1\o\mathop{\underbrace{x_{h_2}\o x_{h_2}\o\ldots}}\limits_{m_2}\ldots\, \mathop{\underbrace{\ldots\o x_{h_t}}}\limits_{m_t}G_{12}\ran$ is the product $\mc{S}_2\times \mc{S}_{m_2}\times\ldots\times \mc{S}_{m_t}$ and the representation on this line is equivalent with
$ V(2)\o V^{\e(m_2,h_2)}\o \ldots\o V^{\e(m_t,h_t)}$, hence the result.
\end{proof}
The following types appear on the third horizontal line:\\

a) $L_*=(3,1^{(n-3)}), $ $H_*=(h_1,h_2^{(m_2)},\ldots,h_t^{(m_t)})$

\begin{picture}(20,40)
\put(55,20){\multiput(0,0)(30,0){5}{$\centerdot$}    \put(2,1){\line(1,0){58}}    \put(150,0){$\ldots$}
\multiput(180,0)(30,0){2}{$\centerdot$}
\begin{scriptsize}
\put(0,-10){1}              \put(30,-10){2}          \put(60,-10){3}              \put(90,-10){4}
\put(120,-10){5}            \put(180,-10){$n-1$}     \put(210,-10){$n$}           \put(0,10){$x_{h_1}$}
\put(90,10){$x_{h_2}$}      \put(120,10){$x_{h_2}$}  \put(180,10){$x_{h_t}$}      \put(210,10){$x_{h_t}$}
\put(90,-12){$\mathop{\underbrace{\,\,\,\,\,\,\,\,\,\,\,\,\,\,\,\,\,\,\,\,\,\,\,\,\,\,\,\,\,\,}}\limits_{m_2}$}
\put(170,-12){$\mathop{\underbrace{\,\,\,\,\,\,\,\,\,\,\,\,\,\,\,\,\,\,\,\,\,\,\,\,\,\,\,\,\,\,\,}}\limits_{m_t}$}
\end{scriptsize}}
\end{picture}\\

b) $L_*=(2^{(2)},1^{(n-4)}), $ $H_*=(h_1^{(2)},h_2^{(m_2)},\ldots,h_t^{(m_t)})$

\begin{picture}(20,40)
\put(55,20){\multiput(0,0)(30,0){6}{$\centerdot$}    \multiput(2,1)(60,0){2}{\line(1,0){28}}        \put(160,0){$\ldots$}
\multiput(180,0)(30,0){2}{$\centerdot$}
\begin{scriptsize}
\put(0,-10){1}              \put(30,-10){2}          \put(60,-10){3}         \put(90,-10){4}        \put(120,-10){5}
\put(150,-10){6}            \put(180,-10){$n-1$}     \put(210,-10){$n$}      \put(0,10){$x_{h_1}$}  \put(60,10){$x_{h_1}$}
\put(120,10){$x_{h_2}$}     \put(150,10){$x_{h_2}$}  \put(180,10){$x_{h_t}$} \put(210,10){$x_{h_t}$}
\put(120,-12){$\mathop{\underbrace{\,\,\,\,\,\,\,\,\,\,\,\,\,\,\,\,\,\,\,\,\,\,\,\,\,\,\,\,\,\,}}\limits_{m_2}$}
\put(170,-12){$\mathop{\underbrace{\,\,\,\,\,\,\,\,\,\,\,\,\,\,\,\,\,\,\,\,\,\,\,\,\,\,\,\,\,\,\,}}\limits_{m_t}$}
\end{scriptsize}}
\end{picture}\\

c) $L_*=(2^{(2)},1^{(n-4)}), $ $H_*=(h_1,h_2,h_3^{(m_3)},\ldots,h_t^{(m_t)})$

\begin{picture}(20,50)
\put(55,25){\multiput(0,0)(30,0){6}{$\centerdot$}   \multiput(2,1)(60,0){2}{\line(1,0){28}}        \put(160,0){$\ldots$}
\multiput(180,0)(30,0){2}{$\centerdot$}
\begin{scriptsize}
\put(0,-10){1}              \put(30,-10){2}         \put(60,-10){3}         \put(90,-10){4}        \put(120,-10){5}
\put(150,-10){6}            \put(180,-10){$n-1$}    \put(210,-10){$n$}      \put(0,10){$x_{h_1}$}  \put(60,10){$x_{h_2}$}
\put(120,10){$x_{h_3}$}     \put(150,10){$x_{h_3}$} \put(180,10){$x_{h_t}$} \put(210,10){$x_{h_t}$}
\put(120,-12){$\mathop{\underbrace{\,\,\,\,\,\,\,\,\,\,\,\,\,\,\,\,\,\,\,\,\,\,\,\,\,\,\,\,\,\,}}\limits_{m_3}$}
\put(170,-12){$\mathop{\underbrace{\,\,\,\,\,\,\,\,\,\,\,\,\,\,\,\,\,\,\,\,\,\,\,\,\,\,\,\,\,\,\,}}\limits_{m_t}$}
\end{scriptsize}}
\end{picture}

(In the first two cases $h_2,\ldots,h_t$ are disinct and $h_1$ could be one of them, in the third case $x_{h_1}\succ x_{h_2}$, $x_{h_3}\succ\ldots\succ x_{h_t}$ and $h_1$ or $h_2$ are not necessarily different from $h_3,\ldots,h_t$).

\begin{example}
For $n\geq 6$, the module $E_2^{2(2m-1)}(L_*=(3,1^{(n-3)}),H_*=(1,1^{(n-3)}))$ has the stable decomposition $$ V(1)_n\oplus V(2)_n\oplus V(1,1)_n\oplus V(2,1)_n.$$
\end{example}
\begin{proof}
Computing directly the corresponding character, we find for an arbitrary permutation $\sigma \in \mc{S}_n$ of type $(i_1;i_2;\ldots;i_n)$
(here $i_q$ is the number of cycles of length $q$): for any triple of 1-cycles $(i)(j)(k)$, $1\leq i<j<k\leq n$, $\sigma$ fixes the
monomials $G_{ij}G_{ik}$ and $G_{ij}G_{jk}$ and this gives $2 {i_1\choose 3}$ such monomials. The 3-cycle $(i,j,k)$ changes the sign
of $G_{ij}G_{jk}$ and there is no other combination of cycles leaving $\mathbb{Q}\lan G_{ij}G_{jk}\ran$ or $\mathbb{Q}\lan G_{ij}G_{ik}\ran$ invariant.
Therefore the character is
$$\chi_{E_2^{2(2m-1)}(\bullet - \bullet-\bullet\,\,\,\bullet\ldots\bullet )}(i_1;\ldots;i_n)=2{i_1\choose 3}-i_3.$$
Using Frobenius formula we obtain the next results:
\begin{center} \begin{tabular}{|c|c|}
   \hline
            & $\chi_{V}(i_1;\ldots;i_n)$   \\ \hline
   $V(1)_n$ & $i_1-1$                      \\ \hline
   $V(1,1)_n$ & ${i_1-1\choose 2}-i_2$    \\ \hline
   $V(2)_n$ & $\frac{1}{2}i_1(i_1-3)+i_2 $   \\\hline
   $V(3)_n$ & $\frac{1}{6}i_1(i_1-1)(i_1-5)+i_2(i_1-1)+i_3$      \\ \hline
   $V(2,1)_n$ & $\frac{1}{3}i_1(i_1-2)(i_1-4)-i_3$  \\ \hline
   $V(3,1)_n$ & $\frac{1}{8}i_1(i_1-1)(i_1-3)(i_1-6)+i_2{i_1-1\choose 2}-{i_2\choose 2}-i_4 $  \\
   \hline
\end{tabular}
\end{center}
and as a consequence the above decomposition.
\end{proof}
In the unstable cases, $n=3,4,5$, this module decomposes as
\[
\begin{array}{cl}
 n=3:  & V(2,1), \\
 n=4:  & V(3,1)\oplus V(2,2)\oplus V(2,1,1), \\
 n=5:  & V(4,1)\oplus V(3,2)\oplus V(3,1,1).
\end{array}\]
The same decompositions appear if $H_*$ is replaced by $H_*=(h_1,h_2^{(n-3)})$ with $|x_{h_1}|$, $|x_{h_2}|$ even.
\begin{example}
The module $E_2^{2(2m-1)}(L_*=(2^{(2)},1^{(n-4)}),H_*=(1^{(2)};1^{(n-4)}))$, for $n\geq 7$,  has the stable decomposition
$$V(1)_n\oplus V(2)_n\oplus V(1,1)_n\oplus V(3)_n\oplus V(2,1)_n\oplus V(3,1)_n.$$
\end{example}
\begin{proof}
Direct computation of the character gives non zero contributions only for factors of the form
$(i)(j)(k)(l)$, $(i)(j)(k,l)$, $(i,j)(k,l)$ and $(i,j,k,l)$; the result is
$$\chi_{E_*^*(\bullet-\bullet\,\,\bullet-\bullet\,\,\bullet\,\,\bullet\ldots \bullet)}(i_1;\ldots;i_n)=3{i_1\choose 4}+i_2{i_1\choose 2}-{i_2\choose2}-i_4$$
and from the same table we obtain the above decomposition.
\end{proof}
In the unstable cases we obtain the decompositions
\[
\begin{array}{cl}
 n=4:  & V(3,1), \\
 n=5:  & V(4,1)\oplus V(3,2)\oplus V(3,1,1)\oplus V(2,2,1), \\
 n=6:  & V(5,1)\oplus V(4,2)\oplus V(4,1,1)\oplus V(3,3)\oplus V(3,2,1).
\end{array}\]
\begin{example}
In the case $n=4$ we have the decompositions
$$E_2^{2(2m-1)+2|x_h|}(\mathop{\mathop{\bullet}\limits_{1}-\mathop{\bullet}\limits_2}\limits^{x_h}\,\,
\mathop{\mathop{\bullet}\limits_3-\mathop{\bullet}\limits_4)}\limits^{x_h}\cong \left\{
\begin{array}{lc}
  V(3,1) & \mbox{if } |x_h|=\mbox{even} \\
  V(4)\oplus V(2,2) & \mbox{if } |x_h|=\mbox{odd}.
\end{array}
\right.$$
\end{example}
\begin{proof}
The even case is the same as in the previous example; for the odd case direct computation of the character gives
\begin{center}
\begin{tabular}{|c|c|c|c|c|c|}
\hline
   $\sigma$ & $id$ & (12) & (123) & (1234) & (12)(34) \\ \hline
  $\chi(\sigma)$ & 3 & 1 & 0 & 1 & 3 \\  \hline
  \end{tabular}
\end{center}
For instance:
$$
\begin{array}{l}
    (1234)p_1^*(x_h)p_2^*(x_h)G_{13}G_{24}             = p_2^*(x_h)p_3^*(x_h)G_{24}G_{13} =\\
    =-p_2^*(x_h)p_3^*(x_h)G_{13}G_{24}                 =-p_2^*(x_h)p_1^*(x_h)G_{13}G_{24}=  \\
    =(-1)^{|x_h|+1}p_1^*(x_h)p_2^*(x_h)G_{13}G_{24}.
  \end{array}
$$\end{proof}
Using some of these particular cases and the same proof as in Proposition \ref{prop3.1} and \ref{prop3.2}, we obtain:
\begin{prop}\label{prop3.3}
The $\mc{S}_n$ structure of the modules corresponding to the types on the third horizontal line is given by:

a) $E_2^*(L_*=(3,1^{(n-3)}),H_*=(h_1,h_2^{(m_2)},\ldots,h_t^{(m_t)}))  \cong$ \\
$\cong Ind_{\mc{S}_3\times\mc{S}_{m_2}\times\ldots\times\mc{S}_{m_t}}^{\mc{S}_n}(V(2,1)\o V^{\e(m_2,h_2)}\o\ldots\o V^{\e(m_t,h_t)});$

b) $ E_2^*(L_*=(2^{(2)},1^{(n-4)}),H_*=(h_1^{(2)},h_2^{(m_2)},\ldots,h_t^{(m_t)})) \cong\\
\left\{
\begin{array}{ll}
Ind_{\mc{S}_4\times\mc{S}_{m_2}\times\ldots\times\mc{S}_{m_t}}^{\mc{S}_n}(V(3,1)\o V^{\e(m_2,h_2)}\o\ldots\o V^{\e(m_t,h_t)}) &  |x_{h_1}|=\mbox{even}; \\
Ind_{\mc{S}_4\times\mc{S}_{m_2}\times\ldots\times\mc{S}_{m_t}}^{\mc{S}_n}((V(4)\oplus V(2,2))\o V^{\e(m_2,h_2)}\o\ldots\o V^{\e(m_t,h_t)}) &  |x_{h_1}|=\mbox{odd};
\end{array}
\right.$

c) $E_2^*(L_*=(2^{(2)},1^{(n-4)}),H_*=(h_1,h_2,h_3^{(m_3)},\ldots,h_t^{(m_t)}))\cong\\
\begin{array}{l}
\cong Ind_{\mc{S}_2\times\mc{S}_2\times\mc{S}_{m_3}\times\ldots\times\mc{S}_{m_t}}^{\mc{S}_n}
(V(2)\o V(2)\o V^{\e(m_3,h_3)}\o\ldots\o V^{\e(m_t,h_t)})\\
\cong Ind_{\mc{S}_4\times\mc{S}_{m_3}\times\ldots\times\mc{S}_{m_t}}^{\mc{S}_n}((V(4)\oplus V(3,1)\oplus V(2,2))\o V^{\e(m_3,h_3)}\o\ldots\o V^{\e(m_t,h_t)});
\end{array}$
\end{prop}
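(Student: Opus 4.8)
The plan is to repeat, essentially verbatim, the strategy of Propositions~\ref{prop3.1} and~\ref{prop3.2}: each module $E_2^*(L_*,H_*)$ will be exhibited as the $\mc{S}_n$-span of a single ``local'' subspace $W$ which is stable under a Young subgroup $K$, the $\mc{S}_n$-translates of $W$ will be shown to be linearly independent and permuted transitively with stabilizer exactly $K$, and the conclusion $E_2^*(L_*,H_*)\cong\mathrm{Ind}_K^{\mc{S}_n}W$ will follow from the elementary description of induced representations (cf.\ \cite{Se}).

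First I would fix the distinguished monotonic marked forest in each case: in a) the $3$-bamboo $1\!-\!2\!-\!3$ marked by $x_{h_1}$ together with the isolated vertices $4,\dots,n$ partitioned into consecutive blocks of sizes $m_2,\dots,m_t$ carrying $x_{h_2},\dots,x_{h_t}$; in b) and c) the two $2$-bamboos $1\!-\!2$ and $3\!-\!4$ (marked $x_{h_1},x_{h_1}$ respectively $x_{h_1},x_{h_2}$) followed by the analogous blocks. Let $W$ be the span of all canonical monomials whose underlying combinatorial picture (which vertices form the bamboo(s), which vertices carry which mark) coincides with this one. Refining part a) of the proof of Theorem~\ref{thm2.8}, a transposition $\tau_i$ with $i,i+1$ both among the bamboo vertices, or both inside one of the blocks, sends a monomial of $W$ to a linear combination of monomials of $W$ (a non-monotonic configuration is rewritten into monotonic ones of the same shape, and a mark is only ever transported across a generator $G_{ab}$, which is sign-free, or past a degree-zero factor). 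Hence $W$ is a module over $K=\mc{S}_3\times\mc{S}_{m_2}\times\dots\times\mc{S}_{m_t}$ in case a), over $\mc{S}_4\times\mc{S}_{m_2}\times\dots\times\mc{S}_{m_t}$ in case b), and over $\mc{S}_2\times\mc{S}_2\times\mc{S}_{m_3}\times\dots\times\mc{S}_{m_t}$ in case c); moreover a permutation fixing $W$ must preserve the set of bamboo vertices (these are distinguished from the isolated vertices by connectivity, even when $h_1$ coincides with some $h_j$), may interchange two bamboos only when they have equal length and equal mark, and must preserve each mark-block, so $K$ is precisely the stabilizer and $E_2^*(L_*,H_*)=\bigoplus_{\sigma K}\sigma W\cong\mathrm{Ind}_K^{\mc{S}_n}W$.

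It then remains to identify $W$ as a $K$-module. Each factor $\mc{S}_{m_i}$ acts on the one-dimensional span of the $m_i$-fold tensor power of $x_{h_i}$ by $\mathrm{sgn}^{|x_{h_i}|}$, i.e.\ as $V^{\e(m_i,h_i)}$, exactly as in Proposition~\ref{prop3.1}. For the bamboo factors I would quote the small cases computed just above: the $3$-bamboo carries the standard representation $V(2,1)$ of $\mc{S}_3$, independently of $|x_{h_1}|$ (the root mark moves only through the sign-free relation $p_j^*(x)G_{ij}=p_i^*(x)G_{ij}$); the pair of equally-marked $2$-bamboos carries $V(3,1)$ or $V(4)\oplus V(2,2)$ over $\mc{S}_4$ according to the parity of $|x_{h_1}|$, as in the Example preceding this Proposition; and the pair of distinctly-marked $2$-bamboos carries $V(2)\otimes V(2)$ over $\mc{S}_2\times\mc{S}_2$, each factor acting trivially for the same sign-free reason. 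Substituting these into $\mathrm{Ind}_K^{\mc{S}_n}W$ yields the three displayed isomorphisms; the second form in c) is obtained from the first by inducing in stages through $\mc{S}_4\times\mc{S}_{m_3}\times\dots$ and using the classical decomposition $\mathrm{Ind}_{\mc{S}_2\times\mc{S}_2}^{\mc{S}_4}\bigl(V(2)\otimes V(2)\bigr)\cong V(4)\oplus V(3,1)\oplus V(2,2)$.

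The only genuinely delicate point is the sign bookkeeping behind ``$W$ is a $K$-module'' and behind the identification of the two-bamboo factors; in particular, in case b) the parity of $|x_{h_1}|$ enters precisely through the Koszul sign produced when a permutation swaps the two identical bamboos (together with their two odd-degree generators $G_{ab}$), whereas in cases a) and c) no such swap is available, which is why those answers are parity-free. These are exactly the computations recorded in the Examples immediately before the Proposition, so once the induced-representation framework is set up the remainder is routine.
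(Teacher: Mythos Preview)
Your proposal is correct and follows exactly the approach the paper intends: the paper's own proof is the single sentence ``Using some of these particular cases and the same proof as in Proposition~\ref{prop3.1} and~\ref{prop3.2}'', and what you have written is precisely a spelling-out of that sentence --- set up the induced-representation framework by identifying the local subspace $W$ and its Young stabilizer $K$, then read off $W$ as a $K$-module from the Examples immediately preceding the Proposition (the $\mc{S}_3$-module $V(2,1)$ on a $3$-bamboo, the $\mc{S}_4$-modules $V(3,1)$ or $V(4)\oplus V(2,2)$ on two equally marked $2$-bamboos, and the classical $\mathrm{Ind}_{\mc{S}_2\times\mc{S}_2}^{\mc{S}_4}\bigl(V(2)\otimes V(2)\bigr)$ in the distinct-mark case). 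Your remarks on where the parity of $|x_{h_1}|$ enters (only via the swap of identical bamboos in case b)) and on induction in stages for the second form of c) are the right justifications and are implicit in the paper's terse statement.
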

Now we will analyze the top horizontal line; we start with the upper-left vertex of the trapezoid, $E_{n-1}^{(n-1)(2m-1)}(X,n).$
\begin{example}
For small values of $n$ the structure of this module is a consequence of the absence (for $n\geq 2$) of the one dimensional submodules of
$E_{n-1}^*(X,n)$ and of the result of Section 2: $Res_{\mc{S}_{n-1}}^{\mc{S}_n}E_{n-1}^{(n-1)(2m-1)}(X,n)\cong \mathbb{Q}[\mc{S}_{n-1}]:$
\[\begin{array}{cl}
    n=2: & E_1^{2m-1}\cong V(2); \\
    n=3: & E_2^{2(2m-1)}\cong V(2,1); \\
    n=4: & E_3^{3(2m-1)}\cong V(3,1)\oplus V(2,1,1); \\
    n=5: & E_4^{4(2m-1)}\cong V(4,1)\oplus V(3,2)\oplus V(3,1,1)\oplus V(2,2,1)\oplus V(2,1,1,1).
  \end{array}
\]
\end{example}
By Poincar\'e duality, the right side of the trapezoid has the same structure.
\begin{example}
For $n=6$ there are seven $\mc{S}_6$-modules without one dimensional submodules and satisfying
$Res_{\mc{S}_5}^{\mc{S}_6}E_5^{5(2m-1)}\cong \mathbb{Q}[\mc{S}_5].$ A direct combinatorial computation of the
character of $E_{5} ^{5(2m-1)}$ will give its non zero values
\begin{center}
\begin{tabular}{|c|c|c|c|c|}
  \hline
  $\sigma$ & $id$ & $(123456)$ & $(123)(456)$ & $(12)(34)(56)$ \\ \hline
  $\chi(\sigma)$ & $120$ & $-1$ & $-3$ & $8$ \\
  \hline
\end{tabular}
\end{center}
and therefore the next (asymmetric) decomposition: $E_5^{5(2m-1)}\cong$
\[\begin{array}{ccccccccc}
      V(5,1)      &  \oplus  & 2V(4,2)   & \oplus & V(4,1,1)    &  \oplus  &          & \oplus    &  3V(3,2,1) \\
      V(2,1,1,1,1)&  \oplus  & V(2,2,1,1)& \oplus & 2V(3,1,1,1) &  \oplus  & V(2,2,2).&
  \end{array}
\]

\noindent One can find a character table of $\mc{S}_6$ in \cite{L}.
\end{example}

Alternatively, we can use the next general result of Stanley \cite{St} and Lehrer-Solomon \cite{LeSo} and the Frobenius reciprocity formula.
Let us remind some notation necessary to present Lehrer-Solomon theorem:
consider the subgroup generated by the $n$-cycle $c_n=(1,2,\ldots ,n)$, $\lan c_n\ran$, the elementary character $\varphi_n$ of this cyclic group,  $\varphi_n(c_n)=e^{\frac{2\pi i}{n}}$ and $\varepsilon_n$, the sign character of $\mc{S}_n$.
More generally, for a partition $L_*\vdash n$, $L_*=(\l_1\geq\l_2\geq\ldots\l_t\geq1)$, let us denote by $c_{L_*}$ the product of cycles
$c_{\l_1}c_{\l_2}\ldots c_{\l_t}$, $c_{\l_1}=(1,2,\ldots,L_1),$ $c_{\l_2}=(L_1+1,L_1+2,\ldots,L_2),\ldots, c_{\l_t}=(L_{t-1}+1,L_{t-1}+2,\ldots,L_t)$.
Associated to $L_*$ we have the next diagram of groups and characters of one-dimensional representations:
\begin{center}
\begin{picture}(10,160)
\put(10,0){$1$} \multiput(20,10)(35,30){2}{\line(1,1){20}}  \put(0,10){\line(-1,1){20}}  
\put(-35,33){$\lan c_{\l}\ran$}         \put(30,33){$(N_{L_*},\a_{L_*})$}
\put(-70,66){$(C_{L_*},\varphi_{L_*})$} \put(-35,99){$(Z_{L_*},\xi_{L_*})$} \put(75,66){$\mc{S}_t$}
\put(-105,99){$\mc{S}_{L_*}$}  \put(-70,132){$(\mc{S}_n,\ve_n)$}  \put(30,45){\line(-1,1){50}}
\multiput(-35,42)(-39,33){2}{\line(-1,1){20}}  \put(-37,107){\line(-1,1){20}}   
\put(-70,125){\line(-1,-1){20}}
\put(-35,95){\line(-1,-1){20}}
\end{picture}
\end{center}
where $\lan c_{L_*}\ran$ is the subgroup generated by $c_{L_*}$, $\mc{S}_{L_*}$ is the direct product
$$\mc{S}_{L_*}\cong\mc{S}_{\l_1}\times\mc{S}_{\l_2}\times\ldots\times\mc{S}_{\l_t},$$
$C_{L_*}$ is the centralizer of $c_{L_*}$ in $\mc{S}_{L_*}$:
$$C_{L_*}=\lan c_{\l_1}\ran\times\lan c_{\l_2}\ran\times\ldots\times\lan c_{\l_t}\ran.$$
The group $N_{L_*}$ is generated by the elements $v_i$, ``block transpositions" corresponding to equal parts $\l_i=\l_{i+1}$ in the partition $L_*$:
$$v_i=(L_{i-1}+1,L_i+1)(L_{i-1}+2,L_i+2)\ldots (L_i=L_{i-1}+\l_i,L_{i+1}=L_i+\l_{i+1}).$$
The last group, $Z_{L_*}$, is the centralizer of $c_{L_*}$ in $\mc{S}_n$ and it is a semi direct product
$$Z_{L_*}=C_{L_*}\rtimes N_{L_*}.$$
Lehrer-Solomon main formula involves the next characters:
\[\begin{array}{cl}
    \varphi_{L_*}:C_{L_*}\ra \mathbb{C}^*, & \varphi_{L_*}=(\varphi_{\l_1}\otimes\ldots\otimes \varphi_{\l_t})\cdot\varepsilon_n\mid_{C_{L_*}}, \\
    \a_{L_*}:N_{L_*}\ra\mathbb{C}^*, & \a_{L_*}(v_i)=(-1)^{\l_i+1}, \\
    \xi_{L_*}:Z_{L_*}\ra \mathbb{C}^*, & \xi_{L_*}=\a_{L_*}\cdot\varphi_{L_*}.
  \end{array}
\]
\begin{theorem}(Stanley \cite{St}, Lehrer-Solomon \cite{LeSo})
The representation of $\mc{S}_n$ on the top component $\mc{A}^{n-1}(n)$ of the Arnold algebra has the character:
$$\chi_{\mc{A}^{n-1}(n)}=\varepsilon_n Ind_{\lan c_n \ran}^{\mc{S}_n}(\varphi_n).$$
\end{theorem}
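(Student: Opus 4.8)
The plan is to reduce the statement to an identity of class functions, using the corollary of Section~\ref{section2} to pin down the support and then matching values. Both sides are $\mathbb{Q}\mc{S}_n$-modules of dimension $(n-1)!$, so it suffices to check that they have the same character. By the corollary following Theorem~\ref{thm2.8} one has $\mbox{Res}^{\mc{S}_n}_{\mc{S}_{n-1}}\mc{A}^{n-1}(n)\cong\mathbb{Q}[\mc{S}_{n-1}]$; since a regular character vanishes off the identity, and ``having a fixed point'' is a conjugation-invariant condition, the class function $\chi_{\mc{A}^{n-1}(n)}$ is supported on the identity together with the fixed-point-free permutations. The same holds for $\varepsilon_n\,\mbox{Ind}_{\lan c_n\ran}^{\mc{S}_n}(\varphi_n)$: an induced character is supported on permutations conjugate into $\lan c_n\ran$, and every non-identity element of $\lan c_n\ran$ is fixed-point-free of rectangular cycle type $\ell^{(g)}$ (meaning $g$ cycles each of length $\ell$, $\ell g=n$). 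So it remains to prove that (i) $\chi_{\mc{A}^{n-1}(n)}$ vanishes on fixed-point-free permutations that are not of rectangular type, and (ii) the two class functions agree on the identity and on each rectangular type.

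Part~(ii) on the right-hand side is a routine computation with the standard formula for an induced character. For $\sigma$ of type $\ell^{(g)}$, writing $k=gk'$ one gets
\[
\mbox{Ind}_{\lan c_n\ran}^{\mc{S}_n}(\varphi_n)(\sigma)=\frac{|C_{\mc{S}_n}(\sigma)|}{n}\sum_{\substack{0<k<n\\ \gcd(k,n)=g}}\varphi_n(c_n^k)=\frac{\ell^g g!}{n}\sum_{\substack{0<k'<\ell\\ \gcd(k',\ell)=1}}e^{2\pi i k'/\ell}=\ell^{g-1}(g-1)!\,\mu(\ell),
\]
by the Ramanujan-sum identity $\sum_{\gcd(k',\ell)=1}e^{2\pi i k'/\ell}=\mu(\ell)$ (the value at $\sigma=1$ being $(n-1)!$); multiplying by $\varepsilon_n(\sigma)=(-1)^{n-g}$ gives the candidate character, equal to $(-1)^{n-g}\ell^{g-1}(g-1)!\,\mu(\ell)$ on rectangular types and $0$ elsewhere. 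This reproduces the $n=6$ values $\chi(id)=120$, $\chi((123456))=-1$, $\chi((123)(456))=-3$, $\chi((12)(34)(56))=8$ of the excerpt.

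It remains to compute $\chi_{\mc{A}^{n-1}(n)}(\sigma)$ for fixed-point-free $\sigma$ and to recognise the shape above. I would work in the basis of $\mc{A}^{n-1}(n)$ consisting of monotonic spanning trees on $\{1,\dots,n\}$, equivalently Hamiltonian-path monomials $G_{1i_2}G_{i_2i_3}\cdots G_{i_{n-1}i_n}$, exactly as introduced in Section~\ref{section2}. The permutation $\sigma$ relabels such a tree $T$ into a tree $\sigma(T)$, usually not monotonic; rewriting $\sigma(T)$ into the monotonic basis by repeated use of $G_{hk}G_{ik}=G_{hi}G_{ik}-G_{hi}G_{hk}$ at the penultimate vertices of paths that violate monotonicity produces a signed expansion, and $\chi_{\mc{A}^{n-1}(n)}(\sigma)$ equals the sum over $T$ of the coefficient of $T$ in the expansion of $\sigma(T)$. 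Analysing when such a diagonal coefficient is nonzero shows that a tree can contribute only when its shape is compatible with the cycle structure of $\sigma$, which forces all cycles of $\sigma$ to have a common length $\ell$; a generating-function (or inclusion--exclusion) bookkeeping of the surviving signs then yields precisely $(-1)^{n-g}\ell^{g-1}(g-1)!\,\mu(\ell)$, matching the output of the previous paragraph.

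The bookkeeping of the first two paragraphs is straightforward; the genuine obstacle is the trace computation of the last step --- controlling the rewriting of $\sigma(T)$ into the monotonic basis with the correct signs, establishing the vanishing for non-rectangular fixed-point-free $\sigma$, and extracting the exact Ramanujan--M\"obius value in the rectangular case. If one prefers to borrow more, an alternative is to invoke the classical isomorphism $\mc{A}^{n-1}(n)\cong\varepsilon_n\otimes\mathrm{Lie}(n)$ of $\mc{S}_n$-modules together with Klyachko's theorem $\mathrm{Lie}(n)\cong\mathrm{Ind}_{\lan c_n\ran}^{\mc{S}_n}(\psi)$ for a faithful linear character $\psi$ of $\lan c_n\ran$, and then reconcile the character twists by the same Ramanujan-sum identity --- but this merely transfers the difficulty into the two cited inputs.
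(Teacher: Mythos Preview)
The paper does not prove this theorem; it is quoted from Stanley and Lehrer--Solomon with no argument given, so there is no in-paper proof to compare against.

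Your reductions in the first two paragraphs are correct and efficient: the corollary to Theorem~\ref{thm2.8} does force $\chi_{\mc{A}^{n-1}(n)}$ to vanish on every non-identity permutation with a fixed point (such a $\sigma$ is $\mc{S}_n$-conjugate into $\mc{S}_{n-1}$, where the regular character vanishes), and your evaluation of the right-hand side on rectangular cycle types via the Ramanujan sum is accurate and recovers the $n=6$ values in the paper.

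The genuine gap is your third paragraph, and you yourself flag it. You assert that analysing the diagonal coefficient of a monotonic tree $T$ in the basis expansion of $\sigma(T)$ ``forces all cycles of $\sigma$ to have a common length'' and that a ``generating-function (or inclusion--exclusion) bookkeeping'' then yields $(-1)^{n-g}\ell^{g-1}(g-1)!\,\mu(\ell)$. Neither claim is carried out. For the vanishing on non-rectangular fixed-point-free $\sigma$, note that individual trees can certainly contribute nonzero diagonal coefficients; what one needs is a cancellation argument over all trees, and you have not supplied one. For the rectangular case, the rewriting of $\sigma(T)$ via the Arnold relation branches repeatedly, and extracting the precise signed count (with the M\"obius factor) is the entire content of the Stanley/Lehrer--Solomon arguments --- it is not a routine bookkeeping exercise. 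Your alternative of invoking $\mc{A}^{n-1}(n)\cong\varepsilon_n\otimes\mathrm{Lie}(n)$ and Klyachko's theorem is correct but, as you say, amounts to citing the theorem rather than proving it. As written, the proposal is a sound outline that stops just short of the actual proof.
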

On the upper horizontal line the types are parameterized by the $n-$monotonic bamboo and the monomials from the fixed basis $\mc{B}$:
\begin{center}
\begin{picture}(50,40)
\put(-20,0){ \put(20,20){\line(1,0){80}}   \multiput(20,19)(20,0){2}{$\centerdot$}   \put(100,19){$\centerdot$}
\put(15,10){$1$}     \put(39,10){$2$}      \put(95,10){$n$}   \put(65,10){$\ldots$}  \put(19,30){$x_h$}   }
\end{picture}
\end{center}
\begin{prop}\label{prop3.4}
The $\mc{S}_n$ structure of the top component is given by
\[\begin{array}{cl}
    \chi_{E_{n-1}^{(n-1)(2m-1)+|x_h|}(n,h)} & \cong \varepsilon_n Ind_{\lan c_n \ran}^{\mc{S}_n}(\varphi_n), \\
    \chi_{E_{n-1}^{(n-1)(2m-1)+i}(X,n)} & \cong \b_i\cdot \varepsilon_n Ind_{\lan c_n \ran}^{\mc{S}_n}(\varphi_n)
  \end{array}
\]
($\b_i$ is the $i$-th Betti number).
\end{prop}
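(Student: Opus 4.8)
The plan is to show that the type submodule $E_{n-1}^{(n-1)(2m-1)+|x_h|}(n,h)$ is isomorphic, as an $\mc{S}_n$-module, to the top component $\mc{A}^{n-1}(n)$ of the Arnold algebra, and then to invoke the theorem of Stanley and Lehrer-Solomon quoted above. Its canonical basis consists of the monomials $p_1^*(x_h)G_{I_*J_*}$, one for each monotonic tree on $\{1,\ldots,n\}$: since the vertices $2,\ldots,n$ all lie in $J_*$, the single mark necessarily sits on the root $1$. Forgetting the mark gives exactly the combinatorial data labelling the canonical (monotonic-tree) basis of $\mc{A}^{n-1}(n)$. Moreover the relations of $E_*^*(X,n)$ that involve only the generators $G_{ij}$ --- anticommutativity, valid because $\deg G_{ij}=2m-1$ is odd, together with $G_{ik}G_{jk}=G_{ij}G_{jk}-G_{ij}G_{ik}$ --- are exactly the defining relations of $\mc{A}(n)$, and in both cases $\mc{S}_n$ relabels indices by $\s(G_{ij})=G_{\s(i)\s(j)}$. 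I would therefore define $\Psi\colon E_{n-1}^{(n-1)(2m-1)+|x_h|}(n,h)\ra\mc{A}^{n-1}(n)$ on the canonical bases by $\Psi(p_1^*(x_h)G_{I_*J_*})=G_{I_*J_*}$; this is visibly a linear isomorphism, and the whole point is its $\mc{S}_n$-equivariance.

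The technical ingredient is an elementary \emph{sliding lemma}: if $M$ is a monomial in the $G_{ij}$ whose associated graph is connected and contains the vertices $1$ and $v$, then $p_v^*(x_h)M=p_1^*(x_h)M$ in $E_*^*(X,n)$, with no sign. One proves this by moving the mark one edge at a time along a path from $v$ to $1$ inside the graph of $M$. To pass from $p_u^*(x_h)$ to $p_w^*(x_h)$ across an edge $\{u,w\}$ of that graph (so that $G_{uw}$ occurs in $M$): commute $p_u^*(x_h)$ rightward past the $G$'s that separate it from the factor $G_{uw}$, each commutation contributing $(-1)^{|x_h|(2m-1)}$; apply the honest equality $p_u^*(x)G_{uw}=p_w^*(x)G_{uw}$; then commute $p_w^*(x_h)$ back past the same $G$'s. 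The two batches of signs cancel, so no sign survives.

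Granting the sliding lemma, equivariance of $\Psi$ follows. For $\s\in\mc{S}_n$ one has $\s\cdot(p_1^*(x_h)G_{I_*J_*})=p_{\s(1)}^*(x_h)G_{\s(I_*)\s(J_*)}=p_1^*(x_h)G_{\s(I_*)\s(J_*)}$, the last step by the lemma, since $G_{\s(I_*)\s(J_*)}$ has a connected (tree) graph on all of $\{1,\ldots,n\}$. Reducing $G_{\s(I_*)\s(J_*)}$ to the monotonic basis now uses only the exterior relations, which act on the $G$-part while $p_1^*(x_h)$ remains in front as a passive factor; if $G_{\s(I_*)\s(J_*)}=\sum_{T'}c_{T'}G_{I'_*J'_*}$ is this reduction --- the same one performed in $\mc{A}(n)$ when computing $\s\cdot G_{I_*J_*}$ --- then $\s\cdot(p_1^*(x_h)G_{I_*J_*})=\sum_{T'}c_{T'}\,p_1^*(x_h)G_{I'_*J'_*}$, and $\Psi$ intertwines the actions. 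Hence $E_{n-1}^{(n-1)(2m-1)+|x_h|}(n,h)\cong\mc{A}^{n-1}(n)$ and its character equals $\ve_n\,Ind_{\lan c_n\ran}^{\mc{S}_n}(\varphi_n)$, which is the first formula.

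For the second formula, apply Theorem~\ref{thm2.8}: $E_{n-1}^{(n-1)(2m-1)+i}(X,n)=\bigoplus E_{n-1}^{(n-1)(2m-1)+i}(L_*,H_*)$, the sum over types with $|L_*|=n-1$ and $|H_*|=i$. For a forest of $t$ bamboos on $n$ vertices one has $|L_*|=\sum_a(\l_a-1)=n-t$, so $|L_*|=n-1$ forces $t=1$, i.e.\ $L_*=(n)$; then $H_*=(h)$ ranges over the $\b_i$ basis elements $x_h\in\mc{B}$ of degree $i$, and by the first part each summand is $\cong\mc{A}^{n-1}(n)$. Summing yields $E_{n-1}^{(n-1)(2m-1)+i}(X,n)\cong\b_i\cdot\mc{A}^{n-1}(n)$ and the stated character. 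The step that needs genuine care is the sliding lemma, together with the observation that moving the mark and canonicalizing the exterior part decouple; they do, because sliding the mark alters no $G_{ij}$ and contributes no sign, and the rest is routine bookkeeping with the canonical basis.
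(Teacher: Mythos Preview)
Your proof is correct and follows essentially the same route as the paper. The paper's argument is the single line $\sigma(x_h\o 1\o\ldots\o 1\,G_{I_*J_*})=p_{\sigma(1)}^*(x_h)\cdot\sigma(G_{I_*J_*})=p_{1}^*(x_h)\cdot\sigma(G_{I_*J_*})$, which yields the $\mc{S}_n$-decomposition $E_{n-1}^{(n-1)(2m-1)+i}(X,n)\cong\bigoplus_{x_h\in\mc{B}\cap H^i(X)}p_1^*(x_h)\cdot\mc{A}^{n-1}(n)$; your sliding lemma is precisely the detailed justification of the second equality, and your map $\Psi$ makes explicit the isomorphism $p_1^*(x_h)\cdot\mc{A}^{n-1}(n)\cong\mc{A}^{n-1}(n)$ that the paper leaves implicit.
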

\begin{proof}
As the action of the symmetric group does not change the coefficients $$\sigma(x_h\o 1\o\ldots \o 1G_{I_*J_*})=p_{\sigma(1)}^*(x_h)\cdot\sigma(G_{I_*J_*})=p_{1}^*(x_h)\cdot\sigma(G_{I_*J_*})$$
we have the $\mc{S}_n$-decomposition: $$E_{n-1}^{(n-1)(2m-1)+i}(X,n)\cong \mathop{\bigoplus}\limits_{x_h\in\mc{B}\cap H^i(X)}p_1^*(x_h)\cdot \mc{A}^{n-1}(n).$$
\end{proof}
\begin{example}
If the number of points is an odd prime number $p$, then the multiplicities $m_{\lambda}$ of the irreducible $\mc{S}_p$-modules are given by
$$E^{(p-1)(2m-1)+|x_h|}_{p-1}(p,h)\cong \mathop{\bigoplus}\limits_{\lambda \vdash p}m_{\lambda}V(\lambda),$$
$$m_{\lambda}=\frac{1}{p}(\dim V(\lambda)-\chi_{\lambda}(c_p)).$$
\end{example}
\begin{proof}
From the Frobenius reciprocity formula we obtain the expansion (here $V(\l^{\varepsilon})=V(1,1,\ldots,1)\o V(\l)$ with character $\chi_{\l^{\ve}}$)
\[\begin{array}{cll}
    m_{\l} & =\lan\chi_V,\ve_p Ind_{\lan c_p\ran}^{\mc{S}_p}(\varphi_p)\ran_{\mc{S}_p} & =\lan\chi_{\l^\ve}, Ind_{\lan c_p\ran}^{\mc{S}_p}(\varphi_p)\ran_{\mc{S}_p}= \\
           & =\lan Res_{\lan c_p\ran}^{\mc{S}_p}\chi_{\l^{\ve}},(\varphi_p)\ran_{\mathbb{Z}_p} & =\frac{1}{p}\mathop{\sum}\limits_{k=0}^{p-1}\chi_{\l^{\ve}}(c_p^k)e^{\frac{2k\pi i}{p}}
\end{array}
\]
in which all the values $\chi_{\l^{\ve}}(c_p^k)$ are equal but not the first one:
$$\chi_{\l^{\ve}}(c_p^0)=\dim V(\l^{\ve})=\dim V(\l).$$
\end{proof}
Now we consider the type $L_*=(\l_1,\l_2,\ldots,\l_t)$, $H_*=(1^{(t)})$ ($|L_*|=\sum(\l_i-1)$). Translated into the language of types, the main formula of
Lehrer-Solomon is
\begin{theorem}(Lehrer-Solomon \cite{LeSo})
The representation of $\mc{S}_n$ on the component $E_{|L_*|}^{|L_*|(2m-1)}(L_*,1^{(t)})$ has the character
$$\chi_{E_{|L_*|}^{|L_*|(2m-1)}(L_*,1^{(t)})}=Ind_{Z_{L_*}}^{\mc{S}_n}(\xi_{L_*}).$$
\end{theorem}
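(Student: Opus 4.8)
\medskip
\noindent\textbf{Proof proposal.} The plan is to reduce the statement to the main theorem of Lehrer-Solomon \cite{LeSo} on the Arnold algebra $\mc{A}(n)=H^*(F(\mathbb{C},n))$. First I would note that for the type $(L_*,1^{(t)})$ all the marks are trivial, so that every monomial of the canonical basis of $E_{|L_*|}^{|L_*|(2m-1)}(L_*,1^{(t)})$ has the form $1\o 1\o\ldots\o 1\,G_{I_*J_*}$, with no contribution from the tensor factor $H^{*\o n}$: the relation $p_j^*(x)G_{ij}=p_i^*(x)G_{ij}$ is vacuous for $x=1$, and on such monomials the only relations of $E_*^*(X,n)$ that survive are $G_{ji}=G_{ij}$, $G_{ik}G_{jk}=G_{ij}G_{jk}-G_{ij}G_{ik}$ and the exterior-algebra relations, i.e. exactly the defining relations of $\mc{A}(n)$. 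Consequently $1\o\ldots\o 1\,G_{I_*J_*}\mapsto G_{I_*J_*}$ identifies $E_{|L_*|}^{|L_*|(2m-1)}(L_*,1^{(t)})$ with the span in $\mc{A}(n)$ of the monomials whose associated monotonic forest has trees of sizes $\l_1,\ldots,\l_t$ --- the ``type $L_*$'' component of $\mc{A}(n)$ studied in \cite{LeSo}.

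Next I would check that this identification is $\mc{S}_n$-equivariant. In both algebras $\mc{S}_n$ acts by relabelling the indices of the generators $G_{ij}$ and collecting the Koszul sign produced when the product is rewritten in the standard order; since $\deg G_{ij}=2m-1$ is odd, this sign agrees with the one obtained in $\mc{A}(n)$, whose generators have the (odd) degree $1$. Hence $E_{|L_*|}^{|L_*|(2m-1)}(L_*,1^{(t)})$ is isomorphic, as an $\mc{S}_n$-module, to the type $L_*$ component of $\mc{A}(n)$, and the asserted equality $\chi_{E_{|L_*|}^{|L_*|(2m-1)}(L_*,1^{(t)})}=Ind_{Z_{L_*}}^{\mc{S}_n}(\xi_{L_*})$ becomes precisely the Lehrer-Solomon formula for that component (the case $L_*=(n)$ being the top-component theorem quoted just above).

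One could also attempt a self-contained argument from Theorem \ref{thm2.8}, and this makes plain where the difficulty lies. By Theorem \ref{thm2.8} the module is monogenic, and counting the monotonic forests of type $L_*$ gives the number $n!/(\l_1\l_2\ldots\l_t\cdot\prod_j m_j!)=[\mc{S}_n:Z_{L_*}]$, where $m_j$ denotes the number of parts of $L_*$ equal to $j$; this matches $\dim Ind_{Z_{L_*}}^{\mc{S}_n}(\xi_{L_*})$. It would then remain to produce a generating vector on which $Z_{L_*}$ acts through the character $\xi_{L_*}$ and to finish as in the proofs of Propositions \ref{prop3.1} and \ref{prop3.2}. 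The hard part is exactly this last step: the obvious generator $\mu_{(L_*,1^{(t)})}$ supplied by Theorem \ref{thm2.8} is \emph{not} fixed up to a scalar by $\lan c_{L_*}\ran$ --- already $c_3\cdot(G_{12}G_{23})=G_{23}G_{13}=-G_{12}G_{23}+G_{12}G_{13}$ fails to be proportional to $G_{12}G_{23}$ --- so one must pass to $\mathbb{C}$-coefficients and build the correct $c_{L_*}$-eigenvector as a linear combination of the monotonic-forest basis, then verify that $Z_{L_*}$ acts on it by $\xi_{L_*}$. Constructing this eigenvector is the technical core of \cite{LeSo}, so the intended proof is to invoke their theorem through the identification above rather than to reprove it.
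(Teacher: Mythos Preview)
Your proposal is correct and matches the paper's treatment: the paper does not give an independent proof of this statement but presents it as a direct translation of the Lehrer--Solomon formula into the language of types, and your first two paragraphs supply precisely the justification for that translation (the $\mc{S}_n$-equivariant identification of $E_{|L_*|}^{|L_*|(2m-1)}(L_*,1^{(t)})$ with the type-$L_*$ component of $\mc{A}(n)$). Your additional discussion of why a self-contained argument via Theorem~\ref{thm2.8} is not straightforward is accurate and helpful, though not needed for the result.
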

To extend this to a general type $L_*=(\l_1,\ldots,\l_t)$, $H_*=(h_1,h_2,\ldots,h_t),$ we modify the notation of Lehrer-Solomon as follows:
the group $N_{(L_*,H_*)}$ is generated by the elements $v_i$ corresponding to the transposition of equal marked bamboos: $\l_i=\l_{i+1}$, $h_i=h_{i+1}$
(remember that for equal lengths $\l_c=\l_{c+1}=\ldots=\l_d$, the corresponding marks are decreasing, not necessarily strictly:
$x_{h_c}\succeq x_{h_{c+1}}\succeq\ldots\succeq x_{h_d}$); of course, $v_i$ are given by the same product of disjoint transpositions.

The subgroup $Z_{(L_*,H_*)}$ is defined by the same formula:
$$Z_{(L_*,H_*)}=C_{L_*}\rtimes N_{(L_*,H_*)},$$
but now is, in general, smaller than the centralizer of $c_{L_*}$ in $\mc{S}_n$. The character $\varphi_{L_*}:C_{L_*}\ra\mathbb{C}^*$ is given by the same formula: for a permutation $\sigma\in\mc{S}_{L_*}=\mc{S}_{l_1}\times\mc{S}_{l_2}\times\ldots\times\mc{S}_{l_t}$, the sign of $\sigma(x_{h_1}\o 1\o\ldots\o x_{h_2}\o1\ldots G_{I_*J_*})$ is given only by the permutation of the exterior factors $G_{ij}$, like in Lehrer-Solomon definition of $\varphi_{L_*}$. The coefficients $\ldots\o x_{h_i}\o1\ldots\o x_{h_i}\o1\o\ldots$ do have a contribution to the sign after the action of a permutation $\rho\in N_{(L_*,H_*)}$ if the degree $|x_{h_i}|$ is odd; therefore the character $\a_{(L_*,H_*)}$ should be modified as follows:
\[\a_{(L_*,H_*)}=\left\{
\begin{array}{lc}
  (-1)^{\l_i+1} & \mbox{if } |x_{h_i}| \mbox{ is even,} \\
  (-1)^{\l_i} &  \mbox{if } |x_{h_i}| \mbox{ is odd,}
\end{array}
\right.\]
and accordingly the character $\xi$ is modified: $$\xi_{(L_*,H_*)}=\varphi_{L_*}\cdot\a_{(L_*,H_*)}.$$ Finally we obtain the character of the Kri\v{z} algebra $E_*^*(X,n):$
\begin{theorem}\label{thm3.15}
a) The $\mc{S}_n$ representation of the submodule $E_*^*(L_*,H_*)$ has the character $$\chi_{E_*^*(L_*,H_*)}=Ind_{Z_{(L_*,H_*)}}^{\mc{S}_n}(\xi_{(L_*,H_*)}).$$

b) The $\mc{S}_n$ representations of the component $E_q^k(X,n)$ has the character $$\chi_{E_q^k(X,n)}=\mathop{\mathop{\sum}\limits_{|L_*|=q}}\limits_{|H_*|+q(2m-1)=k}Ind_{Z_{(L_*,H_*)}}^{\mc{S}_n}(\xi_{(L_*,H_*)}).$$

c) The $\mc{S}_n$ representations of the Kri\v{z} algebra $E_*^*(X,n)$ has the character $$\chi_{E_*^*(X,n)}=\mathop{\sum}\limits_{(L_*,H_*)}Ind_{Z_{(L_*,H_*)}}^{\mc{S}_n}(\xi_{(L_*,H_*)}).$$
\end{theorem}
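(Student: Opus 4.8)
Parts (b) and (c) I would deduce formally from (a): by Theorem~\ref{thm2.8} one has $E_q^k(X,n)=\bigoplus E_q^k(L_*,H_*)$ over the types with $|L_*|=q$ and $|H_*|=k-q(2m-1)$, so summing characters gives (b); and $E_*^*(X,n)=\bigoplus_{(L_*,H_*)}E_*^*(L_*,H_*)$ over all types, which gives (c). So the real content is (a), and my plan is to run the Lehrer--Solomon argument (the theorem quoted just above, which is exactly the case $H_*=(1^{(t)})$ of (a)) while keeping track of the signs produced by the cohomology marks.

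The setup is Theorem~\ref{thm2.8}(b): $E_*^*(L_*,H_*)$ is the cyclic $\mc{S}_n$-module on the monomial $\mu_{(L_*,H_*)}$ attached to the monotonic marked forest of bamboos, with the monotonic-marked-forest basis. The key point I would isolate is that the marks are almost inert. Inside a single component the unique mark $x_{h_i}$ sits on the minimal vertex, and the relation $p_j^*(x)G_{ij}=p_i^*(x)G_{ij}$ that moves it there carries no sign; so the cyclic rotation $c_{\lambda_i}$ of the $i$-th bamboo, and every permutation internal to one component, reduces (via the broken-circuit rule $G_{ik}G_{jk}=G_{ij}G_{jk}-G_{ij}G_{ik}$) with exactly the same signs as in the unmarked Arnold algebra $\mc{A}(n)$. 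Hence the analysis of $C_{L_*}=\langle c_{\lambda_1}\rangle\times\cdots\times\langle c_{\lambda_t}\rangle$ is word for word that of \cite{LeSo} and still produces the character $\varphi_{L_*}$; the marks only intervene when one bamboo is carried onto another.

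Next I would pin down that one place. A permutation carries the $i$-th bamboo onto the $(i+1)$-st only if $\lambda_i=\lambda_{i+1}$ \emph{and} $h_i=h_{i+1}$ — this is exactly why $N_{(L_*,H_*)}$ retains only those block transpositions $v_i$, is in general a proper subgroup of $N_{L_*}$, and makes $Z_{(L_*,H_*)}=C_{L_*}\rtimes N_{(L_*,H_*)}$ smaller than the centraliser of $c_{L_*}$. For such a $v_i$, applying $v_i$ to $\mu_{(L_*,H_*)}$ leaves both bamboos monotonic (no rewriting), but interchanges the two equal-degree exterior blocks $\overline{G_{L_{i-1}+1,L_i}}$ and $\overline{G_{L_i+1,L_{i+1}}}$, costing the Koszul sign $(-1)^{(\lambda_i-1)(2m-1)}=(-1)^{\lambda_i+1}$ as in \cite{LeSo}, and also interchanges the two equal marks $x_{h_i}$, costing $(-1)^{|x_{h_i}|^2}=(-1)^{|x_{h_i}|}$ by the sign rule in $H^{*\o n}$. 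So $v_i\cdot\mu_{(L_*,H_*)}=(-1)^{\lambda_i+1+|x_{h_i}|}\mu_{(L_*,H_*)}$, i.e. $v_i$ acts by $\a_{(L_*,H_*)}(v_i)$. Feeding these two modifications — $N_{L_*}\rightsquigarrow N_{(L_*,H_*)}$ and $\a_{L_*}\rightsquigarrow\a_{(L_*,H_*)}$, hence $\xi_{L_*}\rightsquigarrow\xi_{(L_*,H_*)}=\varphi_{L_*}\cdot\a_{(L_*,H_*)}$ — into the Lehrer--Solomon computation would give $\chi_{E_*^*(L_*,H_*)}=\mathrm{Ind}_{Z_{(L_*,H_*)}}^{\mc{S}_n}(\xi_{(L_*,H_*)})$. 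A convenient repackaging: first reduce by parabolic induction to a single repeated marked bamboo, where the correction is visibly one $\varepsilon$-twist, by $(-1)^{|x_h|}$, of the symmetric group permuting the copies; transitivity of induction then yields the general case.

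The step I expect to be the real obstacle is the Lehrer--Solomon combinatorial core that this plan imports rather than reproves: for an arbitrary $\sigma\in\mc{S}_n$, controlling the diagonal entries of the matrix of $\sigma$ in the monotonic-forest basis once the broken-circuit relations have been applied to $\sigma\cdot\mu_{(L_*,H_*)}$, and identifying the resulting class function as the induced character. The genuinely new verifications are the sign at $v_i$ above and the bookkeeping that $\dim E_*^*(L_*,H_*)=[\mc{S}_n:Z_{(L_*,H_*)}]$: coinciding marks leave the relevant bamboos interchangeable — recorded by the factor $N_{(L_*,H_*)}$ — while distinct marks rigidify them, which is precisely the gap between $Z_{(L_*,H_*)}$ and the centraliser of $c_{L_*}$, so the number of monotonic marked forests of type $(L_*,H_*)$ is $n!/|Z_{(L_*,H_*)}|$, matching $\dim\mathrm{Ind}_{Z_{(L_*,H_*)}}^{\mc{S}_n}(\xi_{(L_*,H_*)})$.
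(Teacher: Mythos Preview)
Your proposal is correct and follows essentially the same approach as the paper: both defer the combinatorial core to Lehrer--Solomon, observe that the marks are inert for the $C_{L_*}$-action (so $\varphi_{L_*}$ is unchanged), and identify the only modification as the restriction $N_{L_*}\rightsquigarrow N_{(L_*,H_*)}$ together with the extra sign $(-1)^{|x_{h_i}|}$ in $\alpha_{(L_*,H_*)}(v_i)$. The paper organises this via the block decomposition $E_*^*(L_*,H_*)=\bigoplus_{\Lambda_*}E_*^*(\Lambda_*,H_*)$ and the stabiliser of the standard block, whereas you work directly with the cyclic generator $\mu_{(L_*,H_*)}$ and give the Koszul sign for $v_i$ and the dimension count $[\mc{S}_n:Z_{(L_*,H_*)}]$ explicitly; these are equivalent packagings of the same argument.
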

\begin{proof}
For a given partition $ \Lambda_*=(\Lambda_1,\Lambda_2,\ldots,\Lambda_t) $ of the set $\{1,2,\ldots,n\}$,
where $ |\Lambda_i|=\l_i$, let us denote by $E_*^*(\Lambda_*,H_*)$ the span of monomials associated to
the marked graphs having $ \Lambda_*$ the set of connected components and $ H_* $ the corresponding marks. It
is obvious that the symmetric group $\mc{S}_n$ acts transitively on the components of the direct sum
$$ E_*^*(L_*,H_*)=\mathop{\oplus}\limits_{\Lambda_*}E_*^*(\Lambda_*,H_*). $$
Now we fix the term $ E_*^*(\Lambda_*,H_*) $ with
$$ \Lambda_*=(\{1,\ldots,L_1\},\{L_1+1,\ldots,L_2\},\ldots,\{L_{t-1}+1,\ldots,L_t\}) $$
and we follow the proof of Lehrer-Solomon \cite{LeSo} Section 4: the subgroup of $\mc{S}_n$ leaving
$E_*^*(\Lambda_*,H_*)$ invariant is  $ Z_{(L_*,H_*)}=C_{L_*}\rtimes N_{(L_*,H_*)}$ (from the set $v_i$ of
permutations of connected components of equal size we have to consider only the permutations of components
with the same mark). The action of this subgroup onto $E_*^*(\Lambda_*,H_*)$ is a composition of:

C) the action of $ C_{L_*}$ on each component separately, and this is given componentwise by \ref{prop3.4}; its
character is $ \varphi_{L_*}$.

N) the action of $ N_{(L_*,H_*)} $ which permutes identical marked trees; the rules of changing the sign were
already explained.

The second and the third formulae of the theorem are direct consequences of the first one.
\end{proof}

\section{Proofs of proposition \ref{prop4} and proposition \ref{prop5} \label{section4}}

In this section we will give a proof of Propositions \ref{prop4} and \ref{prop5}. From the last section in \cite{AAB} we will use the $\mc{S}_n$-decomposition of $E_1^{2m-1}(X,n)\cong \mc{A}^1(n)$ and also the bases for the irreducible $\mc{S}_n$-submodules described in this paper.
\begin{prop}\cite{AAB}
 The structure of the $\mc{S}_n$-module $E_1^{2m-1}(X,n)$ is given by
 \[
 \begin{array}{rcl}
   E_1^{2m-1}(X,2) & \cong & V(2), \\
   E_1^{2m-1}(X,3) & \cong & V(3)\oplus V(2,1), \\
   E_1^{2m-1}(X,n) & \cong & V(n)\oplus V(n-1,1)\oplus V(n-2,2)\,\,\,\,\,\mbox{for}\,\,n\geq4.
 \end{array}
 \]

 \end{prop}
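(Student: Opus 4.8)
The plan is to reduce the statement to the representation theory already set up in Section \ref{section3}. First I would note that an element of the canonical basis of $E_1^{2m-1}(X,n)$ has cohomology coefficient $x_{H_*}$ of total degree $k-q(2m-1)=2m-1-(2m-1)=0$, so $x_{H_*}=1\o\cdots\o1$ and the module has basis $\{G_{ij}\mid 1\le i<j\le n\}$ with $\mc{S}_n$ acting by $\sigma\cdot G_{ij}=G_{\sigma(i)\sigma(j)}$ and no sign (the model imposes $G_{ji}=G_{ij}$). Hence $E_1^{2m-1}(X,n)$ is the permutation $\mc{S}_n$-module on the set of $2$-element subsets of $\{1,\ldots,n\}$, and in the language of Theorem \ref{thm2.8} it is the single type $E_1^{2m-1}\big((2,1^{(n-2)}),(1;1^{(n-2)})\big)$, there being a unique pair $(L_*,H_*)$ with $|L_*|=1$ and $|H_*|=0$. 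Proposition \ref{prop3.2}, applied with the length-$2$ bamboo carrying the unit mark and the remaining $n-2$ bamboos also carrying the (even-degree) unit mark, so that $V^{\e(n-2,1)}=V(n-2)$, then gives
$$E_1^{2m-1}(X,n)\cong \mbox{Ind}_{\mc{S}_2\times\mc{S}_{n-2}}^{\mc{S}_n}\big(V(2)\o V(n-2)\big).$$

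Next I would decompose this induced module, which is the Young permutation module $M^{(n-2,2)}$. By Young's rule it is $\bigoplus_{\l\vdash n}K_{\l,(n-2,2)}\,V(\l)$, and for $n\ge4$ the only shapes $\l$ admitting a semistandard filling with $n-2$ entries $1$ and two entries $2$ are $(n)$, $(n-1,1)$ and $(n-2,2)$, each with Kostka number $1$ (a third nonzero row would force two $2$'s into one column). This is exactly the claimed decomposition. The cases $n=2,3$ are degenerate, since $(n-2,2)$ is then not a partition, and I would handle them by hand: for $n=3$ the type is $((2,1),(1;1))$, giving $\mbox{Ind}_{\mc{S}_2\times\mc{S}_1}^{\mc{S}_3}(V(2)\o V(1))=M^{(2,1)}\cong V(3)\oplus V(2,1)$; for $n=2$ the module is one-dimensional with trivial action, hence $V(2)$.

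As an independent check I would compute the character: a $2$-subset is fixed by a permutation of cycle type $(i_1;i_2;\ldots;i_n)$ precisely when it consists of two fixed points or of one $2$-cycle, so $\chi(\sigma)={i_1\choose2}+i_2$; pairing this against the characters of $V(n)$, $V(n-1,1)$ and $V(n-2,2)$ — or, more quickly, observing $\langle\chi,\chi\rangle=3$ for $n\ge4$ while $\langle\chi,\mathbf 1\rangle=\langle\chi,\chi_{V(n-1,1)}\rangle=1$ and $\dim V(n-2,2)={n\choose2}-n$ — reproduces the same answer. I do not expect a real obstacle here: the core is the classical decomposition of the permutation module on $2$-subsets, and the only points needing attention are the reordering of $(n-2,2)$ in the exceptional low cases and, for a fully self-contained argument, checking that the type submodule identified above is precisely the one covered by Proposition \ref{prop3.2}.
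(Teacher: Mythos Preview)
Your argument is correct. The paper does not give its own proof of this proposition: it is quoted from \cite{AAB}, and the surrounding text only records that $E_1^{2m-1}(X,n)\cong\mc{A}^1(n)$ and imports both the decomposition and the explicit elements $G^n,\,G^n_{12},\,G_{1234}$ from that reference. So there is no in-paper proof to compare against.

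That said, your approach is a pleasant internalisation of the result within the machinery of Section~\ref{section3}: you recognise $E_1^{2m-1}$ as the single type $((2,1^{(n-2)}),(1;1^{(n-2)}))$, invoke Proposition~\ref{prop3.2} to get $\mbox{Ind}_{\mc{S}_2\times\mc{S}_{n-2}}^{\mc{S}_n}(V(2)\o V(n-2))$, and then apply Young's rule. This is exactly the permutation module on $2$-subsets, and your Kostka-number count and small-$n$ checks are accurate. The character verification $\chi(\sigma)={i_1\choose 2}+i_2$ is also correct and matches the style of the examples in Section~\ref{section3}. Nothing is missing.
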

We choose one nonzero element from each $\mc{S}_n$-submodule:
\[
\begin{array}{rcl}
  G^n        & = & \mathop{\sum}\limits_{i<j}G_{ij}\,\,\,\,\,\,\,\,\,\,\,\mbox{in}\,\,V(n), \\
  G^{n}_{12} & = &\mathop{\sum}\limits_{k\geq3}(G_{1k}-G_{2k})\,\,\,\,\,\,\,\,\,\,\,\mbox{in}\,\,V(n-1,1),  \\
  G_{1234}   & = & G_{14}-G_{13}+G_{23}-G_{24}  \,\,\,\,\,\,\,\,\,\,\,\mbox{in}\,\,V(n-2,2)
\end{array}
\]
and by direct computation we find non-zero differentials:

\begin{lem}
The images of the elements $G^n$ and $G^n_{12}$ under the composition $$E_1^{2m-1}\mathop{\longrightarrow}\limits^{d}E_0^{2m}\mathop{\longrightarrow}\limits^{pr}\mathop{\bigoplus}\limits_{i=1}^{n}p_i(H^{2m})$$ are given by
\[
\begin{array}{rcl}
  G^n & \mapsto & (n-1)\mathop{\sum}\limits_{i=1}^{n}p^*_i(w) \\
  G_{12}^n & \mapsto & (n-2)p^*_1(w)-(n-2)p^*_2(w).
\end{array}
\]
\end{lem}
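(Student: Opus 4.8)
The plan is to compute directly, using the definition $d(G_{ij})=p_{ij}^*(\Delta)$ with $\Delta=w\otimes 1+1\otimes w$ (so that $p_{ij}^*(\Delta)=p_i^*(w)+p_j^*(w)$), together with the relation $p_i^*(x)\,G_{ij}=p_j^*(x)\,G_{ij}$; but since we compose with the projection $pr$ onto $\bigoplus_i p_i^*(H^{2m})$, the relations in $H^{*\otimes n}$ involving products of two positive-degree classes are irrelevant, because $w$ has top degree $2m$ and any product $p_i^*(w)p_j^*(\text{something positive})$ lands outside $\bigoplus_i p_i^*(H^{2m})$ and is killed by $pr$. So effectively we only need to track the linear-in-$w$ part.

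First I would handle $G^n=\sum_{i<j}G_{ij}$. Applying $d$ gives $d(G^n)=\sum_{i<j}\bigl(p_i^*(w)+p_j^*(w)\bigr)$. For a fixed index $\ell$, the term $p_\ell^*(w)$ appears once for each pair $\{i,j\}$ containing $\ell$, and there are $n-1$ such pairs; hence $d(G^n)=(n-1)\sum_{\ell=1}^{n}p_\ell^*(w)$, which already lies in $\bigoplus_i p_i^*(H^{2m})$, so $pr$ acts as the identity on it. This gives the first formula.

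Next I would handle $G^n_{12}=\sum_{k\geq 3}(G_{1k}-G_{2k})$. Applying $d$ gives $d(G^n_{12})=\sum_{k\geq 3}\bigl(p_1^*(w)+p_k^*(w)-p_2^*(w)-p_k^*(w)\bigr)=\sum_{k\geq 3}\bigl(p_1^*(w)-p_2^*(w)\bigr)=(n-2)p_1^*(w)-(n-2)p_2^*(w)$, since the sum runs over $k=3,\dots,n$, which is $n-2$ values, and the $p_k^*(w)$ terms cancel in pairs. This is already in the target subspace, so applying $pr$ changes nothing, giving the second formula.

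There is essentially no obstacle here: the computation is a bookkeeping exercise once one observes that $d$ is a derivation vanishing on $H^{*\otimes n}$ and that $pr$ discards everything except the degree-$2m$ classes sitting on a single factor. The only point requiring a word of care is why the move $p_i^*(x)G_{ij}=p_j^*(x)G_{ij}$ does not interfere — but in these two particular elements the exterior generator carries no coefficient to begin with, so the relation is never invoked; and after differentiation the output is manifestly already in canonical form in $\bigoplus_i p_i^*(H^{2m})$, so the projection is transparent.
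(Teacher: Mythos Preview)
Your proof is correct and is exactly the direct computation the paper has in mind (the paper itself offers no proof beyond the phrase ``by direct computation''). One small imprecision: you write $\Delta=w\otimes 1+1\otimes w$, but in general the diagonal class has intermediate terms $\sum_a x_a\otimes y_a$ with $0<\deg x_a,\deg y_a<2m$; you do immediately explain that the projection $pr$ kills any term supported on two factors, which is precisely why these intermediate terms disappear, so the argument stands.
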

\begin{lem}\label{lem4}
Let $x$ and $y$ be cohomology classes of positive degree ($x$ in $\mc{B}$ and $y$ in the dual bases $\mc{B}^*$) such that $xy=w$. The
image of the element $G_{1234}$ under the composition
$$E_1^{2m-1}\mathop{\longrightarrow}\limits^d E_0^{2m}\mathop{\longrightarrow}\limits^{pr}\mathop{\bigoplus}\limits_{i<j}p^*_i(\mathbb{Q}(x))p^*_j(\mathbb{Q}(y))$$
is given by $$G_{1234}\mapsto p^*_1(x)p^*_4(y)-p^*_1(x)p^*_3(y)+p^*_2(x)p^*_3(y)-p^*_2(x)p^*_4(y).$$
\end{lem}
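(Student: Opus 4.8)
The proof is the ``direct computation'' promised in the text, and I would organize it as follows.

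First I would unwind the differential. Recall $d$ vanishes on $H^*(X)^{\otimes n}$ and $d(G_{ij})=p_{ij}^*(\Delta)$, where, expanded in the fixed basis $\mathcal{B}$ and its Poincar\'e dual $\mathcal{B}^*$, the diagonal class is $\Delta=\sum_a x_a\otimes y^a$ (its two extreme summands being $w\otimes 1$ and $1\otimes w$). Hence, for $i<j$,
\[
 d(G_{ij})=\sum_a p_i^*(x_a)\,p_j^*(y^a)=p_i^*(w)+p_j^*(w)+\sum_{x_a\neq 1,\,w}p_i^*(x_a)\,p_j^*(y^a).
\]
Since $x$ and $y$ have positive degree, $x=x_{a_0}$ and $y=y^{a_0}$ for a single ``middle'' index $a_0$ (the relation $xy=w$ forces $y$ to be the Poincar\'e dual of $x$). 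Applying $pr$, the projection of $E_0^{2m}$ onto the summand $\bigoplus_{i<j}p_i^*(\mathbb{Q}(x))\,p_j^*(\mathbb{Q}(y))$, annihilates $p_i^*(w)$, $p_j^*(w)$ and every middle term except the one indexed by $a_0$, so that
\[
 pr\bigl(d(G_{ij})\bigr)=p_i^*(x)\,p_j^*(y)\qquad (i<j).
\]

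Next I would reassemble the four terms of $G_{1234}=G_{14}-G_{13}+G_{23}-G_{24}$. The point is that $d(G_{ab})=p_{ab}^*(\Delta)$ is supported on the tensor factors in positions $a$ and $b$ only, so it can meet the slot $p_i^*(\mathbb{Q}(x))p_j^*(\mathbb{Q}(y))$ only when $\{i,j\}=\{a,b\}$; since the supports $\{1,4\}$, $\{1,3\}$, $\{2,3\}$, $\{2,4\}$ are pairwise distinct, the four contributions land in disjoint K\"unneth summands and no cancellation occurs. By linearity of $pr\circ d$ together with the previous display,
\[
 pr\bigl(d(G_{1234})\bigr)=p_1^*(x)p_4^*(y)-p_1^*(x)p_3^*(y)+p_2^*(x)p_3^*(y)-p_2^*(x)p_4^*(y),
\]
which is the assertion.

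There is no genuine obstacle here; the only point requiring attention is the sign of the K\"unneth component of $\Delta$ attached to the class $x$. With the normalization fixed above ($x_a y^b=\delta_{ab}w$ and $\Delta=\sum_a x_a\otimes y^a$) this sign is $+1$, so the signs $+,-,+,-$ in the answer are exactly those of $G_{1234}$; likewise no graded-commutativity sign intervenes, because in the canonical (Bezrukavnikov) ordering one always writes the factor on the smaller index first, i.e. $p_i^*(x)p_j^*(y)$ with $i<j$, matching the stated formula. A consistency check is the already-recorded computation of the images of $G^n$ and $G^n_{12}$, which uses the same mechanism with the roles reversed (there one retains the $p_i^*(w)$-terms and discards the middle ones).
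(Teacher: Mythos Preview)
Your proof is correct and is precisely the direct computation the paper has in mind; the paper itself does not spell out a proof of this lemma beyond the phrase ``by direct computation,'' and your argument carries out exactly that computation.
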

\begin{prop}\label{prop 5.4}
If $X$ is a projective manifold ($X\neq \mathbb{C}P^1$), then the ``first" differential is injective: $$d: E_1^{2m-1}(X,n)\rightarrowtail E_0^{2m}(X,n).$$
\end{prop}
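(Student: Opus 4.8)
The plan is to combine the $\mc{S}_n$-equivariance of $d$ with the fact that the source is multiplicity-free. By the proposition of \cite{AAB} recalled above, $E_1^{2m-1}(X,n)$ is a direct sum of pairwise non-isomorphic irreducibles, each occurring once: $V(2)$ if $n=2$, $V(3)\oplus V(2,1)$ if $n=3$, and $V(n)\oplus V(n-1,1)\oplus V(n-2,2)$ if $n\geq 4$. For any summand $V(\lambda)$, the kernel of $d|_{V(\lambda)}$ is an $\mc{S}_n$-submodule of $V(\lambda)$, so $d$ is either zero or injective on $V(\lambda)$; and when it is injective, $d(V(\lambda))$ is an irreducible submodule of $E_0^{2m}(X,n)$ isomorphic to $V(\lambda)$, hence lies in the $V(\lambda)$-isotypical component. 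Since the $\lambda$'s that occur are distinct, their isotypical components in $E_0^{2m}(X,n)$ intersect trivially, so if $d$ is nonzero on every summand the images are in direct sum and $d$ is injective on all of $E_1^{2m-1}(X,n)$. Thus it suffices to find, in each irreducible summand, one element with nonzero differential.

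For $V(n)$ (present whenever $n\geq 2$) and $V(n-1,1)$ (present whenever $n\geq 3$) this is immediate from the lemma above computing $d(G^n)$ and $d(G^n_{12})$: after projecting $E_0^{2m}(X,n)$ onto $\bigoplus_i p_i^*(H^{2m})$ one gets $d(G^n)\mapsto (n-1)\sum_{i=1}^n p_i^*(w)$ and $d(G^n_{12})\mapsto (n-2)(p_1^*(w)-p_2^*(w))$, and since the $n$ classes $p_i^*(w)$ are linearly independent canonical basis elements of $E_0^{2m}(X,n)$ these images are nonzero for $n\geq 2$, resp.\ $n\geq 3$. In particular the proposition is already established for $n=2,3$, with no restriction on $X$.

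The essential case is $V(n-2,2)$, which occurs precisely for $n\geq 4$ and is generated by $G_{1234}$. By Lemma~\ref{lem4}, whenever there exist $x\in\mc{B}$ and $y\in\mc{B}^*$ of positive degree with $xy=w$, the image of $d(G_{1234})$ in $\bigoplus_{i<j}p_i^*(\mathbb{Q}(x))p_j^*(\mathbb{Q}(y))$ equals $p_1^*(x)p_4^*(y)-p_1^*(x)p_3^*(y)+p_2^*(x)p_3^*(y)-p_2^*(x)p_4^*(y)$, an alternating sum of four distinct nonzero basis vectors of that space; hence $d(G_{1234})\neq 0$ once such a pair $(x,y)$ exists. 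This is exactly where $X\neq\mathbb{C}P^1$ enters: such a pair exists iff $\mc{B}$ contains a class of degree strictly between $0$ and $2m$, equivalently iff $B=\sum\b_i\geq 3$ (given such an $x=x_i$, take $y=y^i\in\mc{B}^*$, which satisfies $xy=w$ and has positive degree $2m-\deg x$). If $m\geq 2$ the hyperplane (Kähler) class is nonzero in $H^2(X)$ with $0<2<2m$, so $B\geq\b_0+\b_2+\b_{2m}\geq 3$; if $m=1$ then $X$ is a smooth projective curve and $X\neq\mathbb{C}P^1$ forces genus $\geq 1$, so $\b_1=2g\geq 2$ and $B\geq 4$. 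In either case a suitable pair exists, $d|_{V(n-2,2)}\neq 0$, and the proof is complete.

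The only nontrivial point, and the sole place where the geometry of $X$ is used, is the last step — that $X\neq\mathbb{C}P^1$ forces the existence of two positive-degree cohomology classes pairing to the fundamental class (equivalently $B\geq 3$) — which splits into the curve case (Poincar\'e duality on $H^1$ together with the hypothesis) and the case $m\geq 2$ (the hyperplane class). Everything else is formal: the Schur-type reduction of the first paragraph and the non-vanishing on $V(n)$ and $V(n-1,1)$ follow directly from the two lemmas already proved.
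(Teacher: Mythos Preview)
Your proof is correct and follows essentially the same approach as the paper's: reduce via Schur's lemma and the multiplicity-free decomposition of $E_1^{2m-1}(X,n)$ to checking $d\neq 0$ on each irreducible summand, then use the two preceding lemmas together with the case split $m\geq 2$ (K\"ahler/hyperplane class) versus $m=1$ (positive genus) to exhibit the required nonzero images. Your version is slightly more explicit than the paper's in two places---you spell out why the images of the distinct irreducibles are in direct sum (isotypical components), and you phrase the existence of the pair $(x,y)$ as the condition $B\geq 3$ and then pick $x=x_i\in\mc{B}$, $y=y^i\in\mc{B}^*$ directly, which matches the hypotheses of Lemma~\ref{lem4} verbatim---but the argument is the same.
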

\begin{proof}
Using twice the Schur lemma for the $\mc{S}_n$-morphisms $$V(n),V(n-1,1)\hookrightarrow E_1^{2m-1}(X,n)\stackrel{d}{\to} E_0^{2m}(X,n)$$ these two submodules have isomorphic images through $d$ in $E_0^{2m}$ and trivial kernels because these morphisms are non-zero: $d(G^n)\neq 0$, $d(G_{12}^n)\neq 0.$ If $X$ is of complex dimension $m$ greater than two, we can take in Lemma \ref{lem4} $x$ to be the K\"{a}hler class and $y=x^{m-1}$; if $X$ is a projective curve, but not the projective line, the equation $xy=w$ has also non trivial solutions.
\end{proof}
\begin{rem}\label{rem4}
In the remaining case, the differential $$d:E_1^1(\mathbb{C}P^1,n)\ra E_0^2(\mathbb{C}P^1,n)$$ is injective only for $n=2,3$; for $n\geq 4$ we obtain
$$H_1^1(F(\mathbb{C}P^1,n))\cong V(2)_n.$$
We will see in the last section that in the case of $\mathbb{C}P^1$, $n\geq 4$, the differential $E_q^q(\mathbb{C}P^1)\stackrel{d}{\to} E_{q-1}^{q+1}(\mathbb{C}P^1)$ has a nontrivial kernel for $q=1,2,\ldots, n-3$ and it is injective for $q=n-2,n-1$.
\end{rem}
\begin{rem} As a consequence of \cite{LaSt}, the Proposition \ref{prop 5.4} and the Proposition \ref{prop4} are true for any formal space $X$ whose rational cohomology satisfies Poincar\'{e} duality but not for cohomology spheres (for $n\geq 4$).
\end{rem}
Now we can give a proof of Proposition \ref{prop4} by induction on $q$.
\begin{proof} ({\textit{of the Proposition \ref{prop4}}})
Let us suppose that the differential $$d: E_{q-1}^{(q-1)(2m-1)}\ra E_{q-2}^{(q-1)(2m-1)+1}$$ is injective and let $u\in E_{q}^{q(2m-1)}$ be a non zero cocycle. Let $G_{ij}$ be the smallest exterior generator (in the reverse lexicographic order $G_{12}<G_{13}<G_{23}<G_{14}<\ldots <G_{n-1,n}$) which appears in a non zero monomial (of the canonical basis) in $u$: $u=G_{ij}y+z,$ $y\in E_{q-1}(G_{\a \beta}> G_{ij})\setminus \{0\}, z\in E_q(G_{\a \beta}> G_{ij}).$ In the right hand side of the equation $$0=d(u)=-G_{ij}dy+p^*_{ij}(\Delta)y+dz$$ the last two terms, $p^*_{ij}(\Delta )y$ and $dz$, the monomials in the canonical basis contain only factors $G_{\a\b}>G_{ij}$; therefore $dy=0$ by induction on $q$, and $y=0$, and this gives a contradiction.
\end{proof}
\begin{proof}(\textit {of the Proposition \ref{prop5}})
The monomials from the canonical basis lying on the top horizontal line are $p_1^*(x_h)G_{12}G_{i_23}\ldots G_{i_{n-1}n},$ where $|x_h|=i\in[0,2m]$ and $i_a\leq a.$ The next composition is an isomorphism: $$E_{n-1}^{(n-1)(2m-1)+i}\mathop{\ra}\limits^{d}E_{n-2}^{(n-1)(2m-1)+i+1}\mathop{\ra}\limits^{pr} \mathop{\mathop{\bigoplus}\limits_{I_*=(i_2,\ldots,i_{n-1})}}\limits_{i_a\leq a}p_1^*(H^i)p_2^*(H^{2m})G_{i_23}\ldots G_{i_{n-1}n}$$
$$p_1^*(x_h)G_{12}G_{i_23}\ldots G_{i_{n-1}n}\mapsto p_1^*(x_h)p_2^*(w)G_{i_23}\ldots G_{i_{n-1}n},$$ therefore the differential is injective on the top horizontal line.
\end{proof}

\section{An acyclic subalgebra of the Kri\v{z} model}\label{5}

In \cite{BMP} is introduced a quotient of the Kri\v{z} model, denoted by $J_n$, which is quasi isomorphic to $E_*^*(X,n)$, but the
corresponding kernel is not $\mc{S}_n$-stable. We will identify an acyclic subcomplex of $E_*^*(X,n)$, denoted by $E_*^*(w(X,n))$ (or
simply by $E_*^*(w)$), which is also an $\mc{S}_n$-submodule and a subalgebra, giving another smaller complex quasi isomorphic to the Kri\v{z} model:
$$SE_*^*(X,n)=E_*^*\diagup E_*^*(w),\,\,\,H^*(E_*^*(X,n))\cong H^*(SE_*^*(X,n)).$$
The last isomorphism is now $\mc{S}_n$-equivariant.

We start with a well known result in the theory of hyperplane arrangements, see for example \cite{OT}.
\begin{defn}
The Arnold differential algebra $(\mc{A}^*(n),\partial)$ is defined by
$$\mc{A}^*(n)=\bigwedge(G_{ij},1\leq i<j\leq n)\diagup (G_{ij}G_{ik}-G_{ij}G_{jk}+G_{ik}G_{jk})$$
(the generators $G_{ij}$ have degree 1) and the differential of degree -1 is given by $\partial G_{ij}=1.$
\end{defn}
\begin{prop}
The Arnold algebra $(\mc{A}^*(n),\partial)$ is acyclic.
\end{prop}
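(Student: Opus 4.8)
The plan is to produce an explicit contracting homotopy, which is the standard argument for exterior-algebra complexes of this type. The statement should be read for $n\ge 2$ (for $n=1$ one has $\mc{A}^*(1)=\mathbb{Q}$, which is clearly not acyclic), and this is exactly the range in which the proposition is applied.

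First I would record that $\partial$ is genuinely well defined on the quotient. Extending $\partial G_{ij}=1$ to an antiderivation of degree $-1$ on the free exterior algebra on the $G_{ij}$, a direct computation gives $\partial(G_{ij}G_{ik}-G_{ij}G_{jk}+G_{ik}G_{jk})=(G_{ik}-G_{ij})-(G_{jk}-G_{ij})+(G_{jk}-G_{ik})=0$, so $\partial$ descends to $\mc{A}^*(n)$; and $\partial^2=0$ holds since it vanishes on the generators ($\partial(\partial G_{ij})=\partial(1)=0$) and $\partial^2$ is a derivation.

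Next, fix the generator $G_{12}$ and let $h\colon \mc{A}^*(n)\to \mc{A}^{*+1}(n)$ be left multiplication, $h(\omega)=G_{12}\wedge\omega$. Being multiplication in the algebra itself, $h$ is automatically compatible with the defining relations, so no further check is needed. Using that $\partial$ is an antiderivation with $\partial G_{12}=1$, for homogeneous $\omega$ we get $\partial(G_{12}\wedge\omega)=\omega-G_{12}\wedge\partial\omega$, that is $(\partial h+h\partial)(\omega)=\omega$; hence $\partial h+h\partial=\mathrm{id}_{\mc{A}^*(n)}$.

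From this identity acyclicity is immediate: if $\partial\omega=0$ then $\omega=\partial(h\omega)$, so $H^k(\mc{A}^*(n),\partial)=0$ for $k\ge 1$; and in degree $0$ one has $\mc{A}^0(n)=\mathbb{Q}$ while $\partial\colon\mc{A}^1(n)\to\mc{A}^0(n)$ is surjective (it sends $G_{12}\mapsto 1$), so $H^0=0$ as well. There is no real obstacle here; the only points needing care are the sign in the antiderivation/Leibniz rule — which is precisely what forces the Arnold relations into $\ker\partial$ — and the trivial exclusion of the degenerate case $n=1$.
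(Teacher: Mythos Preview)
Your proof is correct and follows exactly the paper's approach: the paper also defines the homotopy $h(\gamma)=G_{12}\gamma$ and verifies $\partial h+h\partial=\mathrm{id}_{\mc{A}^*}$. You have simply supplied the details the paper leaves implicit (well-definedness of $\partial$ on the quotient, the Leibniz computation, and the $n\geq 2$ caveat).
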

\begin{proof}
Define the homotopy $h:\mc{A}^*\ra \mc{A}^{*+1}$ by $h(\gamma)=G_{12}\gamma$ and verify that $\partial h+h\partial=id_{\mc{A}^*}$.
\end{proof}
We denote by $E_*^{Top}(X,n)$ the submodule of the Kri\v{z} model $(E_*^*(X,n),d)$ given by the sum of the submodules of maximal total degree in
each $q$-exterior degree
$$E_*^{Top}(X,n)=\mathop{\bigoplus}\limits_{q=0}^{n-1} E_q^{Top}(X,n)=\mathop{\bigoplus}\limits_{q=0}^{n-1} E_q^{2mn-q}(X,n).$$
It is obvious that $E_*^{Top}(X,n)$, the right side of the trapezoid, is a subcomplex and an ideal of $E_*^*(X,n)$.
\begin{prop}\label{prop5.2}
There is an isomorphism of chain complexes of $\mc{S}_n$-modules $$(\mc{A}^*(n),\partial)\cong(E_*^{Top}(X,n),d).$$ In particular $$H^*(E^{Top}_*(X,n),d)=0.$$
\end{prop}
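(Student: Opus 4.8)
The plan is to exhibit an explicit isomorphism of chain complexes of $\mc{S}_n$-modules $\Psi\colon(\mc{A}^*(n),\partial)\to(E_*^{Top}(X,n),d)$; the vanishing $H^*(E_*^{Top}(X,n),d)=0$ then follows from the acyclicity of the Arnold algebra proved just above. The first step is to make the canonical basis of $E_q^{Top}(X,n)=E_q^{2mn-q}(X,n)$ explicit: a basis element $x_{H_*}G_{I_*J_*}$ satisfies $\deg x_{H_*}=(2mn-q)-q(2m-1)=2m(n-q)$, and since exactly $n-q$ of the tensor factors of $x_{H_*}$ are unconstrained (those indexed outside $J_*$) while each carries cohomological degree at most $2m$, every one of them must equal the fundamental class $w$. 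So on the top row the marking is forced, and $\dim E_q^{Top}(X,n)$ equals the number of canonical exterior monomials with $q$ factors, that is, $\dim\mc{A}^q(n)$.

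Guided by this I would define $\Psi$ by decorating with $w$: for an exterior monomial $\mathbf{G}=G_{a_1b_1}\cdots G_{a_qb_q}$ set $\Psi(\mathbf{G})=D_{\mathbf{G}}\cdot\mathbf{G}$, where $D_{\mathbf{G}}$ is a product of one factor $p_r^*(w)$ for each connected component of the graph of $\mathbf{G}$, with $r$ an arbitrary vertex of that component. The relation $p_j^*(x)G_{ij}=p_i^*(x)G_{ij}$ shows that $D_{\mathbf{G}}\cdot\mathbf{G}$ is independent of the chosen vertices $r$ (one slides $w$ along paths), and because the three monomials $G_{ij}G_{ik},\ G_{ij}G_{jk},\ G_{ik}G_{jk}$ (times any common factor) determine the same partition into connected components, $\Psi$ carries the Arnold relation into the Kri\v{z} relations; hence $\Psi$ is well defined on $\mc{A}^q(n)$. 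Its image lies in $E_q^{Top}$ since $q(2m-1)+(n-q)2m=2mn-q$, and on the canonical bases $\Psi(G_{I_*J_*})=\big(\bigotimes_{a\notin J_*}p_a^*(w)\big)G_{I_*J_*}$ is manifestly a bijection onto the canonical basis of $E_q^{Top}$, so $\Psi$ is a linear isomorphism. Equivariance is then formal: a permutation $\sigma$ sends the graph of $\mathbf{G}$ to that of $\sigma\mathbf{G}$ and $\sigma\, p_r^*(w)=p_{\sigma r}^*(w)$, so $\sigma D_{\mathbf{G}}$ is an admissible decoration of $\sigma\mathbf{G}$ and $\sigma\Psi(\mathbf{G})=\Psi(\sigma\mathbf{G})$.

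The main work --- and the step I expect to be the real obstacle --- is to check that $\Psi$ intertwines the differentials. As $d$ kills $H^{*\o n}$ and is a derivation, $d\Psi(\mathbf{G})=D_{\mathbf{G}}\cdot d\mathbf{G}=\sum_c(-1)^{c-1}D_{\mathbf{G}}\,p_{a_cb_c}^*(\Delta)\,G_{a_1b_1}\cdots\widehat{G_{a_cb_c}}\cdots G_{a_qb_q}$, and I must identify the $c$-th summand with $\Psi$ of $G_{a_1b_1}\cdots\widehat{G_{a_cb_c}}\cdots G_{a_qb_q}$. Deleting the edge $\{a_c,b_c\}$ splits its connected component into two pieces $P\ni a_c$ and $Q\ni b_c$, while $D_{\mathbf{G}}$ carries a single $w$ on some vertex $r$ of that component, say $r\in P$. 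Writing $\Delta=\sum_i x_i\o y^i$ for the Poincar\'{e} dual bases $\mc{B},\mc{B}^*$, inside the product one slides $p_{a_c}^*(x_i)$ onto $r$; since $w\cdot x_i=0$ unless $x_i=1$ (and then $y^i=w$), only one K\"{u}nneth summand survives and produces a $w$ on $r\in P$ together with a $w$ on $b_c\in Q$ --- that is, exactly one $w$ on each connected component of the graph obtained by deleting $\{a_c,b_c\}$. Up to the defining relations this element equals $D_{\mathbf{G}/G_{a_cb_c}}\cdot\big(G_{a_1b_1}\cdots\widehat{G_{a_cb_c}}\cdots G_{a_qb_q}\big)$, whence $d\Psi(\mathbf{G})=\sum_c(-1)^{c-1}\Psi\big(G_{a_1b_1}\cdots\widehat{G_{a_cb_c}}\cdots G_{a_qb_q}\big)=\Psi(\partial\mathbf{G})$. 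Assembling the pieces, $\Psi$ is an $\mc{S}_n$-equivariant isomorphism of chain complexes, and therefore $H^*(E_*^{Top}(X,n),d)\cong H^*(\mc{A}^*(n),\partial)=0$.
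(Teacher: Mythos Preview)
Your proof is correct and follows essentially the same approach as the paper. The paper defines the isomorphism directly on the canonical basis by $f(G_{I_*J_*})=\prod_{h\notin J_*}p_h^*(w)\,G_{I_*J_*}$, which is precisely your $\Psi$ with the vertex $r$ in each component chosen to be the root; the differential compatibility is verified via the same key identity $p_{i_a}^*(w)\,p_{i_aj_a}^*(\Delta)=p_{i_a}^*(w)\,p_{j_a}^*(w)$ that you isolate in your ``only one K\"unneth summand survives'' step. Your version is a little more explicit about why the map is well defined independently of the chosen vertex in each component and about the combinatorics of component-splitting, but the substance is identical.
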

\begin{proof}
Using the standard basis $\{G_{I_*J_*}=G_{i_1j_1}G_{i_2j_2}\ldots G_{i_qj_q}\}$ in $\mc{A}^q(n)$ and the basis $\{\mathop{\prod}\limits_{h\not\in J_*}p^*_h(w)G_{I_*J_*}\}$ in $E_q^{Top}(X,n)$ (here $2\leq j_1<j_2<\ldots<j_q\leq n, 1\leq i_a<j_a$) we define the isomorphism $$f:\mc{A}^q\ra E_q^{Top}\,\,\,\mbox{by}\,\,\, f(G_{I_*J_*})=\mathop{\prod}\limits_{h\not\in J_*}p^*_h(w)G_{I_*J_*}.$$
Obviously $f$ is $\mc{S}_n$-equivariant (the degree of $w$ is even) and $f$ preserves the differentials:
\[
\begin{array}{rcl}
  df(G_{I_*J_*}) & = & d(\mathop{\prod}\limits_{h\not\in J_*}p^*_h(w)G_{I_*J_*}) \\
                 & = & \mathop{\sum}\limits_{a=1}^q (-1)^{a+1}\mathop{\prod}\limits_{h\not\in J_*}p^*_h(w)\cdot p^*_{i_aj_a}(\Delta)G_{i_1j_1}\ldots \widehat{G_{i_aj_a}} \ldots G_{i_qj_q} \\
                 & = & \mathop{\sum}\limits_{a=1}^q (-1)^{a+1}\mathop{\prod}\limits_{h\not\in J_*}p^*_h(w)\cdot p^*_{j_a}(w)G_{i_1j_1}\ldots \widehat{G_{i_aj_a}}\ldots G_{i_qj_q}  \\
                 & = & \mathop{\sum}\limits_{a=1}^q (-1)^{a+1}\mathop{\prod}\limits_{h\not\in J_*\setminus \{j_a\}}p^*_h(w)G_{i_1j_1}\ldots \widehat{G_{i_aj_a}}\ldots G_{i_qj_q} \\
                 & = & f(\mathop{\sum}\limits_{a=1}^q (-1)^{a+1}G_{i_1j_1}\ldots \widehat{G_{i_aj_a}}\ldots G_{i_qj_q})  \\
                 & = & f\partial(G_{I_*J_*}).
\end{array}
\]
For the third equality we used $i_a\not\in J_*$ and the equality $$p^*_{i_a}(w)p^*_{i_aj_a}(\Delta)= p^*_{i_a}(w)p^*_{j_a}(w).$$
\end{proof}
\begin{proof}(\textit {of the Proposition \ref{prop6}})
Now this is obvious.
\end{proof}
\noindent Now we will define three acyclic subcomplexes which generalize the previous subcomplex $(E_*^{Top}(X,n),d)$. For a fixed non empty subset $A\subset\{1,2,\ldots,n\}$ of cardinality $|A|=a\geq2$ and a fixed sequence $\b$ of length $b=n-a$, $\b=(x_1,x_2,\ldots,x_b)$, where all the elements $x_j$ belong to the fixed basis $\mc{B}$ and are different from $w$, we denote the increasing sequence of elements in $\{1,2,\ldots,n\}\setminus A$ by $b_1<b_2<\ldots<b_b$, the product $\mathop{\prod}\limits_{j=1}^{b}p_{b_j}^{*}(x_j)$ by $p^*(\b)$, and its degree $\mathop{\sum}\limits_{j=1}^{b}\deg(x_j)$ by $|\b|$. Now we define subspace $$E_{*}^{Top}(A,\b)=\mathop{\sum}\limits_{q=0}^{a-1}E_q^{2ma-q+|\b|}(A,\b)$$
by
$$E_q^{2ma-q+|\b|}(A,\b)=\mathbb{Q}\lan\mathop{\prod}\limits_{i\in A\setminus J_*}p_{i}^{*}(w)p^*(\b)G_{I_*J_*}\mid I_*\cup J_*\subset A, |J_*|=q\ran$$
(in words: the scalars in the ``complementary positions", given by $\b$, should be different from $w$, on the ''forbidden positions", corresponding to
$J_*$, there is only $1$, and all the other ``possible positions" should be filled with the top class $w$).
\begin{prop}\label{prop5.3}
For any $A$ and $\b$ as before, the space $E_*^{Top}(A,\b)$ is an acyclic subcomplex of the Kri\v{z} model.
\end{prop}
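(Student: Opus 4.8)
The plan is to mimic the proof of Proposition \ref{prop5.2}, which is the special case $A=\{1,\dots,n\}$: I will realise $E_*^{Top}(A,\b)$ as a chain complex isomorphic to the Arnold complex on the index set $A$ and then invoke the acyclicity of the latter. Write $\mc{A}^*(A)$ for the Arnold algebra on the generators $G_{ij}$, $i,j\in A$; it is isomorphic to $\mc{A}^*(a)$ with $a=|A|$, and its canonical basis is the set of monotone-forest monomials $G_{I_*J_*}$ with $I_*\cup J_*\subseteq A$, $j_1<\dots<j_q$, $i_c<j_c$. I would define a linear map
$$f\colon \mc{A}^*(A)\longrightarrow E_*^{Top}(A,\b),\qquad f(G_{I_*J_*})=\Big(\prod_{i\in A\setminus J_*}p^*_i(w)\Big)\,p^*(\b)\,G_{I_*J_*},$$
carrying the cohomological-degree-$q$ part $\mc{A}^q(A)$ into the exterior-degree-$q$ part $E_q^{2ma-q+|\b|}(A,\b)$. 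By the very description of $E_q^{2ma-q+|\b|}(A,\b)$ this $f$ sends the canonical basis of $\mc{A}^q(A)$ bijectively onto the displayed spanning set of $E_q^{2ma-q+|\b|}(A,\b)$, hence is a linear isomorphism.

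The heart of the matter is to check that $f$ intertwines the Arnold differential $\partial$ (with $\partial G_{ij}=1$) and the Kri\v{z} differential $d$, i.e.\ $d\circ f=f\circ\partial$ on canonical basis elements. This single identity does both jobs at once: it gives $d\big(E_*^{Top}(A,\b)\big)=f\big(\partial\mc{A}^*(A)\big)\subseteq f\big(\mc{A}^*(A)\big)=E_*^{Top}(A,\b)$, so $E_*^{Top}(A,\b)$ is a subcomplex, and it exhibits $f$ as an isomorphism of chain complexes. Since $d$ is a graded derivation vanishing on $H^{*\o n}$ and the coefficient $\prod_{i\in A\setminus J_*}p^*_i(w)\,p^*(\b)$ has even total degree, the derivation computation of Proposition \ref{prop5.2} yields
$$d\,f(G_{I_*J_*})=\sum_{a=1}^{q}(-1)^{a+1}\Big(\prod_{i\in A\setminus J_*}p^*_i(w)\Big)\,p^*(\b)\,p^*_{i_aj_a}(\Delta)\,G_{i_1j_1}\dots\widehat{G_{i_aj_a}}\dots G_{i_qj_q},$$
so it suffices to prove, for each $a$, the term-by-term identity
$$\Big(\prod_{i\in A\setminus J_*}p^*_i(w)\Big)p^*_{i_aj_a}(\Delta)\,G_{i_1j_1}\dots\widehat{G_{i_aj_a}}\dots G_{i_qj_q}=p^*_{j_a}(w)\Big(\prod_{i\in A\setminus J_*}p^*_i(w)\Big)G_{i_1j_1}\dots\widehat{G_{i_aj_a}}\dots G_{i_qj_q}.$$
Since $j_a\in J_*\cap A$ one has $(A\setminus J_*)\cup\{j_a\}=A\setminus(J_*\setminus\{j_a\})$, and the deleted monomial $G_{i_1j_1}\dots\widehat{G_{i_aj_a}}\dots G_{i_qj_q}$ is again a canonical monotone-forest monomial whose $j$-set is $J_*\setminus\{j_a\}$ (deleting the edge $(i_a,j_a)$ splits the tree of $j_a$ into two monotone subtrees); thus the right-hand side is exactly $f$ applied to the $a$-th term of $\partial G_{I_*J_*}$, and $d f=f\partial$ follows.

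To prove the term-by-term identity I would write the diagonal in K\"unneth form, $\Delta=\sum_k x_k\o y^k$, so that $p^*_{i_aj_a}(\Delta)=\sum_k p^*_{i_a}(x_k)\,p^*_{j_a}(y^k)$ (signs will not matter: all but the $x_k=1$ summand will be killed), the two extreme summands being $p^*_{j_a}(w)$ from $x_1=1,\ y^1=w$ and $p^*_{i_a}(w)$ from $x_B=w,\ y^B=1$, with every intermediate $x_k$ of strictly positive degree. Because $I_*\cup J_*\subseteq A$, the whole connected component of $i_a$ in the monotone forest of $G_{I_*J_*}$ lies in $A$; let $r$ be its minimal vertex. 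Then $r$ is a root, so $r\in A\setminus J_*$ and $p^*_r(w)$ occurs in $\prod_{i\in A\setminus J_*}p^*_i(w)$; moreover all edges on the increasing path from $i_a$ up to $r$ survive in the deleted monomial, since the only deleted edge $(i_a,j_a)$ points away from $r$. Repeatedly using the relation $p^*_j(x)G_{ij}=p^*_i(x)G_{ij}$ I can transport $p^*_{i_a}(x_k)$ along that path and replace it by $p^*_r(x_k)$ in every summand; then $p^*_r(w)\,p^*_r(x_k)=p^*_r(wx_k)$ vanishes unless $\deg x_k=0$. This annihilates every summand except the one with $x_k=1$, which survives (trivially, as $p^*_{i_a}(1)=1$) and contributes precisely $p^*_{j_a}(w)\prod_{i\in A\setminus J_*}p^*_i(w)$ times the deleted monomial; in particular the summand coming from $x_B=w$ also dies, transport turning it into $p^*_r(w)^2=p^*_r(w^2)=0$. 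This establishes the identity, hence $d f=f\partial$.

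Finally, since $\mc{A}^*(A)\cong\mc{A}^*(a)$ with $a=|A|\geq2$ is acyclic (the acyclicity of the Arnold differential algebra recalled just before Proposition \ref{prop5.2}) and $f$ is an isomorphism of chain complexes, $E_*^{Top}(A,\b)$ is an acyclic subcomplex of the Kri\v{z} model, as claimed. I expect the only genuinely delicate point to be the term-by-term identity above, and within it the case where $i_a$ is an \emph{interior} vertex of its component, so that $p^*_{i_a}(w)$ does not literally occur among the $p^*_i(w)$ and one must first push the coefficient along the path to the root before invoking $w^2=0$ and $wx_k=0$ in positive degree; the subsidiary check that the deleted monomial is again a canonical basis element, so that $f$ may legitimately be applied to it, is the only other point needing a word.
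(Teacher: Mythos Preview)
Your argument follows exactly the paper's route---realise $E_*^{Top}(A,\b)$ as the Arnold complex $\mc{A}^*(A)\cong\mc{A}^*(a)$ via the linear isomorphism $f$ and transfer acyclicity---and your transport-to-the-root justification of the term-by-term identity is in fact more careful than the paper's own proof of Proposition~\ref{prop5.2}, which tacitly uses $i_a\notin J_*$ (not always true). One small slip: the coefficient $\prod_{i\in A\setminus J_*}p^*_i(w)\,p^*(\b)$ need \emph{not} have even total degree---its parity is that of $|\b|$---so in general the Leibniz rule gives $d\circ f=(-1)^{|\b|}\,f\circ\partial$; this is harmless, since a global sign leaves both the subcomplex property and the acyclicity intact (indeed the paper itself inserts an unexplained sign $(-1)^{\b}$ into its isomorphism for this very reason).
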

\begin{proof}
By definition $E_*^{Top}(A,\b)$ is the direct sum of its subspaces $E_q^{2ma-q+|\b|}(A,\b)$ and it is stable under the differential:
\[
\begin{array}{cc}
d(\mathop{\prod}\limits_{i\in A\setminus J_*}p_{i}^{*}(w)p^*(\b)G_{I_*J_*}) & = \mathop{\sum}\limits_{j_h\in J_*}\pm \mathop{\prod}\limits_{i\in A\setminus J_*}p_{i}^{*}(w)p_{j_h}^*(w)p^*(\b)G_{I_*\setminus\{i_h\}J_*\setminus \{j_h\}}=\\
          & = \mathop{\sum}\limits_{j_h\in J_*}\pm \mathop{\prod}\limits_{i\in A\setminus (J_*\setminus\{j_h\})}p_{i}^{*}(w)p^*(\b)G_{I_*\setminus\{i_h\}J_*\setminus \{j_h\}}.
\end{array}
\]
This subcomplex is acyclic because of the isomorphism $$(E_*^{Top}(A,\b),d)\cong (\mc{A}^*(a),\partial)$$ given by
$$\mathop{\prod}\limits_{i\in A\setminus J_*}p_{i}^{*}(w)p^*(\b)G_{I_*J_*}\leftrightarrow (-1)^{\b}G_{I_*J_*},$$
the differential of $\b$ is zero and the compatibility of the differentials $d,\partial$ was checked in the proof of \ref{prop5.3}.
\end{proof}
\begin{example}
If $A=\{1,2,\ldots,n\}$, then $\b$ is the empty sequence, $p^*(\b)=1$ and $E_*^{Top}(\{1,2,\ldots,n\},\phi)=E_*^{Top}(X,n).$
\end{example}
Now we fix a number $a$ from 2 to $n$ and a sequence $\b=(x_1,\ldots, x_b)$ of length $b=n-a$ as before. We say that the sequence $\b'=(y_1,\ldots, y_b)$ is similar to $\b$, $\b\sim \b'$, if there is a permutation $\sigma\in\mc{S}_b$ such that $y_i=x_{\sigma(i)}$ for $i=1,2,\ldots,b$. We define a new subspace: $$E_*^{Top}(a,\b)=\mathop{\mathop{\sum}\limits_{|A|=a,}}\limits_{\b'\sim\b}E_*^{Top}(A,\b').$$
\begin{prop}
For any number $a$ and a sequence $\b$ as before, the space $E_*^{Top}(a,\b)$ is an acyclic subcomplex and $\mc{S}_n$-invariant.
\end{prop}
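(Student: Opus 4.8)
The plan is to reduce everything to Proposition \ref{prop5.3}, which already gives that each individual piece $E_*^{Top}(A,\b')$ is an acyclic subcomplex of the Kri\v{z} model. Since $E_*^{Top}(a,\b)$ is by construction a sum of such subcomplexes, it is automatically a subcomplex of $(E_*^*(X,n),d)$; so the two substantive points are $\mc{S}_n$-invariance and acyclicity.

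For $\mc{S}_n$-invariance I would observe that the symmetric group permutes the index set $\{(A,\b')\mid |A|=a,\ \b'\sim\b\}$: for $\s\in\mc{S}_n$ put $\s\cdot A=\s(A)$ (again of cardinality $a$), and let $\s\cdot\b'$ be the sequence of marks on $\{1,\dots,n\}\setminus\s(A)$ obtained by transporting $\b'$ along $\s$; this is a rearrangement of $\b'$, hence similar to $\b$. Using the description of monomials via marked forests from Section \ref{section2}, a canonical generator $\prod_{i\in A\setminus J_*}p_i^*(w)\,p^*(\b')\,G_{I_*J_*}$ is precisely a marked forest all of whose connected components lie inside $A$ and carry total mark $w$, whose remaining $A$-vertices are isolated and carry $w$, and whose $A$-complement carries $\b'$; applying $\s$ just permutes vertices and marks and turns such a configuration into one of the same shape for $(\s(A),\s\cdot\b')$. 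Hence $\s\big(E_*^{Top}(A,\b')\big)=E_*^{Top}(\s(A),\s\cdot\b')$, and summing over all pairs $(A,\b')$ shows that $E_*^{Top}(a,\b)$ is $\mc{S}_n$-stable.

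For acyclicity the key point I would establish is that the sum defining $E_*^{Top}(a,\b)$ is in fact a \emph{direct} sum of chain complexes. Indeed, each generator $\prod_{i\in A\setminus J_*}p_i^*(w)\,p^*(\b')\,G_{I_*J_*}$ is a member of the canonical (Bezrukavnikov) basis, and from such a monomial the pair $(A,\b')$ can be reconstructed: the positions carrying the top class $w$ are exactly $A\setminus J_*$ (complement positions carry elements of $\mc{B}\setminus\{w\}$, and $J_*$-positions carry $1\neq w$), so $A$ is the union of the set of $w$-positions with the vertex set $I_*\cup J_*$ of $G_{I_*J_*}$, and $\b'$ is read off as the sequence of marks on the complement of $A$, in increasing order of position. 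Therefore the generators attached to distinct pairs $(A,\b')$ are distinct basis vectors, the summands $E_*^{Top}(A,\b')$ meet only in $0$, and $E_*^{Top}(a,\b)=\bigoplus_{|A|=a,\ \b'\sim\b}E_*^{Top}(A,\b')$ as a complex. Acyclicity then follows termwise from Proposition \ref{prop5.3}.

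The only genuinely delicate point is the directness of the sum, i.e. verifying that the spanning monomials of the various $E_*^{Top}(A,\b')$ really are honest, pairwise distinct elements of the canonical basis and that $(A,\b')$ is recoverable from each one; once this is in place, everything else (closure under $d$, the permutation action on the index set, and the passage from per-piece acyclicity to global acyclicity) is formal given Proposition \ref{prop5.3}.
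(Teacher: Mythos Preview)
Your proposal is correct and follows essentially the same approach as the paper: $\mc{S}_n$-invariance via $\sigma(E_*^{Top}(A,\b'))=E_*^{Top}(\sigma A,\sigma\b')$, and acyclicity via the direct sum decomposition $E_*^{Top}(a,\b)=\bigoplus_{|A|=a,\ \b'\sim\b}E_*^{Top}(A,\b')$ together with Proposition~\ref{prop5.3}. In fact you supply more than the paper does at this point: the paper simply asserts the direct sum here and only later, in the proof of Proposition~\ref{prop5.7}, gives the recovery argument $A=\{i\mid x_i=w\}\cup J_*$ that you already spell out.
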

\begin{proof}
The space is $\mc{S}_n$-invariant by construction:
$$\sigma(E_*^{Top}(A,\b))=E_*^{Top}(\sigma A,\sigma\b)=E_*^{Top}(\sigma A,\b')$$ and the acyclicity is a consequence of the direct sum decomposition:
$$(E_*^{Top}(a,\b),d)=\mathop{\mathop{\bigoplus}\limits_{|A|=a}}\limits_{\b'\sim\b}(E_*^{Top}(A,\b'),d).$$
\end{proof}
Finally, we take the whole collection of these subcomplexes: $$E_*^{*}(w(X,n))=\mathop{\sum}\limits_{a=2}^{n}\,\,\mathop{\mathop{\sum}\limits_{\b \mbox{\tiny{ of length}}}}\limits_{n-a}E_*^{Top}(a,\b).$$
\begin{prop}\label{prop5.7}
The space $E_*^*(w(X,n))$ is an acyclic, $\mc{S}_n$-invariant subcomplex.
\end{prop}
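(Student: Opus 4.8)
The plan is to realize $E_*^*(w(X,n))$ as a \emph{direct} sum of the acyclic subcomplexes $E_*^{Top}(A,\b)$ that have already been produced, so that acyclicity follows at once, while $\mc{S}_n$-invariance and the subcomplex property come for free. For the latter two: by construction $E_*^*(w(X,n))=\sum_{a=2}^{n}\sum_{\b}E_*^{Top}(a,\b)$ is a sum of $\mc{S}_n$-invariant subcomplexes by the previous proposition, and a sum of $\mc{S}_n$-invariant subcomplexes is again an $\mc{S}_n$-invariant subcomplex; so the whole content is acyclicity.

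Unwinding one more level, $E_*^*(w(X,n))=\sum_{(A,\b)}E_*^{Top}(A,\b)$, where $A$ runs over the subsets of $\{1,\dots,n\}$ with $|A|\geq2$ and $\b$ over the functions assigning to each position outside $A$ an element of $\mc{B}\setminus\{w\}$. I would show this sum is direct. Each $E_*^{Top}(A,\b)$ is spanned by canonical (Bezrukavnikov) monomials $\mu=\prod_{i\in A\setminus J_*}p_i^*(w)\,p^*(\b)\,G_{I_*J_*}$ with $I_*\cup J_*\subseteq A$; writing $W(\mu)$ for the set of positions at which such a $\mu$ has mark $w$, one has $W(\mu)=A\setminus J_*$, since positions in $J_*$ carry $1$ and positions outside $A$ carry entries of $\b$, all different from $w$. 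Hence $A=J_*\cup W(\mu)$ is determined by $\mu$, and so is $\b$ (the tuple of marks of $\mu$ at the positions outside $A$). Thus no canonical monomial lies in two different summands, and, canonical monomials being linearly independent, the sum $\sum_{(A,\b)}E_*^{Top}(A,\b)$ is direct. Each $E_*^{Top}(A,\b)$ being a subcomplex (Proposition \ref{prop5.3}), the differential $d$ respects the decomposition, so $(E_*^*(w(X,n)),d)=\bigoplus_{(A,\b)}(E_*^{Top}(A,\b),d)$ is a direct sum of complexes, each acyclic by Proposition \ref{prop5.3}; a direct sum of acyclic complexes is acyclic, which finishes the proof.

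The only step that is more than bookkeeping — and the place where the argument could go wrong if one is careless — is the directness: a priori a monomial with several $w$-marked positions might belong to $E_*^{Top}(A,\b)$ for several distinct $A$, and the key observation is that recording the $w$-positions together with the index set $J_*$ of the exterior generators pins $A$ down uniquely. Everything else is assembled from the preceding two propositions.
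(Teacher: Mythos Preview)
Your proof is correct and follows essentially the same route as the paper: the paper also reduces everything to showing that the sum $\sum_{(A,\b)}E_*^{Top}(A,\b)$ is direct, and recovers $A$ from a canonical monomial by exactly the same formula $A=J_*\cup\{\text{positions carrying }w\}$ (and then $\b$ as the remaining marks). Your write-up is somewhat more explicit about why $\mc{S}_n$-invariance and the subcomplex property are automatic, but the key idea is identical.
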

\begin{proof}
It is enough to show that the double sum is a direct sum: a monomial $x_1\o \ldots \o x_n G_{I_*J_*}$ from the canonical basis in $E_*^*(w(X,n))$ defines in a unique way the subset $A$ and the factor $\b$:
\[\begin{array}{cl}
    A & =\{i\in\{1,2,\ldots,n\}\mid x_i=w\}\cup J_*, \\
    \b & =(x_{h_1},x_{h_2},\ldots,x_{h_b}),
  \end{array}
\]
where $h_1<h_2<\ldots<h_b$ are the elements of $\{1,2,\ldots,n\}\setminus A$.
\end{proof}
\begin{prop}\label{prop5.8}
The projection map
$$ E_*^*(X,n)\longrightarrow SE_*^*(X,n)=E_*^*(X,n)/E_*^*(w(X,n))$$
is a quasi-isomorphism.
\end{prop}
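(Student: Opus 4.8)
The plan is to obtain this as an immediate consequence of Proposition \ref{prop5.7} via the long exact cohomology sequence. Since $E_*^*(w(X,n))$ is by construction a subcomplex of the Kri\v{z} model and is $\mc{S}_n$-stable, the short exact sequence of bigraded chain complexes of $\mc{S}_n$-modules
$$0\longrightarrow E_*^*(w)\longrightarrow E_*^*(X,n)\longrightarrow SE_*^*(X,n)\longrightarrow 0$$
is well defined, all three differentials being the restriction, resp. the quotient, of the differential $d$ of the Kri\v{z} model, and all three maps being $\mc{S}_n$-equivariant and compatible with the bigrading.

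First I would pass to the associated long exact sequence in cohomology; respecting the bigrading, a typical stretch of it reads
$$\cdots\to H_q^k(E_*^*(w))\to H_q^k(E_*^*(X,n))\to H_q^k(SE_*^*(X,n))\to H_{q-1}^{k+1}(E_*^*(w))\to\cdots$$
By Proposition \ref{prop5.7} the subcomplex $E_*^*(w)$ is acyclic, so $H_q^k(E_*^*(w))=0$ for all $k,q$; hence the two terms flanking $H_q^k(E_*^*(X,n))\to H_q^k(SE_*^*(X,n))$ in this sequence vanish, and this arrow --- which is precisely the map induced by the projection --- is an isomorphism in every bidegree. This proves that the projection is a quasi-isomorphism. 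Moreover, since every map in the short exact sequence is $\mc{S}_n$-equivariant, so are the connecting homomorphisms, and therefore the isomorphism $H_*^*(E_*^*(X,n))\cong H_*^*(SE_*^*(X,n))$ is an isomorphism of $\mc{S}_n$-modules, as announced at the start of the section.

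There is no genuine obstacle here: the entire content sits in Proposition \ref{prop5.7} --- namely that $E_*^*(w(X,n))$ really is an acyclic $\mc{S}_n$-subcomplex --- and once that is granted the statement is pure homological algebra. If one prefers an argument avoiding the long exact sequence, one can instead try to assemble the contracting homotopies of Propositions \ref{prop5.2} and \ref{prop5.3} (each built from $\gamma\mapsto G_{12}\gamma$ on the relevant copy of the Arnold algebra) into an $\mc{S}_n$-equivariant contraction of $E_*^*(w(X,n))$ --- averaging over the $\mc{S}_n$-orbit of the summands $E_*^{Top}(A,\b')$, which is harmless since we work over $\mathbb{Q}$ --- and extend it by zero on a chosen linear complement to exhibit an explicit $\mc{S}_n$-equivariant chain homotopy equivalence between $E_*^*(X,n)$ and $SE_*^*(X,n)$; but the long exact sequence route is shorter and cleaner.
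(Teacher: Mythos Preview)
Your proof is correct and follows exactly the same route as the paper: the paper's proof consists of the single remark that the claim is ``obvious from the long exact sequence'' associated to the short exact sequence $0\to E_*^*(w(X,n))\to E_*^*(X,n)\to SE_*^*(X,n)\to 0$. Your write-up simply spells this out (and your closing remark about assembling explicit homotopies is an optional extra the paper does not pursue).
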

\begin{proof}
This is obvious from the long exact sequence associated to
$$ 0\ra E_*^*(w(X,n))\longrightarrow E_*^*(X,n)\longrightarrow SE_*^*(X,n)\ra 0.$$
\end{proof}

\section{An example: $F(\mathbb{C}P^1,n)$}

We analyze the cohomology algebra of the configuration space of the complex projective line using the symmetric structure of the Kri\v{z} model.
We encode the symmetric structure of a bigraded $\mc{S}_n$-module $H_*^*$ into the $\mc{S}_n-$\emph{Poincar\'{e} polynomial}: $$SP_{H_*^*}(t,s)=\mathop{\sum}\limits_{\l\vdash n}(\mathop{\sum}\limits_{k,q}m_{q,\l}^kt^ks^q)V(\l),$$
where $m_{q,\l}^k$ is the multiplicity of the irreducible representation $V(\l)$ in the component $H_q^k$; the double Poincar\'{e} polynomial of $H_*^*$ is a consequence of $SP_{H_*^*}$: $P_{H_*^*}(t,s)=\mathop{\sum}\limits_{k,q}(\mathop{\sum}\limits_{\l\vdash n}m_{q,\l}^k\dim V(\l))t^ks^q.$

For $n=2$ and $n=3$ we have the next tables of the symmetric group structure of the Kri\v{z} model; using the injectivity properties of the differential, we obtain the first table and for the second table we have to use the vanishing of the cohomology on the left, top and the right side and also the acyclicity of the ``interior part" $\mathop{\oplus}\limits_{|A|=2}E_*^{Top}(A,1):$
\[
\begin{array}{rll}
    V(3)\oplus V(2,1)\cong &\lan w\o1\o1G_{12},w\o1\o1G_{13}, 1\o w\o1 G_{23}\ran & \mathop{\ra}\limits_{\cong}^{d}   \\
      \mathop{\ra}\limits_{\cong}^{d} & \lan w\o w\o1,w\o1\o w, 1\o w\o w\ran : &
  \end{array}
\]

\begin{picture}(50,60)
\put(100,0){  \put(-90,20){$E_*^*(\mathbb{C}P^1,2):$}     \put(0,1){\vector(1,0){150}}             \put(1,0){\vector(0,1){50}}
\put(155,-5){$k$}         \put(-7,45){$q$}                \multiput(0,0)(0,30){2}{\multiput(0,0)(30,0){5}{$\centerdot$}}
\multiput(-1,-1)(60,0){3}{$\bullet$}                      \multiput(29,29)(60,0){2}{$\bullet$}
\begin{scriptsize}
\multiput(-4,-8)(60,0){3}{$V(2)$}  \put(53,-14){$V(1,1)$} \multiput(34,30)(60,0){2}{$V(2)$}
\end{scriptsize}
\multiput(35,27.5)(60,0){2}{\vector(1,-1){25}}            \multiput(35,27.5)(60,0){2}{\vector(1,-1){1}}  }
\end{picture}

\begin{picture}(220,130)
\put(10,50){$E_*^*(\mathbb{C}P^1,3):$}                    \put(90,20){  \put(0,1){\vector(1,0){200}}  \put(205,-5){$k$}
\put(1,0){\vector(0,1){80}}    \put(-7,75){$q$}           \multiput(0,0)(0,30){3}{\multiput(0,0)(30,0){6}{$\centerdot$}}
\multiput(-1,-1)(60,0){4}{$\bullet$}                      \multiput(29,29)(60,0){3}{$\bullet$}
\multiput(59,59)(60,0){2}{$\bullet$}
\begin{scriptsize}
\multiput(-4,-8)(60,0){4}{$V(3)$}                         \multiput(53,-14)(60,0){2}{$V(2,1)$}     \multiput(34,35)(120,0){2}{$V(3)$}
\multiput(64,60)(60,0){2}{$V(2,1)$}                       \multiput(35,28)(120,0){2}{$V(2,1)$}     \put(94,35){$2V(3)$}
\put(95,28){$2V(2,1)$}
\end{scriptsize}
\multiput(34,29)(60,0){3}{\vector(1,-1){25}}              \multiput(59,4)(60,0){3}{\vector(1,-1){2}}
\put(34,29){\vector(1,-1){2}}                             \multiput(65,57)(60,0){2}{\vector(1,-1){25}}
\multiput(65,57)(60,0){2}{\vector(1,-1){1}} }
\end{picture}

\noindent As a consequence we obtain
\begin{lem}
The non zero components of the cohomology algebra of $F(\mathbb{C}P^1,2)$ and $F(\mathbb{C}P^1,3)$ are
$$H^0_0(F(\mathbb{C}P^1,2))\cong V(2),\,\,\,\,\,\,H_0^2(F(\mathbb{C}P^1,2))\cong V(1,1),$$
$$H^0_0(F(\mathbb{C}P^1,3))\cong V(3),\,\,\,\,\,\,H_1^3(F(\mathbb{C}P^1,3))\cong V(3).$$
In particular their symmetric Poincar\'{e} polynomials are
$$P_{F(\mathbb{C}P^1,2)}(s,t)=V(2)+t^2V(1,1),$$
$$P_{F(\mathbb{C}P^1,3)}(s,t)=(1+st^3)V(3).$$
\end{lem}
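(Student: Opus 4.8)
The plan is to read the cohomology directly off the two $\mc{S}_n$-equivariant tables of $E_*^*(\mathbb{C}P^1,n)$ displayed above, using that for $X=\mathbb{C}P^1$ one has $m=1$ and $\beta_0=\beta_2=1$, so $E_q^k$ vanishes unless $k\equiv q\pmod 2$ and only finitely many small pieces occur.

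For $n=2$ the only possibly nontrivial differentials are $d\colon E_1^1\to E_0^2$ and $d\colon E_1^3\to E_0^4$; both lie on the top line $q=n-1=1$, hence are injective by Proposition~\ref{prop5} (or by the explicit $d(G_{12})=p_1^*(w)+p_2^*(w)$). Since $E_1^1\cong E_1^3\cong V(2)$, $E_0^2\cong V(2)\oplus V(1,1)$ and $E_0^4\cong V(2)$, Schur's lemma forces $d(E_1^1)$ to be the trivial isotypic summand of $E_0^2$ and $d\colon E_1^3\to E_0^4$ to be an isomorphism. Hence $H_0^0\cong V(2)$, $H_0^2\cong V(1,1)$, all other components vanish, and $P_{F(\mathbb{C}P^1,2)}(s,t)=V(2)+t^2V(1,1)$.

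For $n=3$ I would first record, from Section~\ref{section3}, the $\mc{S}_3$-structure $E_0^0\cong V(3)$, $E_0^2\cong E_0^4\cong V(3)\oplus V(2,1)$, $E_0^6\cong V(3)$, $E_1^1\cong E_1^5\cong V(3)\oplus V(2,1)$, $E_1^3\cong 2V(3)\oplus 2V(2,1)$, $E_2^2\cong E_2^4\cong V(2,1)$, and then argue: (i) $d\colon E_1^1\to E_0^2$ is injective by Remark~\ref{rem4}, hence an isomorphism of $V(3)\oplus V(2,1)$ onto itself, so $H_1^1=H_0^2=0$; (ii) $d\colon E_2^2\to E_1^3$ and $d\colon E_2^4\to E_1^5$ are injective by Proposition~\ref{prop5}, so $H_2^2=H_2^4=0$; (iii) the top subcomplex $E_*^{Top}(\mathbb{C}P^1,3)\cong(\mc{A}^*(3),\partial)$ is acyclic and occupies exactly the bidegrees $(q,k)=(2,4),(1,5),(0,6)$, so $\ker(d|_{E_1^5})=d(E_2^4)$ and $d\colon E_1^5\to E_0^6$ is onto, giving $H_1^5=H_0^6=0$; (iv) the interior acyclic subcomplex $\bigoplus_{|A|=2}E_*^{Top}(A,(1))$ occupies all of $(q,k)=(0,4)$ and a $3$-dimensional part of $(1,3)$, mapping isomorphically between them, so $d\colon E_1^3\to E_0^4$ is onto ($H_0^4=0$) and $\ker(d|_{E_1^3})$ is $3$-dimensional, necessarily $\cong V(3)\oplus V(2,1)$; (v) since $d^2=0$, the image $d(E_2^2)\cong V(2,1)$ lies in the unique $V(2,1)$-summand of $\ker(d|_{E_1^3})$, whence $H_1^3=\ker(d|_{E_1^3})/d(E_2^2)\cong V(3)$; (vi) $H_0^0=E_0^0\cong V(3)$. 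Collecting, the only nonzero components are $H_0^0\cong V(3)$ and $H_1^3\cong V(3)$, so $P_{F(\mathbb{C}P^1,3)}(s,t)=(1+st^3)V(3)$. An equivalent route replaces $E_*^*$ by the quasi-isomorphic quotient $SE_*^*(\mathbb{C}P^1,3)=E_*^*/E_*^*(w)$ of Proposition~\ref{prop5.8}, which is $12$-dimensional --- the monomials in which $w$ never sits at an interior vertex of a tree --- and whose cohomology is computed by inspection.

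The delicate step is (v): identifying which copy of $V(2,1)$ inside the six-dimensional $E_1^3$ survives and checking that it is precisely the image of $E_2^2$. This is forced once we know both the decomposition $E_1^3\cong 2V(3)\oplus 2V(2,1)$ and the surjectivity of $d\colon E_1^3\to E_0^4$ coming from the interior acyclic subcomplex; alternatively it reduces to a short finite linear-algebra computation of $d$ on the canonical basis, equivalently of $H^*(SE_*^*(\mathbb{C}P^1,3))$.
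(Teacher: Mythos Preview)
Your proof is correct and follows essentially the same route as the paper. The paper's argument for $n=3$ is compressed into one sentence---``use the vanishing of the cohomology on the left, top and the right side and also the acyclicity of the interior part $\bigoplus_{|A|=2}E_*^{Top}(A,1)$''---together with the displayed isomorphism $V(3)\oplus V(2,1)\cong\langle w\o1\o1\,G_{12},\dots\rangle\xrightarrow{d}\langle w\o w\o1,\dots\rangle$, which is exactly your step~(iv); your steps (i)--(vi) just unpack this in order, and your alternative via $SE_*^*(\mathbb{C}P^1,3)$ is the quotient formulation of the same calculation.
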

\begin{cor}
The Poincar\'{e} polynomials of the unordered configuration spaces of the projective line are
\[\begin{array}{l}
    P_{C(\mathbb{C}P^1,2)(t)}=1 \\
    P_{C(\mathbb{C}P^1,3)(t)}=1+t^3.
  \end{array}
\]
\end{cor}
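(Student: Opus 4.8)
The plan is to deduce $P_{C(\mathbb{C}P^1,n)}$ for $n=2,3$ directly from the $\mc{S}_n$-module structure of $H_*^*(F(\mathbb{C}P^1,n))$ already obtained in the preceding Lemma. First I would observe that $\mc{S}_n$ acts freely on $F(\mathbb{C}P^1,n)$ — a nontrivial permutation cannot fix a point with pairwise distinct coordinates — so the projection $F(\mathbb{C}P^1,n)\to C(\mathbb{C}P^1,n)=F(\mathbb{C}P^1,n)\diagup\mc{S}_n$ is a finite covering. The transfer argument for a free action of a finite group (equivalently, averaging cochains over $\mc{S}_n$) then yields a rational isomorphism
$$H^*(C(\mathbb{C}P^1,n);\mathbb{Q})\cong H^*(F(\mathbb{C}P^1,n);\mathbb{Q})^{\mc{S}_n}.$$
Since the identification of $H^*(F(\mathbb{C}P^1,n);\mathbb{Q})$ with $H^*(E(\mathbb{C}P^1,n),d)$ is $\mc{S}_n$-equivariant, the dimension of $H^k(C(\mathbb{C}P^1,n);\mathbb{Q})$ equals $\sum_q m_{q,(n)}^k$, the total multiplicity of the trivial representation $V(n)$ in bidegree $(k,q)$ as $q$ ranges; equivalently, $P_{C(\mathbb{C}P^1,n)}(t)$ is obtained from the symmetric Poincar\'e polynomial $SP_{F(\mathbb{C}P^1,n)}(t,s)$ by the substitutions $V(n)\mapsto 1$, $V(\l)\mapsto 0$ for $\l\neq(n)$, and $s\mapsto 1$.

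Next I would just read off the answer. For $n=2$ the Lemma gives $SP_{F(\mathbb{C}P^1,2)}=V(2)+t^2V(1,1)$, whose trivial-representation part is the single term $V(2)$ in bidegree $(0,0)$; hence $P_{C(\mathbb{C}P^1,2)}(t)=1$. For $n=3$ it gives $SP_{F(\mathbb{C}P^1,3)}=(1+st^3)V(3)$, so the multiplicity of the trivial representation $V(3)$ is $1+st^3$, and setting $s=1$ yields $P_{C(\mathbb{C}P^1,3)}(t)=1+t^3$.

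There is no real obstacle here: the analytic content — the injectivity statements of Section~\ref{section4} and the acyclicity of the interior subcomplexes $\bigoplus_{|A|=2}E_*^{Top}(A,1)$ from Section~\ref{5}, which together pin down $H_*^*(F(\mathbb{C}P^1,n))$ for $n=2,3$ — has already been carried out in proving the Lemma. The only points that merit a word of justification are the freeness of the $\mc{S}_n$-action (immediate, since the coordinates are pairwise distinct) and the $\mc{S}_n$-equivariance of the Križ model as a model for $F(\mathbb{C}P^1,n)$, which is built into the Fulton--MacPherson/Križ construction recalled in the Introduction; one should also remark that the passage from the bivariate to the univariate Poincar\'e polynomial is the harmless specialization $s=1$, because the upper degree $k$ of the Križ model is the genuine cohomological degree of the configuration space.
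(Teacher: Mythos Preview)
Your proposal is correct and is exactly the intended argument: the paper states the corollary without proof, leaving implicit the standard passage $H^*(C(\mathbb{C}P^1,n);\mathbb{Q})\cong H^*(F(\mathbb{C}P^1,n);\mathbb{Q})^{\mc{S}_n}$ and the reading off of the $V(n)$-multiplicities from the preceding Lemma. You have simply spelled out those two steps.
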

Another consequence of the last computation is the fact that the Serre spectral sequences of the fibrations
$$\mc{F}_n: F(\mathbb{C},n-1)\hookrightarrow F(\mathbb{C}P^1,n)\ra \mathbb{C}P^1$$
do not degenerate at $\bold{E}_2^{*,*}$ (for $n\geq 3$): using the vanishing of the first and second cohomology of
$F(\mathbb{C}P^1,3)$ and the projection $p:F(\mathbb{C}P^1,n)\ra F(\mathbb{C}P^1,3)$, we obtain the diagram (we use
bold $\bold{E}_*^{*,*}$ for the components in the spectral sequences):

\mbox{} \mbox{} \mbox{} \mbox{} \mbox{} \mbox{} \begin{picture}(60,80)
\put(0,60){$\bold{E}_2^{0,1}(\mc{F}_3)=\mathbb{Q}\lan G_{12}\ran$}
\put(130,60){$\bold{E}_2^{0,1}(\mc{F}_n)\cong \mc{A}^1(n-1)$}
\put(90,10){$\bold{E}_2^{2,0}(\mc{F}_3)\cong \mathbb{Q}\lan w\ran$}
\put(210,10){$\bold{E}_2^{2,0}(\mc{F}_n)\cong \mathbb{Q}\lan w\ran$}
\multiput(165,13)(-80,50){2}{\vector(1,0){40}}
\multiput(60,56)(130,0){2}{\vector(2,-1){65}}
\multiput(95,40)(130,0){2}{$d_2$}
\put(75,35){$\cong$}
\multiput(185,17)(-80,50){2}{$p^*$}
\put(185,5){$\cong$}
\end{picture}

\noindent and we find that the differential $d_2$ is surjective for $n\geq3$. These spectral sequences degenerate at $E_3$: the two non zero columns are given by $$\bold{E}_{\infty}^{0,*}=\bold{E}_3^{0,*}={\ker} d_2 \cong \mc{A}^*(n-1)\diagup (G_{12}), $$
$$\bold{E}_{\infty}^{2,*}=\bold{E}_3^{2,*}\cong \bold{E}_2^{2,*}\diagup {\rm{Im}}\, d_2 \cong  \mathbb{Q}\lan w G_{12}\ran\otimes \mc{A}^*(n-1)\diagup (G_{12}). $$
\begin{prop}\cite{FZ}
The cohomology algebra of the configuration space $F(\mathbb{C}P^1,n)$ ($n\geq 3$) is given by $$H^*(F(\mathbb{C}P^1,n))\cong H^*(F(\mathbb{C}P^1,3)) \otimes \mc{A}^*(n-1)\diagup (G_{12}). $$
In particular, its Poincar\'{e} polynomial is $$P_{F(\mathbb{C}P^1,n)}(t)=(1+t^3)(1+2t)(1+3t)\ldots (1+(n-2)t).$$
\end{prop}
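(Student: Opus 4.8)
The plan is to read both assertions off the Serre spectral sequence of the fibration $\mc{F}_n\colon F(\mathbb{C},n-1)\hookrightarrow F(\mathbb{C}P^1,n)\ra\mathbb{C}P^1$, whose $\mathbf{E}_\infty$-page was just determined: it lives in the two columns $p=0,\,p=2$, with $\mathbf{E}_\infty^{0,*}\cong\mc{A}^*(n-1)\diagup(G_{12})$ and $\mathbf{E}_\infty^{2,*}\cong\mathbb{Q}\lan wG_{12}\ran\o\mc{A}^*(n-1)\diagup(G_{12})$. First I would extract the additive statement. Because only these two columns survive, the decreasing filtration of $H^*(F(\mathbb{C}P^1,n))$ has a single jump, so in each total degree there is a short exact sequence $0\ra\mathbf{E}_\infty^{2,k-2}\ra H^k\ra\mathbf{E}_\infty^{0,k}\ra0$, which splits over $\mathbb{Q}$; assembling over $k$ gives $H^*(F(\mathbb{C}P^1,n))\cong\bigl(\mathbb{Q}\lan1\ran\oplus\mathbb{Q}\lan wG_{12}\ran\bigr)\o\mc{A}^*(n-1)\diagup(G_{12})$ as graded vector spaces, and by the Lemma the first tensor factor, concentrated in degrees $0$ and $3$, is precisely $H^*(F(\mathbb{C}P^1,3))$.

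Next I would promote this to a ring isomorphism. The Serre spectral sequence is multiplicative, so $\mathbf{E}_\infty^{*,*}$ is the associated graded ring of $H^*(F(\mathbb{C}P^1,n))$ for the above filtration, and an inspection of the computation shows it is, as a bigraded algebra, $H^*(F(\mathbb{C}P^1,3))\o\mc{A}^*(n-1)\diagup(G_{12})$: here $\mathbf{E}_\infty^{0,*}$ is the subalgebra $\ker\partial\subset\mc{A}^*(n-1)$, which the Arnold homotopy $h=G_{12}\cdot$ identifies with the quotient algebra $\mc{A}^*(n-1)\diagup(G_{12})$ (one has $\ker\partial\cap(G_{12})=0$, and $\gamma\mapsto\gamma-G_{12}\partial\gamma$ is a section of the quotient map), while $\mathbf{E}_\infty^{2,*}$ is free of rank one over it on the class $[wG_{12}]$, whose square lies in the vanishing group $\mathbf{E}_\infty^{4,*}$. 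To realise this over $H^*$ itself I would bring in the forgetful map $p\colon F(\mathbb{C}P^1,n)\ra F(\mathbb{C}P^1,3)$ dropping the last $n-3$ points; it is a morphism of fibrations over $\mathbb{C}P^1$, and the induced ring map $p^*\colon H^*(F(\mathbb{C}P^1,3))\ra H^*(F(\mathbb{C}P^1,n))$ sends the degree $3$ generator to a class $c$ in the bottom filtration step $F^2$ with image $[wG_{12}]$ in $\mathbf{E}_\infty^{2,1}$; hence $p^*$ is injective. Since the spectral sequence dies past column $2$ one has $F^3=0$, so $F^2$ is a square-zero ideal, $c\cdot F^2=0$, and multiplication by $c$ induces an isomorphism $H^*\diagup F^2\cong\mathbf{E}_\infty^{0,*}$ onto $F^2$ (on associated graded it is $\bar a\mapsto[wG_{12}]\bar a$). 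Consequently the map $H^*(F(\mathbb{C}P^1,3))\o\mc{A}^*(n-1)\diagup(G_{12})\ra H^*(F(\mathbb{C}P^1,n))$ built from $p^*$ on the first factor and a multiplicative lift of $\ker\partial$ on the second is a ring homomorphism, and it is bijective by the additive count of the previous paragraph.

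The hard part is exactly that last ingredient: producing a multiplicative lift of $\ker\partial$, i.e. showing the two-step filtration splits as algebras and not merely additively. One route uses the restriction $i^*\colon H^*(F(\mathbb{C}P^1,n))\ra H^*(F(\mathbb{C},n-1))=\mc{A}^*(n-1)$ to the fiber, a ring map onto $\mathbf{E}_\infty^{0,*}=\ker\partial$: since $F^2$ is square-zero, a vector-space right inverse of $i^*$ on $\ker\partial$ can be corrected to a ring-theoretic one once the relevant obstruction is shown to vanish, which comes down to checking that the degree $1$ classes $[G_{ij}]-[G_{kl}]\in H^1$ already satisfy the Arnold relations among their products (forced here for degree reasons). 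A cleaner alternative that bypasses the extension problem altogether is to use the simply transitive action of $PGL_2(\mathbb{C})$ on ordered triples of points of $\mathbb{C}P^1$: it trivialises the bundle $F(\mathbb{C}P^1,n)\ra F(\mathbb{C}P^1,3)$, so $F(\mathbb{C}P^1,n)\cong F(\mathbb{C}P^1,3)\times F(\mathbb{C}\setminus\{0,1\},n-3)$ and the ring isomorphism follows by K\"{u}nneth, once one identifies $H^*(F(\mathbb{C}\setminus\{0,1\},n-3))$ with $\mc{A}^*(n-1)\diagup(G_{12})$ via the affine normalisation $F(\mathbb{C},n-1)\cong\mathbb{C}\times\mathbb{C}^*\times F(\mathbb{C}\setminus\{0,1\},n-3)$. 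Finally the Poincar\'{e} polynomial drops out by a dimension count: from $\mc{A}^*(n-1)=\ker\partial\oplus(G_{12})$ with $(G_{12})$ isomorphic to $\mc{A}^*(n-1)\diagup(G_{12})$ shifted by one in degree, one gets $P_{\mc{A}^*(n-1)}(t)=(1+t)\,P_{\mc{A}^*(n-1)\diagup(G_{12})}(t)$, so $P_{\mc{A}^*(n-1)\diagup(G_{12})}(t)=\bigl(\prod_{k=1}^{n-2}(1+kt)\bigr)\diagup(1+t)=\prod_{k=2}^{n-2}(1+kt)$, and multiplying by $P_{H^*(F(\mathbb{C}P^1,3))}(t)=1+t^3$ gives the stated product.
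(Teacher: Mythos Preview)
Your approach is essentially the paper's: the text preceding the proposition computes the $\mathbf{E}_\infty$-page of the Serre spectral sequence of $\mc{F}_n$ exactly as you describe, and then simply states the proposition with a citation to \cite{FZ} rather than giving a self-contained proof. So at the level of strategy you are aligned with the paper.

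Where you go further is in actually addressing what the paper leaves to the reference: the multiplicative extension problem and the Poincar\'{e} polynomial. Your spectral-sequence route to the ring statement is honest about the difficulty (producing a multiplicative section of $i^*$), and your $PGL_2(\mathbb{C})$ alternative is the clean way to resolve it --- indeed this trivialisation of $F(\mathbb{C}P^1,n)\to F(\mathbb{C}P^1,3)$ is the standard argument and is closer in spirit to what \cite{FZ} actually does. Your identification $H^*(F(\mathbb{C}\setminus\{0,1\},n-3))\cong\mc{A}^*(n-1)/(G_{12})$ via the affine normalisation and your derivation of the Poincar\'{e} polynomial from $P_{\mc{A}^*(n-1)}(t)=(1+t)\,P_{\mc{A}^*(n-1)/(G_{12})}(t)$ are both correct and are details the paper does not spell out. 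In short: same starting point, but you supply the endgame that the paper outsources.
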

Using the results from section 5, we detect the nonzero bigraded components of the cohomology algebra and (partially) its $\mc{S}_n$-structure.
\begin{proof}(\textit{of the Theorem \ref{thm6}})
The first cohomology group is $$H^1(F(\mathbb{C}P^1,n))=H_1^1\cong V(n-2,2),$$ and the subalgebra generated by degree 1 elements is contained in $\mathop{\bigoplus}\limits_{q=0}^{n-3}H_q^q.$ The element
$$\gamma =2(n-2)\mathop{\sum}\limits_{i<j}p^*_i(w)G_{ij}-\mathop{\sum}\limits_{i<j}\mathop{\sum}\limits_{k\neq i,j}p^*_k(w)G_{ij}\in E_1^3(\mathbb{C}P^1,n)$$
is a cocycle in the $V(n)$-isotypic component. It can not be a coboundary because $V(n)$ is missing from $E_2^2:$
\[
\begin{array}{rcl}
  E_2^2(\mathbb{C}P^1,n) & \cong & \mc{A}^2(n)\cong \\
                         & \cong & 2V(1)_n\oplus 2V(2)_n\oplus2V(1,1)_n\oplus V(3)_n\oplus 2V(2,1)_n\oplus V(3,1)_n
\end{array}
\]
(this is correct in the stable case $n\geq 7$, see \cite{CF} or \cite{AAB};  the trivial module $V(n)$ does not appear in the unstable cases either).

As $\b_3=1+\mathop{\sum}\limits_{2\leq i<j<k\leq n-2}ijk$ and the component $H_3^3$ contains a submodule of dimension $\b_3-1$, we obtain
$$H_1^3(F(\mathbb{C}P^1,n))\cong V(n).$$
The ideal generated by $\gamma$ is contained in $\mathop{\bigoplus}\limits_{q=1}^{n-2}H_q^{q+2}$ and algebra structure shows that all the
other bigraded components are zero.
\end{proof}

The module $H_2^2$ is a quotient of $E_2^2\cong \mc{A}^2(n)$ (its decomposition into irreducible modules was given in the last proof)
and also a quotient of
$$\begin{array}{ccl}
   H_1^1\wedge H_1^1 &\cong & \bigwedge^2V(2)_n \\
                     &\cong & V(1,1)_n\oplus V(2,1)_n\oplus V(1,1,1)_n\oplus V(3,1)_n
\end{array}$$
(see \cite{AAB}); the intersection of these decompositions gives (for $n\geq 7$) the inclusion
$$H_2^2<V(1,1)_n\oplus V(2,1)_n\oplus V(3,1)_n $$
and computing their dimensions this inclusion becomes an equality:
$$\begin{array}{cll}
    \b_2 & = & \mathop{\sum}\limits_{2\leq i<j \leq n-2}ij  =  \dfrac{(n-4)(n-3)(3n^2-n+2)}{24} \\
     & = & \dfrac{(n-1)(n-2)}{2}+\dfrac{n(n-1)(n-4)}{3}+\dfrac{4(n-1)(n-3)(n-6)}{8}     \\
     & = & {\rm dim}V(1,1)_n+{\rm dim}V(2,1)_n+{\rm dim}V(3,1)_n.
  \end{array}
$$
Similar computations give the unstable cases of the next proposition.
\begin{prop}
The decomposition of the second cohomology group becomes stable for $n\geq 7$ and it is given by
$$H^2(F(\mathbb{C}P^1,n))=H_2^2\cong V(1,1)_n\oplus V(2,1)_n\oplus V(3,1)_n.$$
In the unstable cases we have
$$\begin{array}{cclr}
                                             H^2(F(\mathbb{C}P^1,n))  & =     & 0        & {\rm for} \,\, n=2,3,4; \\
                                             H^2(F(\mathbb{C}P^1,5))  & \cong & V(3,1,1) &  \\
                                             H^2(F(\mathbb{C}P^1,6))  & \cong & V(4,1,1)\oplus V(3,2,1). &
                                           \end{array}
$$

\end{prop}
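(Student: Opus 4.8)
The plan is to reduce the computation to the single bihomogeneous piece $H_2^2$ and then pin down its $\mc{S}_n$-type by squeezing it between two modules whose decompositions are already available.

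First I would invoke Theorem \ref{thm6}: for $n\ge 4$ the only nonzero bigraded component of total degree $2$ is $H_2^2$, so $H^2(F(\mathbb{C}P^1,n))=H_2^2$. Next I would exhibit two surjections onto $H_2^2$. Since $E_3^1(\mathbb{C}P^1,n)=0$ (for $\mathbb{C}P^1$ the trapezoid contains nothing below the diagonal $k=q$), the group $H_2^2$ is an $\mc{S}_n$-submodule of $E_2^2(\mathbb{C}P^1,n)\cong\mc{A}^2(n)$, hence by complete reducibility also a quotient of it; its irreducible decomposition $2V(1)_n\oplus 2V(2)_n\oplus 2V(1,1)_n\oplus V(3)_n\oplus 2V(2,1)_n\oplus V(3,1)_n$ (valid for $n\ge 7$) is the one recorded in the proof of Theorem \ref{thm6}. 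On the other hand, the algebra structure established in that same proof shows that $H^2$ is spanned by cup products of degree-one classes, so the alternating cup product $\bigwedge^2 H^1=\bigwedge^2 V(n-2,2)$ surjects onto $H_2^2$; the decomposition $\bigwedge^2 V(n-2,2)\cong V(1,1)_n\oplus V(2,1)_n\oplus V(1,1,1)_n\oplus V(3,1)_n$ is taken from \cite{AAB}.

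The key step is then purely combinatorial. An irreducible $V(\mu)_n$ can appear in $H_2^2$ only if it appears in both of the modules above, and then with multiplicity at most the minimum of the two; intersecting the two constituent lists leaves exactly $V(1,1)_n$, $V(2,1)_n$, $V(3,1)_n$, each with multiplicity $\le 1$, so $H_2^2\hookrightarrow V(1,1)_n\oplus V(2,1)_n\oplus V(3,1)_n$. To turn this inclusion into an equality I would compute $\dim H_2^2=\beta_2$, where $\beta_2$ is the coefficient of $t^2s^2$ in the product formula of Theorem \ref{thm6}, that is $\beta_2=\sum_{2\le i<j\le n-2}ij=\tfrac{(n-4)(n-3)(3n^2-n+2)}{24}$, and then check by the hook-length formula that $\beta_2=\dim V(1,1)_n+\dim V(2,1)_n+\dim V(3,1)_n$. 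Since the target has no room to spare, the inclusion is an isomorphism; one also verifies that the relevant decompositions stabilize at $n=7$.

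For the unstable values $n=2,3,4,5,6$ I would run the same argument with the unstable decompositions of $\mc{A}^2(n)$ and $\bigwedge^2 V(n-2,2)$ in place of the stable ones, or more directly compute $\dim H_2^2$ as the coefficient of $t^2s^2$ in the explicit Poincar\'{e} polynomial of Theorem \ref{thm6} ($0$ for $n\le 4$, $6$ for $n=5$, $26$ for $n=6$) and match it against $V(3,1,1)$, respectively $V(4,1,1)\oplus V(3,2,1)$, whose dimensions are $6$ and $10+16=26$. The main obstacle is not conceptual but bookkeeping: the whole argument rests on the two imported decompositions being correct in the claimed ranges, so that the intersection of their constituent lists is precisely $\{V(1,1)_n,V(2,1)_n,V(3,1)_n\}$ with unit multiplicities --- after that, the single dimension identity closes the gap with no slack.
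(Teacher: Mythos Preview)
Your proposal is correct and follows essentially the same route as the paper: squeeze $H_2^2$ between a quotient of $\mc{A}^2(n)$ and a quotient of $\bigwedge^2 V(2)_n$, intersect the two constituent lists, and close the gap by the dimension identity $\beta_2=\dim V(1,1)_n+\dim V(2,1)_n+\dim V(3,1)_n$. One small caveat: for the unstable cases your ``or more directly'' alternative of merely matching $\dim H_2^2$ against the asserted decomposition is only a consistency check, not a proof on its own (dimension $6$ for $n=5$ does not single out $V(3,1,1)$); you still need the intersection argument with the unstable decompositions, exactly as the paper's ``similar computations'' indicates.
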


\end{document}